\documentclass{article}
\usepackage{graphicx} 
\usepackage{amsfonts,amsthm, amsmath,amssymb,latexsym,psfrag,natbib}
\usepackage{eucal}
\usepackage{dsfont}
\usepackage{svg}
\usepackage{amssymb,epsfig,latexsym}
\usepackage[colorlinks=true, urlcolor=blue,linkcolor=blue, citecolor=blue]{hyperref}
\usepackage{algorithm}
\usepackage{algorithmic}
\usepackage{physics}
\usepackage{subcaption}
\usepackage[a4paper,
            bindingoffset=0.2in,
            left=1in,
            right=1in,
            top=1in,
            bottom=1in,
            footskip=.25in]{geometry}

\usepackage{cancel}

\newcommand{\R}{{\mathbb R}}

\newcommand{\N}{{\mathbb N}}

\newcommand{\Z}{{\mathbb Z}}

\newcommand{\PP}{{\mathbb P}}
\newcommand{\E}{{\mathbb E}}
\newcommand{\Pb}{{\mathbf P}}
\newcommand{\Eb}{{\mathbf E}}

\newcommand{\balambda}{{\bar \lambda}}

\newcommand{\btheta}{{\boldsymbol \theta}}

\newcommand{\bmu}{{\boldsymbol \mu}}

\newtheorem{Theorem}{Theorem}[section]
\newtheorem{Proposition}[Theorem]{Proposition}

\newtheorem{Lemma}[Theorem]{Lemma}
\newtheorem{Remark}[Theorem]{Remark}

\newtheorem{Assumption}[Theorem]{Assumption}

\usepackage[textwidth=1.8cm, textsize=scriptsize]{todonotes}

\title{Poisson Autoregression on a Large Network with a Stochastic Block Model Structure}
\author{Mahmoud Khabou$^1$, Joanna Marks$^1$, Joshua Corneck$^1$ and Edward A. K. Cohen$^1$}
\date{%
    $^1$Department of Mathematics, Imperial College London, South Kensington Campus, London, SW7 2AZ, U.K.\\%
}

\begin{document}

\maketitle
\begin{abstract}
  We consider multivariate Poisson autoregressive processes evolving on random networks generated by a Stochastic Block Model. This framework extends existing network Poisson autoregressions by incorporating latent community structure and allowing for non-Markovian dependence, both excitatory and inhibitory. We establish a mean-field approximation and prove that the finite system converges to its limit at the optimal rate $N^{-1/2}$ in the quenched setting with overwhelming probability, both on average and in uniform norm. Our analysis yields quantitative error bounds that are explicit in the time horizon and, under suitable stability conditions, remain polynomial or uniform in time, thereby avoiding the exponential growth typically associated with Grönwall-type arguments. As a consequence, the mean-field approximation is shown to hold uniformly over any time horizon. We further demonstrate how the approximation can be exploited to reduce the computational complexity of parametric inference from $\mathcal{O}(N^2T^2)$ to $\mathcal{O}(NT^2)$. The results provide a theoretical foundation for the analysis and inference of count-valued processes on large random networks with community structure.
\end{abstract}

\section{Introduction}
We consider observing a count series, that is, a time series taking non-negative integer values, on a system equipped with a network structure. Such series arise naturally in a wide range of applications, including spike trains in neuronal networks \citep{bio1,bio2}, the number of cases during an epidemic \citep{hung2025bayesian,kaur2024dynamic,khabou2025markov,count_med}, social network interactions \citep{social_1, social_2, group_network}, and measures of financial activity \citep{heinen2007multivariate,finance}. These diverse applications have motivated a growing body of research on stochastic processes evolving over networked systems, with particular emphasis on capturing both temporal dependence and interaction structures between nodes.

In continuous time, a substantial literature has developed around Hawkes processes and their behaviour on large networks. Early work by \cite{delattre_hawkes_2016} provides a first study of Hawkes processes on a deterministic fully connected graph, establishing both a law of large numbers and a central limit theorem, with the characteristic mean-field convergence rate of $N^{-1/2}$, where $N$ is the number of nodes. This result was subsequently generalised in \cite{pfaffelhuber_mean-field_2022}, where edges carry random labels determining whether interactions are excitatory or inhibitory, although no explicit convergence rate is obtained. Extensions to more general random graph settings, including inhomogeneous or sparse structures, are considered in \cite{agathe-nerine_hawkes}, where the same $N^{-1/2}$ rate is recovered in the annealed setting, averaging over graph realisations. Additional developments include mean-field approximations for spatially extended Hawkes processes in the Markovian case \citep{chevallier_duarte}, as well as variants incorporating age dependence \citep{chevallier-age_hawkes}, diffusive regimes \citep{erny}, and multiplicative inhibition \citep{duval_interacting_2021}. 

Despite this breadth of results in continuous time, comparatively less attention has been devoted to discrete-time counterparts.
Poisson autoregressive models, which may be viewed as discrete-time analogues of Hawkes processes \citep{coutin2024functionalapproximationmarkedhawkes}, have been studied to a much lesser extent, despite their particular relevance in light of the widespread availability of integer-valued data. For a general overview of multivariate count time series, including Markov Poisson autoregressions within the class of observation-driven models, we refer to \cite{fokianos_multi}, while foundational aspects of such time series are discussed in \cite{doukhan}. When multivariate data are associated with an underlying network structure, network Poisson autoregressions become especially pertinent. In this direction, \cite{AF1} and \cite{AF2} investigate Markov Poisson autoregressions defined on a known deterministic graph, establishing stability properties and consistency of the quasi-maximum likelihood estimator, including in regimes where the number of nodes grows large.

A common limitation of the models described above is their assumption of a fixed, deterministic graph structure. In many applications, however, the network itself exhibits latent community structure: nodes belong to unobserved groups, and edge formation depends on these group memberships. Depending on the context, such groups may reflect underlying spatial organization, socioeconomic or demographic similarity, or functional roles played by the nodes. These forms of latent structure have been documented in a range of domains, including neuronal networks \citep{sbm_brain}, transport networks \citep{corneck25a} and political networks \citep{sbm_politics}. The Stochastic Block Model (SBM) provides a natural framework for capturing this type of heterogeneity, by specifying edge probabilities through latent block assignments. This, in turn, motivates the study of stochastic processes evolving on random graphs with community structure.

In this paper, we consider a multivariate Poisson autoregression on a random graph generated by an SBM. Such a framework can represent, for instance, the occurrence times of neuronal action potentials in a network with excitatory and inhibitory interactions \citep{Biology1,bio1,bio2}. In contrast to existing network Poisson autoregressive models \citep{AF1,AF2}, our framework allows for random interactions induced by the SBM and is not restricted to the Markovian setting, thereby accommodating longer memory effects.

The contribution of this paper is fourfold. First, we establish a mean-field approximation for Poisson autoregressions on an SBM graph. Second, we prove that the canonical $N^{-1/2}$ convergence rate holds in the quenched setting with overwhelming probability, both on average over nodes and in the uniform norm. Third, we obtain explicit bounds in the time horizon that are polynomial or even uniform in $T$, thereby significantly improving upon the classical $\mathrm e^{T}$ bounds derived via Grönwall-type arguments. As a consequence, the mean-field approximation is shown to hold uniformly for all times $t \in \mathbb{N}$ under suitable stability conditions. Finally, we demonstrate how this approximation can be leveraged to develop a simplified approach to parametric inference that reduces the typical $\mathcal{O}(N^2T^2)$ complexity to $\mathcal{O}(NT^2)$.

The remainder of the paper is organized as follows. General definitions regarding matrices and Poisson processes are provided in Section \ref{sec:def_general}. Section \ref{sec: def_sbm} introduces the Stochastic Block Model and states a key approximation result. The Poisson autoregressive model is defined in Section \ref{sec: def_pois}, where stability properties are also discussed. In Section \ref{sec:heuristics}, we provide heuristic insights into the behaviour of the system as the number of nodes grows large. Rigorous convergence results are presented in Section \ref{sec:convergence-N}, with convergence in average treated in Section \ref{sec:cvg_euclidean} and uniform convergence in Section \ref{sec:cvg_uniform}. Finally, Section \ref{sec:analogy} establishes analogous results for Hawkes processes on SBM graphs. Technical lemmas are collected in Appendix \ref{App:lemmas}, while proofs are deferred to Appendix \ref{App:proofs}.

\section{Definitions and preliminary results}
\subsection{General definitions and notations}
\label{sec:def_general}

Let $N\in \N^*$ be the total number of nodes in a given network, \textit{e.g.} the number of neurons in a part of the brain, the number of users on a social network or the number of financial agents. For a vector $x \in \R^N$, $|x|$ denotes $\left(|x^1|,\cdots,|x^N|\right)$. We say that $x$ is non-negative if $x=|x|$. For two non-negative vectors $x$ and $y$, we write $x \preceq y$ if $x^i \leq y^i$ for all $i\in \{1,\cdots,N\}$. The same definitions can be naturally extended to matrices of the same shape.

We denote by $\|\cdot\|_2$ and $\|\cdot\|_\infty$ the Euclidean and uniform norms respectively, that is, $\|x\|_2=(\sum_{i=1}^N |x^i|^2)^{1/2}$ and $\|x\|_\infty=\max_{i\in \{1,\cdots,N\}}|x^i|$.
 We also denote by $\|\cdot\|_2$ the matrix norm induced by the Euclidean norm, that is 
 $\|A\|_2=\sup_{x\neq 0}(\|Ax\|_2/\|x\|_2).$ This norm is also referred to as the spectral norm and it is known that $\|A\|_2=[\rho(A^*A)]^{1/2}$, where $A^*$ is the adjoint of $A$ and $\rho$ denotes the spectral radius, that is the largest absolute eigenvalue. Note that if $A$ is real symmetric, then $\|A\|_2=\rho(A)$.
 Similarly, $\|\cdot\|_\infty$ denotes the matrix norm induced by the uniform norm, that is, $\|A\|_\infty=\sup_{x\neq 0}(\|Ax\|_\infty/\|x\|_\infty).$ This norm can also be computed using the explicit formula $\|A\|_\infty=\max_{i \in \{1,\cdots,N\}}\sum_{j=1}^N |A^{ij}|$. 
 
 In this article, we will also use the following "linking" norm $$\|A\|_{2 \to \infty}=\sup_{x \neq 0}\frac{\|Ax\|_\infty}{\|x\|_2},$$ called the \textit{two-to-infinity} norm. This norm satisfies the inequality $\|A\|_{2\to \infty} \leq \|A\|_2$ \citep{Cape}.
 
 We denote the identity matrix of size $N$ by $I^N$ and $\delta^N$ the matrix indicator sequence such that $\delta^N_t=I^N$ if $t=0$ and $\delta^N_t=0$ otherwise. We recall that the convolution $*$ of two matrix sequences of compatible sizes is defined as
$(A*B)_t=\sum_{k=0}^t A_{t-k}B_k.$
Since the convolution is associative, one can recursively define $A^{(*m)}=A*A^{(*(m-1))}$ for $m \in \N ^*$ and $A^{*0}= \delta ^N$.

On a probability space $(\tilde \Omega, \mathcal F_\Pi, \PP)$, let $\Pi=(\Pi^{i}_t(\cdot))_{(i,t)\in \N^* \times \N}$ be an independent and identically distributed (\textit{iid}) family of unit-rate Poisson processes. $\Pi$ plays the role of the underlying driving randomness for the network count process, which will be defined in Section \ref{sec: def_pois}. The filtration associated with $(\Pi^{i}_t(\cdot))_{(i,t)\in \N^* \times \N}$ is denoted by $\mathcal F=(\mathcal F_t)_{t\in \N }$. Given a $\lambda \in \R_+$, $\Pi^i_t(\lambda)$ counts the number of points of $\Pi^i_t$ which are in the interval $[0,\lambda]$. It is clear that $\Pi^i_t(\lambda)$ follows a Poisson distribution of parameter $\lambda$, or $\mathcal P(\lambda)$ for short. If $\lambda=\left(\lambda^i\right)_{i \in \N^*}$ is a non-negative sequence, $\Pi_t(\lambda):=\left(\Pi^i_t(\lambda^i)\right)_{(i,t)\in \N^* \times \N}$ has the multivariate Poisson distribution $\mathcal P(\lambda)= \left(\mathcal P(\lambda^i)\right)_{i \in \N^*}$ where the $\mathcal P(\lambda^i)$ are independent Poisson variables.  

Throughout this paper, $\psi$ denotes a non-decreasing Lipschitz function that takes non-negative values. We set $L=\sup_{x\neq y}(|\psi(y)-\psi(x)|/|y-x|)$. This function will be referred to as the \textit{jump rate}.

The set of (scalar, vector or matrix, depending on the context) real summable sequences is denoted by $\ell_1(\N)$. For $\phi \in \ell_1(\N)$, we denote its sum by $\mathcal S(\phi)=\sum_{t=0}^{+\infty} \phi_t.$
The use of the $\|\cdot\|_1$ norm for sequences will be avoided in this paper to avoid any confusion with matrix norms. We add that if we are considering dynamics over a finite time horizon $\{0,\cdots,T\}$ instead of $\N$, it is sufficient to replace $\phi$ with $\phi \mathds 1_{\cdot \le T}$.

\subsection{The stochastic block model (SBM) graph}
\label{sec: def_sbm}
The stochastic block model (SBM) graph is defined as a graph of $N$ nodes,
 each of which belongs to a single class $C(i)$ and that there are $q\in \N^*$ different classes. For a given class $\ell \in \{1,\cdots,q\}$ we call $N_\ell$ the number of its agents. It is clear that $\sum_{\ell=1}^q N_\ell = N$. We suppose that the fraction of agents in the class $\ell$ remains constant, that is $\alpha_\ell :=(N_\ell/N) \in (0,1)$. We assume that each agent $i$ has an implicit position $x_i=(i/N)\in [0,1]$ which determines its class:
$$C(i)=\min \left\{\ell \in\{1,\cdots,q\} \text{ such that }x_i=\frac{i}{N}\leq \alpha_\ell \right\}.$$

On a probability space, $(\Omega,\mathcal F_U,\Pb)$, let $(U_{i,j})_{(i,j) \in (\N^*)^2}$ be a family of independent and identically distributed \textit{iid} random variables following the $\mathcal U([0,1])$  distribution. We denote the expected value with respect to $\Pb$ by $\Eb$. Given a family $(p_{\ell \to h})_{(\ell, h) \in \{1,\cdots,q\}^2}$ of numbers in $[0,1]$, we build a graph $\mathcal G = (\{1,\cdots,N\},(\xi_{j\to i })_{(i,j)\in \{1,\cdots,N\} ^2})$ with the random edges 
$$\xi^{j\to i}=\mathds 1_{U_{i,j}\leq p^{C(j) \to C(i)}}.$$
This means that a member of the class $\ell$ forms a directed edge with a member of class $h$ with probability $p^{\ell\to h}$, and does not form an edge with probability $1-p_{\ell\to h}$. Note that in general the graph is not necessarily symmetric, not even in distribution. The adjacency matrix of $\mathcal G$ is denoted by $\Xi =\left( \xi^{j\to i}\right)_{(i,j)\in \{1,\cdots,N\}^2}$. Its expected value $$\Eb \left[\Xi\right] =\left( p^{C(j)\to C(i)}\right)_{(i,j)\in \{1,\cdots,N\}^2}$$ is clearly a block matrix and can be factorized as we will see later. 

We now state a crucial result from the random matrix theory which says that as the graph size $N$ grows, the random adjacency matrix $\Xi$ does not deviate too far away from its expected value. 
\begin{Lemma}
    \label{lmm:vershynin}
    Let $N\in \N^*$. There exists an absolute constant $C$ that does not depend on $N$ such that the event $E^N \subset \Omega$ defined by 

    $$E^N= \left \{\left \|\Xi-\Eb \left[\Xi\right] \right \|_2 \leq C \sqrt N \right \}$$
     has a probability $\Pb$ larger than $1-2\mathrm e^{-N}$. 
\end{Lemma}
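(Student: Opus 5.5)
The matrix $W:=\Xi-\Eb[\Xi]$ has independent, centred entries $W^{ij}=\xi^{j\to i}-p^{C(j)\to C(i)}$, each bounded in absolute value by $1$, since the $U_{i,j}$ are iid. The diagonal is included; self-loops cause no harm. The matrix is not symmetric, so I will use the standard bound for the operator norm of a non-symmetric matrix with independent bounded entries, via an $\varepsilon$-net argument as in Vershynin's \emph{High-Dimensional Probability} (Theorem 4.4.5). The plan has three steps: a net discretisation of $\|W\|_2$, a fixed-vector concentration bound, and a union bound.

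\textbf{Discretisation.} Let $\mathcal N$ be a $1/4$-net of the unit sphere $S^{N-1}$, with $|\mathcal N|\le 9^N$. The usual approximation argument gives
$$\|W\|_2=\sup_{x,y\in S^{N-1}} \langle Wx,y\rangle \le 2\max_{x,y\in\mathcal N}\langle Wx,y\rangle .$$
This holds because $\langle Wx,y\rangle-\langle Wx_0,y_0\rangle\le \tfrac12\|W\|_2$ when $x_0,y_0$ are the net points within $1/4$ of $x,y$.

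\textbf{Fixed vectors.} For fixed unit vectors $x,y$, we have $\langle Wx,y\rangle=\sum_{i,j} W^{ij}y^ix^j$. This is a sum of independent centred terms, and each term lies in an interval of length $2|y^ix^j|$. Hoeffding's inequality therefore gives
$$\Pb\bigl(\langle Wx,y\rangle\ge s\bigr)\le \exp\Bigl(-\frac{2s^2}{\sum_{i,j}4(y^i)^2(x^j)^2}\Bigr)=\mathrm e^{-s^2/2}.$$

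\textbf{Union bound.} Take $s=\tfrac{C}{2}\sqrt N$ and sum over the at most $81^N$ pairs in $\mathcal N\times\mathcal N$:
$$\Pb\bigl(\|W\|_2> C\sqrt N\bigr)\le 81^N \mathrm e^{-C^2N/8}=\exp\bigl(N(\log 81 - C^2/8)\bigr).$$
Choose $C$ with $C^2/8\ge \log 81+1$, for example $C=7$. The bound is then at most $\mathrm e^{-N}\le 2\mathrm e^{-N}$. The constant $C$ is absolute: it depends neither on $N$ nor on $q$, the $p^{\ell\to h}$, or the $\alpha_\ell$, because only boundedness and independence of the entries were used.

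The only step that needs care is the net comparison, together with checking that the exponent constant beats the net cardinality. That comparison is what fixes the size of $C$, and it is routine.
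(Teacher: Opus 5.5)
Your proof is correct and is essentially the paper's. The paper simply invokes Vershynin's Theorem 4.4.5 with $t=\sqrt N$, using that the centred Bernoulli entries have uniformly bounded sub-Gaussian norm, and you have written out that theorem's own $1/4$-net, concentration and union-bound proof for entries bounded by $1$; doing it with Hoeffding gives an explicit absolute constant ($C=7$), the slightly stronger probability $1-\mathrm e^{-N}$, and makes clear that $C$ depends on neither $q$ nor the $p^{\ell\to h}$.
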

\begin{proof}
    This Lemma is a direct application of Theorem 4.4.5 in \cite{vershynin}. Indeed, the entries of our matrix are independent and of zero mean. Moreover, since we have in total $q \times q$ centered Bernoulli distributions, the maximum Sub-Gaussian norm $\max_{(i,j) \in \{1,\cdots,N\}^2}\|\left(\Xi-\Eb \left[\Xi\right]\right)^{ij}\|_{\psi^2}$ is bounded by a quantity that does not depend on $N$.
\end{proof}

\subsection{Poisson Autoregression on an SBM graph}
\label{sec: def_pois}
We now fix one realisation $\mathcal G=\left(\{1,\cdots,N\}, \Xi\right)$ of the aforementioned SBM; a graph of $N$ nodes of which each node $i$ belongs to a single class $C(i)\in \{1,\cdots,q\}$. 

Let $Y^N=(Y^{i,N}_t)_{(i,t)\in \{1,\cdots,N\}\times \N}$ be a count time series, i.e. a time series that takes only non-negative integer values. For example, $Y^N_t$ can count the number of spikes in a neuronal network, the number of posts made by users of a social media platform, or the number of orders placed by a financial agent in the time interval $(t-1,t]$. We say that $Y^N$ is a (nonlinear) Poisson autoregression on $\mathcal G$ if, for any $t\ge 1$, $Y_t$ follows a multivariate Poisson distribution, that is, 
\begin{equation*}
    Y^{i,N}_t \big | Y^N_0,\cdots,Y^N_{t-1} \sim \mathcal P (\lambda^{i,N}_t), \quad \text{for }i \in \{1,\cdots,N\}
\end{equation*}
where $\lambda^N=(\lambda^{i,N}_t)_{(i,t)\in \{1,\cdots,N\}\times \N}$ is the intensity time series defined by 
\begin{equation*}
    \lambda ^{i,N}_t=\psi \left(\mu^{C(i)}+\frac{1}{N} \sum_{j=1}^N \xi^{j\to i} \sum_{k=0}^{t-1} \phi ^{C(j)\to C(i)}_{t-k}Y^{j,N}_k\right), \quad \text{for }i \in \{1,\cdots,N\}
\end{equation*}
with the initial values $\lambda^{i,N}_0=\psi \left( \mu_{C(i)}\right)$ and $Y_0^{i,N} \sim \mathcal P (\lambda^{i,N}_0)$, for $i \in \{1,\cdots,N\}$.

Here, $\phi^{\ell \to h}$ is an $\ell_1(\N)$ kernel that encodes the impact that a node $j$ of class $\ell$ has on node $i$ of class $h$, provided that the edge $\xi_{j\to i}$ exists. For instance, if $\phi^{\ell \to h}$ is positive then the observation of non-zero counts $(Y^{j,N}_k)_{k=1,\cdots,t}$ in the past increases $\lambda^{i,N}_{t+1}$ which in turn means that a non-zero count on the node $i$ at time $t+1$ is more likely to occur. In this circumstance, we say that there is cross excitation from class $\ell$ to class $h$. Conversely, if $\phi^{\ell \to h}$ is negative, the realization of non-zero counts in nodes $j$ of class $\ell$ decreases $\lambda^{i,N}_{t+1}$ and therefore the likelihood of observing a non-zero counts in node $i$ at time $t+1$ decreases. In this case, we say that there is cross inhibition from class $\ell$ to class $h$.

The vector $\mu =\left( \mu^1,\cdots, \mu ^q\right)$ is referred to as the baseline pre-intensity. The higher a value $\mu ^h$ is, the more likely it is to observe spontaneous counts of any node that belong to class $h$, that is, counts that are not provoked by the past of that node or the past of nodes which are connected to it. 
\begin{Remark}
    If the kernel $\phi$ as well as the baseline pre-intensities $\mu$ are non-negative, then the jump rate can be chosen $\psi(x)=x$. This particular choice yields the linear Poisson autoregression, which does not allow for inhibition. However, it allows us to have explicit recursive formulae for $\E \left[Y_t\right]$ as well as to perform the likelihood approximation studied in Section \ref{sec:application}.
\end{Remark}

In this article, we construct $Y^N$ by thinning from the Poisson process family $\Pi$ defined in Section \ref{sec:def_general}. The family $\Pi$ should be seen as the source of Poissonian randomness that drives $Y^N$, and will be used as the same driving randomness for the large network limit. The Poisson autoregression $Y^N$ and its intensity $\lambda ^N$ can thus be constructed in matrix form following the recursion
\begin{equation}
    \label{eq: def_pois_auto}
    \begin{cases}
        Y^N_t &=\Pi_t\left(\lambda ^N_t\right)\\
        \lambda ^N_t &=\psi \left(\mu ^N+\sum_{k=0}^{t-1}\Phi ^N_{t-k}Y^N_k\right)\\
        &=\psi \left(\mu ^N+\sum_{k=0}^{t-1}\Phi ^N_{t-k}\Pi_k \left(\lambda^N_k\right)\right),
    \end{cases}
\end{equation}
for $t \in \N$. The effect of the network is now compactly encoded in the random matrix kernel 
\begin{equation}
    \label{def: Phi}
    \left(\Phi^{ij,N}_t\right)_{(i,j)=\{1,\cdots,N\}} := \left(\frac{1}{N}\xi^{j \to i }\phi^{C(j)\to C(i)}_t\right)_{(i,j)\in\{1,\cdots,N\}^2}.
\end{equation}
The pre-intensity vector is defined as $\mu ^N =(\mu^{C(i)})_{i \in \{1,\cdots, N\}}$.

Before proceeding to the study of the large $N$ limit, we comment on the stability of this autoregression. In fact, due to the presence of self and cross excitations, we can see an explosive behaviour in $Y^N$ and $\lambda ^N$. It was shown in \cite{khabou2025markov} that if 
\begin{equation}
\label{ineq: assumption}
    L \rho \left( \mathcal S \left (\left |\Phi^N \right|\right)\right)<1
\end{equation}
holds true\footnote{In the present setting there is randomness that comes from the graph $\mathcal G$. This becomes deterministic once it is sampled.}, then $(Y^N, \lambda ^N)$ converges towards a unique stationary and ergodic time series $(\tilde Y^N, \tilde  \lambda ^N)$ which solves the equation 
\begin{equation*}
    \begin{cases}
        \tilde Y^N_t &= \tilde \Pi _t(\tilde \lambda ^N_t)\\
        \tilde \lambda ^N_t &=\psi \left(\mu ^N + \sum_{k=-\infty}^{t-1} \Phi ^N_{t-k} \tilde Y^N_k \right),
    \end{cases}
\end{equation*}
for $t \in \Z$, where $\tilde \Pi=\left(\tilde \Pi _t\right)_{t \in \Z}$ is a family of \textit{iid} unit-rate Poisson processes such that $\tilde \Pi _t = \Pi _t$ for $t \in \N$. It is possible to show in this case that $$\E \left[Y^N_t\right]=\E \left[\lambda^N_t\right] \preceq \left(I^N-L\mathcal S\left(\left | \Phi^N\right |\right)\right)^{-1} \psi \left(\mu ^N\right),\quad \text{for }t \in \N,$$
where the matrix $ I^N-L \mathcal S \left (\left |\Phi^N \right|\right)$ is invertible if \eqref{ineq: assumption} holds.

We now define a discrete analog of the Poisson process with a deterministic intensity. We will then prove this process to be the limit of the process \eqref{eq: def_pois_auto} as $N$ goes grows large. 
\subsection{Heuristics and discrete Poisson processes}
\label{sec:heuristics}
As we hinted in the previous sections, we suspect the Poisson autoregression approaches a Poisson process with a deterministic intensity. The heuristics are the following: first, for a given node $i$ in the class $h$, we write its intensity while grouping the nodes that belong to the same class together
\begin{align*}
\lambda^{i,N}_t=& \psi \left(\mu ^{C(i)}+\frac{1}{N} \sum_{j=1}^N \xi^{j\to i} \sum_{k=0}^{t-1} \phi ^{C(j)\to C(i)}_{t-k}Y^{j,N}_k\right)\\
=&\psi \left(\mu ^h+\sum_{\ell=1}^q\frac{\alpha _\ell}{N_\ell}\sum_{j:C(j)=\ell}\xi^{j \to i } \sum_{k=0}^{t-1} \phi^{\ell \to h }_{t-k}Y^{j,N}_k\right)
\end{align*}
for all $i$ in class $h$, where $\alpha_\ell = N_\ell/N$. Rearranging the sums gives
$$\lambda^{i,N}_t=\psi \left(\mu ^h+\sum_{\ell=1}^q \alpha _\ell \sum_{k=0}^{t-1} \phi^{\ell \to h }_{t-k} \frac{1}{N_\ell}\sum_{j:C(j)=\ell}\xi^{j \to i }Y^{j,N}_k\right).$$
For all the nodes $j$ that belong to the same class $\ell$, we expect all the counts $Y^{j,N}_k = \Pi _k (\lambda^{j,N}_k)$ have a similar (conditional) Poisson distribution which entails $$\frac{1}{N_\ell}\sum_{j:C(j)=\ell} Y^{j,N}_k \simeq \frac{1}{N_\ell}\sum_{j:C(j)=\ell} \lambda^{j,N}_k,$$ because the expected value of a Poisson distribution is its intensity. Similarly, the law of large numbers suggests that the intensities will ``average out'' to a deterministic quantity whose characteristics depend only on the parameters of the common class $\ell$: $$ \frac{1}{N_\ell}\sum_{j:C(j)=\ell} \lambda^{j,N}_k  \simeq \bar \lambda_k^\ell.$$
Therefore, taking into account the independence of the two sources of randomness one would expect that 
$$\frac{1}{N_\ell}\sum_{j:C(j)=\ell} \xi^{j \to i}Y^{j,N}_k \simeq p^{\ell \to h } \bar \lambda ^\ell _k.$$
Hence, the stochastic intensity $\lambda ^{i,N}_t$ is likely to converge towards a deterministic intensity that depends only on node $i$'s class, more specifically: 
$$
    \lambda^{i,N}_t
    \simeq \psi \left(\mu ^h+\sum_{\ell=1}^q \alpha _\ell p^{\ell \to h } \sum_{k=0}^{t-1} \phi^{\ell \to h }_{t-k} \bar \lambda ^\ell _k \right)
    \simeq  \bar \lambda ^h _t.
$$
This prompts us to define the following discrete Poisson process.

First, let $\bar \lambda=(\bar\lambda^h _t )_{(h ,t)\in \{1,\cdots,q\} \times \N}$ be the deterministic regression defined by 
\begin{equation}
    \label{def: lambda_bar}
    \bar \lambda ^h _t = \psi \left(\mu ^h+ \sum_{\ell=1}^q \alpha _\ell p^{\ell \to h } \sum_{k=0}^{t-1} \phi^{\ell \to h }_{t-k} \bar \lambda ^\ell _k\right),
\end{equation}
which is then cast into the $N$-dimensional vector-valued sequence $\bar\lambda^N:=(\bar\lambda^{i,N}_t)_{(i,t)\in\{1,...,N\}\times \N}$, where $\bar \lambda ^{i,N}_t:=\bar \lambda ^{C(i)}_t$ for $i\in \{1,\cdots,N\}$ and $t\in \N$. 

We note that it is trivial to show that  $\bar \lambda^{N}$ solves 
\begin{equation}
    \label{eq:bar_broadcast}
    \bar \lambda^{i,N}_t= \psi \left(\mu_{C(i)}+\sum_{\ell=1}^q \alpha_\ell \frac{1}{N_\ell}\sum_{j: C(j)=\ell}  p^{\ell \to C(i)} \sum_{k=1}^{t-1} \phi ^{\ell\to C(i)}_{t-k}\bar{\lambda}^{j,N}_k\right),
\end{equation}
because $\bar{\lambda}^{j,N}_k$ are equal for nodes $j$ in the same class. The discrete Poisson process is then defined by thinning from the same driving randomness as the Poisson autoregression by 
\begin{equation}
    \label{def: Y_bar}
    \bar Y^{i,N}_t:= \Pi^{i}_t\left(\bar \lambda^{i,N}_t\right)\quad \text{for } i\in \{1,\cdots,q\} \quad \text{and }t\in \N.
\end{equation}
If we manage to prove that $\lambda^{i,N}_t \simeq\bar \lambda^{i,N}_t = \bar \lambda^{C(i)}_t$, we would expect that $\bar Y^{i,N}_t\simeq Y^{i,N}_t$. 

Before we prove the convergence of the Poisson autoregression defined by Equation \eqref{eq: def_pois_auto} towards the discrete Poisson process defined by Equations \eqref{def: lambda_bar} and \eqref{def: Y_bar} as the number of nodes in the SBM graph grows, we study the behaviour of the limiting object in time. 

\begin{Assumption}
    \label{ass:stability_MF}
    If $\bar M \in \mathcal M_q(\R_+)$ denotes the matrix 
    $$\bar M_{h\ell}=L\alpha_\ell p_{\ell \to h} \mathcal S \left(\left |\phi^{\ell \to h} \right |\right),$$
    then $\bar M$ has a spectral radius strictly less than one ($\rho(\bar M)<1$).
\end{Assumption}
This assumption is a contraction-type argument that proves that the deterministic intensity defined in \eqref{def: lambda_bar} converges as $t$ goes to infinity to some finite constant. This means that after some burn-out period, $\bar Y^N$ can be seen as a homogeneous discrete time Poisson process. 
\begin{Proposition}
\label{prop:cvg_bar_k}
    If Assumption \ref{ass:stability_MF} holds, then the equation
    $$\lambda_\infty^h=\psi \left(\mu_h + \sum_{\ell=1}^q \alpha_\ell p_{\ell \to h} \mathcal S\left(\phi^{\ell \to h}\right) \lambda_\infty^\ell \right ),\quad h\in \{1,\cdots,q\}$$
    has a unique solution in $ \R_+^q$. Moreover, the sequence $(\bar\lambda_t)_{t\in \N}$ defined recursively in Equation \eqref{def: lambda_bar} converges to $\lambda_\infty$ as $t\to +\infty$. 
\end{Proposition}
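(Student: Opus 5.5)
The plan is to treat the fixed-point equation as a contraction in a suitably weighted sup-norm on $\R^q$, and then to show convergence of $\bar\lambda_t$ by a Volterra-type comparison argument that exploits the summability of the kernels.

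\textbf{Existence and uniqueness.} Define $F:\R_+^q\to\R_+^q$ by
$$F(x)^h=\psi\Big(\mu_h+\sum_{\ell=1}^q \alpha_\ell p_{\ell\to h}\mathcal S(\phi^{\ell\to h})x^\ell\Big).$$
Since $\psi$ is $L$-Lipschitz, $|F(x)-F(y)|\preceq \bar M|x-y|$ componentwise, using $|\mathcal S(\phi)|\le\mathcal S(|\phi|)$. Because $\rho(\bar M)<1$ and $\bar M\ge 0$, there is a positive vector $w$ with $\bar M w\preceq r w$ for some $r<1$; this follows from Perron--Frobenius applied to $\bar M+\varepsilon \mathbf 1\mathbf 1^\top$, which is irreducible with spectral radius $<1$ for small $\varepsilon$. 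In the weighted norm $\|x\|_w=\max_h |x^h|/w^h$, the map $F$ is then an $r$-contraction on the complete set $\R_+^q$ (closed, and invariant since $\psi\ge0$). Banach's fixed point theorem gives a unique solution $\lambda_\infty$. Uniqueness holds even among all $\R_+^q$ solutions, because any two solutions satisfy $|x-y|\preceq \bar M^n|x-y|\to0$.

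\textbf{Boundedness of $\bar\lambda$.} Set $m_t=\max_{s\le t}\bar\lambda_s$ componentwise. From \eqref{def: lambda_bar} and $\psi(x)\le\psi(0)+L|x|$,
$$\bar\lambda_t\preceq c+\bar M m_{t-1},\qquad c^h=\psi(0)+L|\mu_h|.$$
Hence $m_t\preceq c+\bar M m_t$, which gives $m_t\preceq (I-\bar M)^{-1}c$. So $B:=\sup_t\|\bar\lambda_t\|_\infty<\infty$.

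\textbf{Convergence.} Let $e_t=|\bar\lambda_t-\lambda_\infty|$. Writing $\lambda_\infty$ with the full sums $\sum_{k\ge1}\phi_k$ and subtracting,
$$e_t^h\le L\sum_\ell \alpha_\ell p_{\ell\to h}\Big(\sum_{k=0}^{t-1}|\phi^{\ell\to h}_{t-k}|\,e^\ell_k+\lambda_\infty^\ell\sum_{s\ge t+1}|\phi^{\ell\to h}_s|\Big).$$
Define $\limsup$ componentwise, $\bar e=\limsup_t e_t$, which is finite by boundedness. For fixed $K$, split the convolution into lags $\le K$ and lags $>K$. The tail of lags $>K$ is bounded by $L\sum_s\! \sum_{>K}|\phi_s|(B+\|\lambda_\infty\|)$, which vanishes as $K\to\infty$. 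The inhomogeneous remainder tends to $0$ as $t\to\infty$ because the kernels are summable. Taking $\limsup_t$ then $K\to\infty$ gives $\bar e\preceq \bar M\bar e$. Iterating yields $\bar e\preceq\bar M^n\bar e\to0$, so $\bar e=0$ and $\bar\lambda_t\to\lambda_\infty$.

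The main obstacle is this last step: justifying the $\limsup$ passage through the convolution. It needs the uniform bound from the boundedness step and a careful splitting of the kernel tail. If that route gets delicate, I would fall back on the weighted-norm version: show $\sup_{k\ge t-K}\|e_k\|_w$ contracts by $r$ up to an $o(1)$ error.
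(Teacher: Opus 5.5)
Your proof is correct. For existence and uniqueness you follow the paper's idea: the Lipschitz bound $|F(x)-F(y)|\preceq \bar M|x-y|$, then a contraction with constant close to $\rho(\bar M)$, then Banach. Your Perron--Frobenius weighted sup-norm $\|\cdot\|_w$ is a useful sharpening. Passing from a componentwise bound to a norm bound requires a monotone norm. A generic norm with $\|\bar M\|\le\rho(\bar M)+\varepsilon$, as invoked in the paper, need not be monotone, whereas $\|\cdot\|_w$ is by construction. Your direct uniqueness argument via $\bar M^n\to0$ is also cleaner.

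For convergence the routes genuinely differ.
\begin{itemize}
\item The paper writes the same inequality for $e_t$ and feeds it to its comparison result, Lemma~\ref{lmm:convolution}, with $A_k^{h\ell}=L\alpha_\ell p_{\ell\to h}|\phi^{\ell\to h}_k|$, so that $\mathcal S(A)=\bar M$. This gives $e_t\preceq(B*\tau)_t$, where $B=\sum_{m\ge0}A^{(*m)}\in\ell_1(\N)$ and $\tau$ is the (scaled) kernel tail. Convergence then follows because an $\ell_1$ sequence convolved with a null sequence is null, and no a priori bound on $\bar\lambda$ is needed.
\item You avoid the resolvent altogether. You first prove $\bar\lambda_t\preceq(I-\bar M)^{-1}c$, then run a truncation/$\limsup$ argument to get $\bar e\preceq\bar M\bar e$.
\end{itemize}

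What each buys: your version is elementary and self-contained. It also gives an explicit uniform bound on $\bar\lambda$ as a by-product; the paper instead extracts boundedness from convergence when it needs it in the proof of Proposition~\ref{prop:intensity_uniform}. The paper's version is shorter, given a lemma it uses anyway. It is also quantitative: $(B*\tau)_t$ turns any decay rate of the kernel tails into a rate for $\bar\lambda_t\to\lambda_\infty$, while the $\limsup$ argument only gives qualitative convergence.

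The step you flag as the main obstacle is already complete as written. With your uniform bound:
\begin{itemize}
\item the $\limsup$ of the finitely many short-lag terms is at most $\sum_{j\le K}|\phi_j|\,\bar e$;
\item the long-lag terms are at most $\sup_k e_k$ times $\sum_{j>K}|\phi_j|$;
\item the inhomogeneous term vanishes.
\end{itemize}
So the weighted-norm fallback is unnecessary.
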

\begin{proof}
    The proof of this Proposition can be found in Section \ref{proof: cvg_bar_k}.
\end{proof}
\section{Convergence of the SBM Poisson autoregression as $N$ grows to infinity}\label{sec:convergence-N}
Recall that our model has two sources of randomness: (i) the adjacency graph is itself random following the distribution $\Pb$; (ii) the Poisson autoregression on a realisation of the aforementioned graph is driven by the \textit{iid} Poisson processes of distribution $\PP$. We therefore propose an intermediary network process between the SBM Poisson autoregression defined by Equations \eqref{eq: def_pois_auto} which has a doubly stochastic intensity from one side, and the discrete Poisson process \eqref{def: Y_bar} of purely deterministic intensity \eqref{def: lambda_bar}. 

This intermediary is denoted by $\hat \lambda ^N = (\hat \lambda^{i,N}_t)_{(i,t) \in \{0,\cdots,N\}\times \N}$ on the quenched graph and is recursively defined by
\begin{align}
    \hat \lambda^{i,N}_{t+1}&:=\psi \left(\mu_{C(i)}+\frac{1}{N} \sum_{j=1}^N \xi_{j\to i} \sum_{k=1}^t \phi ^{C(j)\to C(i)}_{t+1-k}\hat{\lambda}^{j,N}_k\right)\label{def:lambda_hat} \\
    &=\psi \left(\mu_{C(i)}+\sum_{\ell=1}^q \alpha_\ell \frac{1}{N_\ell}\sum_{j: C(j)=\ell}  \xi_{j\to i} \sum_{k=1}^t \phi ^{\ell\to C(i)}_{t+1-k}\hat{\lambda}^{j,N}_k\right) \nonumber.
\end{align}
In this process, the only source of randomness is the SBM graph, as can be seen from the presence of $\xi_{j \to i}$ and the absence of $\Pi^i$.

The intuition here is that conditioned on the event $E^N$ defined in Lemma \ref{lmm:vershynin}, the adjacency matrix $\Xi = (\xi ^{j\to i})_{(i,j)\in \{1,\cdots,N\}^2}$ is very close to its expectation $\Eb \left[\Xi\right]$, when the number of nodes $N$ is large. This will yield, with a probability higher than $1-2\mathrm e^{-N}$, that $\hat \lambda^{i,N}_t \simeq \bar \lambda^{i,N}_t$, because of the similarity between Equations \eqref{eq:bar_broadcast} and \eqref{def:lambda_hat}. The other half of the proof, that is that $\hat \lambda^{i,N}_t \simeq  \lambda^{i,N}_t$ for a given realisation of the graph will be proven using mean field approximation techniques. 

Before giving our convergence results, we state the following crucial assumption.
\begin{Assumption}
\label{ass: crucial}
    We have that $\nu <1 $, where 
    $$\nu =L\max_{h\in \{1,\cdots,q\}} \alpha_h \left\| \left(p_{h \to \ell}\mathcal S \left(\left|\phi^{h \to \ell}\right |\right)\right)_{(\ell,h)\in \{1,\cdots,q\}^2}\right\|_2.$$
\end{Assumption}
We note that Assumption \ref{ass: crucial} is stronger than \ref{ass:stability_MF} in two ways:
\begin{enumerate}
    \item It involves an upper bound on the Euclidean norm $\|\cdot\|_2$ which dominates the spectral norm.
    \item It involves an upper bound on the maximum proportion of nodes multiplying the kernels instead of the weighted effect of the kernels. 
\end{enumerate}

As we will see in the next section, we will need the even stronger stability assumption to prove time-independent upper bounds.

\begin{Assumption}
\label{ass:crucial_strong}
    We have that $\nu_{\mathcal S}<1$, where 
    $$\nu_{\mathcal S} =L\max_{h\in \{1,\cdots,q\}} \alpha_h \mathcal S \left(\left\| \left(p_{h \to \ell}\left|\phi^{h \to \ell}\right |\right)_{(\ell,h)\in \{1,\cdots,q\}^2}\right\|_2\right).$$
\end{Assumption}

\subsection{Convergence in the Euclidean norm}
\label{sec:cvg_euclidean}
We start by proving the convergence in the Euclidean norm because of its presence in Lemma \ref{lmm:vershynin}. 
First, we show that, if the aforementioned stability condition is in force, the auxiliary intensity $\hat \lambda ^{N}$ remains bounded in time, conditioned on the event $E^N$, whose probability $\Pb$ is higher than $1-2\mathrm e^{-N}$.

We recall that in the quenched case (the graph's randomness in frozen), $\hat \lambda ^N$ is a deterministic time series. 

\begin{Lemma}
\label{lemma:hat_lambda}
   Let $N$ be a fixed number of nodes in $\N^*$ and $T  \in \N^*$ be the time horizon. If Assumption \ref{ass: crucial} holds, then, there exist two positive constants $C$ and $c$ that do not depend on $N$ nor $T$ such that on the event $E^N$ (such that $\Pb (E^N) \ge 1-2\mathrm{e}^{-N}$) we have that 
    $$\left \|\hat \lambda ^{N}_{t}\right\|_2 \leq C \sqrt{N} \exp\left( c\frac{T}{\sqrt N}\right)$$
    for all $t=1,\cdots,T$. 
       If Assumption \ref{ass:crucial_strong} holds, then there exists a threshold $N_0$ that does not depend on time such that for any $N\ge N_0$
    $$\left \|\hat \lambda_t^{N} \right \|_2 \le C \sqrt N,$$
    for all $t \in \N$ on $E^N$.
\end{Lemma}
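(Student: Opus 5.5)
The plan is to compare $\hat\lambda^N$ with its mean-field proxy by splitting the random kernel as $\Phi^N_t = \bar\Phi^N_t + \Delta^N_t$. Here $\bar\Phi^{ij,N}_t = \frac1N p^{C(j)\to C(i)}\phi^{C(j)\to C(i)}_t$ is the expected kernel, and $\Delta^N_t = \frac1N\big((\Xi-\Eb[\Xi])^{ij}\phi^{C(j)\to C(i)}_t\big)_{ij}$ is the fluctuation.

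\emph{Step 1: linear recursive inequality.} Since $\psi$ is $L$-Lipschitz and non-negative, $\psi(x)\le \psi(0)+L|x|$. The recursion \eqref{def:lambda_hat} then gives, componentwise,
\[
\hat\lambda^N_{t}\preceq \psi(0)\mathbf 1+L|\mu^N|+L\sum_{k<t}\big(|\bar\Phi^N_{t-k}|+|\Delta^N_{t-k}|\big)\hat\lambda^N_k .
\]
Taking Euclidean norms, the constant term is $O(\sqrt N)$.

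\emph{Step 2: bounding the two kernels in spectral norm.} For $\bar\Phi^N$, the matrix is block-constant, so $\big\|\sum_t|\bar\Phi^N_t|\big\|_2$ equals the spectral norm of the $q\times q$ matrix with entries $\sqrt{N_hN_\ell}\,\frac1N p_{\ell\to h}\mathcal S(|\phi^{\ell\to h}|)=\sqrt{\alpha_h\alpha_\ell}\,p\,\mathcal S$. This is bounded by $\max_h\alpha_h\|(p\mathcal S)\|_2$, and after multiplying by $L$ by $\nu$. I expect some care is needed here: one has to factor $D^{1/2}MD^{1/2}$ with $D=\mathrm{diag}(\alpha)$ and use $\|D^{1/2}\|_2^2=\max\alpha_h$. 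The same computation applied at each fixed lag gives $L\sum_t\|\bar\Phi^N_t\|_2\le\nu_{\mathcal S}$. For $\Delta^N_t$, I write it as a sum over the $q^2$ block pairs, each piece being a restriction of $(\Xi-\Eb\Xi)$ multiplied by the scalar $\phi^{\ell\to h}_t/N$. A block restriction does not increase the spectral norm, so on $E^N$ Lemma~\ref{lmm:vershynin} gives $\|\Delta^N_t\|_2\le q^2 C\max_{\ell,h}|\phi^{\ell\to h}_t|/\sqrt N$. Summed over lags, this is $O(1/\sqrt N)$ because the kernels are summable. The entrywise absolute value $|\Delta|$ is a problem, since spectral norms do not pass through absolute values. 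To avoid it, I will not take absolute values on $\Delta$. Instead I use $|\psi(x)|\le\psi(0)+L|x|$ and bound $\|x\|_2$ directly by $\|\mu^N\|_2+\sum_k(\|\bar\Phi_{t-k}\|_2+\|\Delta_{t-k}\|_2)\|\hat\lambda_k\|_2$. This uses only nonnegativity of $\hat\lambda$, and with it $\bar\Phi$ needs no absolute value either; the bound $\|\bar\Phi_t\|_2\le\|\,|\bar\Phi_t|\,\|_2$ holds anyway.

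\emph{Step 3: discrete Grönwall.} Set $u_t=\max_{s\le t}\|\hat\lambda_s\|_2$. Under Assumption~\ref{ass: crucial}, the $\bar\Phi$ term is bounded by the sum over lags of per-lag norms, and that is not directly $\nu$. I will handle this by working with the summed matrix on the running maximum: $\sum_k|\bar\Phi_{t-k}|\hat\lambda_k\preceq \mathcal S(|\bar\Phi|)\max_{k}\hat\lambda_k$ componentwise, with the running componentwise maximum bounded via a vector sup argument. This gives
\[
u_t\le C\sqrt N+\nu u_t+\frac{c}{\sqrt N}\sum_{k<t}\|\hat\lambda_k\|_2 ,
\]
hence $u_t\le \frac{C\sqrt N}{1-\nu}+\frac{c'}{\sqrt N}\sum_{k<t}u_k$. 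Discrete Grönwall then yields $C\sqrt N(1+c'/\sqrt N)^T\le C\sqrt N e^{c'T/\sqrt N}$. I expect this step to be the main obstacle, namely making the componentwise maximum compatible with the Euclidean bound. If it fails, I would instead bound the full $\ell_1$-in-time convolution operator on sequences via $\nu$.

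\emph{Step 4: the strong assumption.} Under Assumption~\ref{ass:crucial_strong}, the per-lag bound $L\sum_t\|\bar\Phi_t\|_2\le\nu_{\mathcal S}$ holds. Together with $L\sum_t\|\Delta_t\|_2\le c/\sqrt N$, the whole kernel has total norm mass at most $\nu_{\mathcal S}+c/\sqrt N$. Choose $N_0$ so that this is at most $(1+\nu_{\mathcal S})/2<1$. Then $u_t\le C\sqrt N+\frac{1+\nu_{\mathcal S}}2u_t$, so $u_t\le \frac{2C}{1-\nu_{\mathcal S}}\sqrt N$ uniformly in $t$, and $N_0$ does not depend on time.
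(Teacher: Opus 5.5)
\textbf{Verdict: genuine gap in Step 3.} Steps 1, 2 and 4 are sound. The block-restriction bound on $\|\Delta^N_t\|_2$ is a simpler substitute for Lemma~\ref{lmm:average_adjacency}, and the per-lag argument in Step 4 correctly gives the time-uniform bound under Assumption~\ref{ass:crucial_strong}. Step 3, however, is the only place where the weaker Assumption~\ref{ass: crucial} has to do the work, so the first claim of the lemma is not established.

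From $\sum_{k<t}|\bar\Phi^N_{t-k}|\hat\lambda^N_k\preceq\mathcal S(|\bar\Phi^N|)\,m_{t-1}$, with $m_{t-1}$ the componentwise running maximum, you only obtain $\nu\|m_{t-1}\|_2$. But $\|m_{t-1}\|_2$ is not controlled by $u_{t-1}=\max_{s<t}\|\hat\lambda^N_s\|_2$; in general it is only bounded by $\sqrt{t}\,u_{t-1}$. The estimate your display needs, $L\|\sum_{k<t}|\bar\Phi^N_{t-k}|x_k\|_2\le\nu\max_{k<t}\|x_k\|_2$, is in fact false for nonnegative $x_k$. Take:
\begin{itemize}
\item $q=2$ and $\alpha_1=\alpha_2=1/2$;
\item $p_{2\to1}\phi^{2\to1}_n=b\,\mathbf 1_{\{n=1\}}$ and $p_{1\to1}\phi^{1\to1}_n=b\,\mathbf 1_{\{n=2\}}$, all other kernels zero;
\item $x_{t-1}$ the indicator vector of class $2$ and $x_{t-2}$ that of class $1$.
\end{itemize}
Then the left side equals $Lb\sqrt{N/2}$, while $\nu=Lb/\sqrt2$ and $\max_k\|x_k\|_2=\sqrt{N/2}$, so the inequality fails by a factor $\sqrt2$. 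Your fallback has the same defect. On $\ell_1$- or $\ell_\infty$-in-time sequences, convolution by $|\bar\Phi^N|$ is naturally bounded by $\sum_n\|\bar\Phi^N_n\|_2$, i.e.\ by $\nu_{\mathcal S}$, not by $\nu$. Taking Euclidean norms before resolving the time convolution of the mean-field part is exactly what forces the strong assumption.

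\textbf{The missing idea (the paper's route).} Invert the nonnegative mean-field convolution at the level of the componentwise order first, and only then take norms. Set $a=\psi(0)\mathbf 1+L|\mu^N|$ and $w_s=L\bigl|\sum_{k<s}\Delta^N_{s-k}\hat\lambda^N_k\bigr|\succeq0$. Lemma~\ref{lmm:convolution} with $A=L|\bar\Phi^N|$ gives $\hat\lambda^N_t\preceq\bigl((\delta^N+R)*(a+w)\bigr)_t$, where $R=\sum_{m\ge1}(L|\bar\Phi^N|)^{(*m)}$. Each $(\delta^N+R)_n$ is nonnegative and entrywise dominated by the total mass $(I^N-L\mathcal S(|\bar\Phi^N|))^{-1}$, whose spectral norm is at most $1/(1-\nu)$. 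The spectral norm is monotone on nonnegative matrices, so
$\|\hat\lambda^N_t\|_2\le\frac{1}{1-\nu}\bigl(\|a\|_2+\sum_{s\le t}\|w_s\|_2\bigr)\le C\sqrt N+\frac{c}{\sqrt N}\sum_{k<t}\|\hat\lambda^N_k\|_2$,
and Gr\"onwall concludes.

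\textbf{Alternative repair of your own idea.} Your running-maximum approach also works if you track $v_t=\|m_t\|_2$ instead of $u_t$. Since $w_s\succeq0$, $\max_{s\le t}w_s\preceq\sum_{s\le t}w_s$, hence $m_t\preceq a+L\mathcal S(|\bar\Phi^N|)m_{t-1}+\sum_{s\le t}w_s$. Using $v_{t-1}\le v_t$, this yields $(1-\nu)v_t\le\|a\|_2+\frac{c}{\sqrt N}\sum_{k<t}v_k$, and Gr\"onwall again gives $C\sqrt N\exp(cT/\sqrt N)$.
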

\begin{proof}
    The proof of this Lemma can be found in Section \ref{proof: hat_lambda}.
\end{proof}

We are now ready to give an upper bound on the average (in $\E$) distance between the SBM Poisson autoregression intensity $\lambda ^{N}$ given in \eqref{eq: def_pois_auto} and the auxiliary intensity $\hat \lambda^{N}$ given in \eqref{def:lambda_hat}. 
\begin{Proposition}
\label{prop:lambda_hat}
    Let $N$ be a fixed number of nodes in $\N^*$ and $T  \in \N^*$ be the time horizon. If Assumption \ref{ass: crucial} holds, then, there exist two positive constants $C$ and $c$ that do not depend on $N$ nor $T$ such that on the event $E^N$ (such that $\Pb (E^N) \ge 1-2\mathrm{e}^{-N}$) we have that 
    $$\left\|\E \left[\left| \lambda^{N}_{t}-\hat \lambda^{N}_{t}\right|\right] \right\|_2\leq C \exp\left(c \frac{T}{\sqrt N}\right)$$
    for all $t=1,\cdots,T$. 

    If Assumption \ref{ass:crucial_strong} holds, then there exists a threshold $N_0$ that does not depend on time such that for any $N\ge N_0$
     $$\left\|\E \left[\left| \lambda^{N}_{t}-\hat \lambda^{N}_{t}\right|\right] \right\|_2\leq C,$$
     for all $t \in \N$ on $E^N$.
\end{Proposition}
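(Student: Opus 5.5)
We compare the two recursions node by node and handle the Poissonian fluctuations with a martingale-type argument. For each $t$, the Lipschitz property of $\psi$ gives
\begin{align*}
\left|\lambda^N_t-\hat\lambda^N_t\right| &\preceq L\left|\sum_{k=0}^{t-1}\Phi^N_{t-k}\left(Y^N_k-\hat\lambda^N_k\right)\right|\\
&\preceq L\sum_{k=0}^{t-1}\left|\Phi^N_{t-k}\right|\left|\lambda^N_k-\hat\lambda^N_k\right| + L\left|\sum_{k=0}^{t-1}\Phi^N_{t-k}\left(\Pi_k(\lambda^N_k)-\lambda^N_k\right)\right|.
\end{align*}
Here we split $Y^N_k-\hat\lambda^N_k=(Y^N_k-\lambda^N_k)+(\lambda^N_k-\hat\lambda^N_k)$, and we treat the $k=0$ term consistently with the definition of $\hat\lambda^N$; it only contributes a bounded initial error. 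Taking $\E$ and setting $e_t:=\E[|\lambda^N_t-\hat\lambda^N_t|]$ gives a discrete Volterra inequality
\[
e_t\preceq L\sum_{k<t}|\Phi^N_{t-k}|\,e_k + L\,\E\Big|\sum_{k<t}\Phi^N_{t-k}M_k\Big|, \qquad M_k:=\Pi_k(\lambda^N_k)-\lambda^N_k .
\]

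\textbf{Bounding the noise term.} Conditionally on $\mathcal F_{k-1}$, the $M_k$ are centred with independent coordinates of variance $\lambda^{j,N}_k$, and they are uncorrelated across $k$. Using Cauchy--Schwarz coordinatewise,
\[
\E\Big|\Big(\sum_k\Phi_{t-k}M_k\Big)^i\Big|\le\Big(\sum_k\sum_j(\Phi^{ij}_{t-k})^2\,\E\lambda^{j,N}_k\Big)^{1/2}.
\]
Since $|\Phi^{ij}|\le \|\phi\|/N$, summing the squares over $i$ and taking the $\ell_2$ norm of the vector yields a bound of order
\[
\frac{1}{\sqrt N}\Big(\sum_k \sup_s|\phi_s|\,\|\E\lambda^N_k\|_1\Big)^{1/2},
\]
with $\|\E\lambda^N_k\|_1\le \|\hat\lambda^N_k\|_1+\|e_k\|_1\le\sqrt N(\|\hat\lambda^N_k\|_2+\|e_k\|_2)$. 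By Lemma \ref{lemma:hat_lambda}, $\|\hat\lambda^N_k\|_2\lesssim\sqrt N e^{cT/\sqrt N}$, so the noise vector has $\ell_2$ norm $O(e^{cT/\sqrt N})$. This step needs care: the sum over $k$ should be weighted by $|\phi_{t-k}|$ rather than by $\sup|\phi|$, which keeps it summable in time. We also need a crude a priori bound on $e_k$, which can be fed back through the same inequality using $\sqrt{x}\le 1+x$ together with the factor $1/\sqrt N$.

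\textbf{Closing the Volterra inequality on $E^N$.} The key step is controlling $\|\sum_k|\Phi^N_{t-k}|e_k\|_2$. Write $|\Phi^N_s|=\frac1N\Eb[\Xi]\odot|\phi_s|+\frac1N(\Xi-\Eb\Xi)\odot|\phi_s|$. The block part has spectral norm at most about $\max_h\alpha_h\|(p\,|\phi_s|)\|_2$; this is exactly the quantity in Assumptions \ref{ass: crucial} and \ref{ass:crucial_strong}, using that a block-constant $N\times N$ matrix with blocks of sizes $N_\ell$ has norm controlled by $N\max\alpha$ times the $q\times q$ norm. For the fluctuation part, we apply Lemma \ref{lmm:vershynin} block by block: the Hadamard product with a block-constant matrix of values $|\phi^{\ell\to h}_s|$ decomposes into $q^2$ restricted submatrices, each of norm at most $C\sqrt N$. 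This gives norm $\le C'\max|\phi_s|/\sqrt N$. Hence
\[
\|e_t\|_2\le \sum_{k<t}\big(a_{t-k}+b_{t-k}/\sqrt N\big)\|e_k\|_2+C e^{cT/\sqrt N},
\]
with $\sum a\le\nu<1$ or $\sum a\le\nu_{\mathcal S}$ respectively. The second bound needs the sum over time inside the norm, which is why Assumption \ref{ass:crucial_strong} is required.

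\textbf{Concluding the two regimes.} Under Assumption \ref{ass: crucial}, only $\sum_s\|\cdot\|$ of the block-restricted kernel is controlled, through $\nu$ after bounding $\|\sum_s A_s\|$. A renewal or discrete Grönwall argument then gives growth at most $(1+b/\sqrt N)^{T}\le e^{cT/\sqrt N}$ times $1/(1-\nu)$, which is the first claim. Under Assumption \ref{ass:crucial_strong}, for $N\ge N_0$ with $\nu_{\mathcal S}+\mathcal S(b)/\sqrt{N_0}<1$, the renewal inequality is a strict contraction. Combined with the uniform bound $\|\hat\lambda^N_t\|_2\le C\sqrt N$ from Lemma \ref{lemma:hat_lambda}, this yields $\|e_t\|_2\le C$ uniformly in $t$.

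The main obstacle is the spectral bound on the Hadamard product $(\Xi-\Eb\Xi)\odot|\phi_s|$ and its summation over $s$; decomposing into the $q^2$ class blocks is the route I would try first.
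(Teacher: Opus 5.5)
Your treatment of the Poisson noise is a valid alternative to the paper's. You split $Y^N_k-\hat\lambda^N_k=(Y^N_k-\lambda^N_k)+(\lambda^N_k-\hat\lambda^N_k)$ and absorb the resulting feedback $\E[\lambda^N_k]\preceq\hat\lambda^N_k+e_k$ through the $1/\sqrt N$ factor. The paper instead splits $Y^N_k-\hat\lambda^N_k=(\Pi_k(\lambda^N_k)-\Pi_k(\hat\lambda^N_k))+(\Pi_k(\hat\lambda^N_k)-\hat\lambda^N_k)$, so the noise has the deterministic quenched intensity $\hat\lambda^N$ and there is no feedback at all. Your argument for the second claim also works: $\sum_s\|\Eb[|\Phi^N_s|]\|_2\le\nu_{\mathcal S}$, the fluctuation and feedback kernels are summable and $O(N^{-1/2})$, and a scalar renewal inequality closes for $N\ge N_0$.

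The gap is in the first claim. Your inequality $\|e_t\|_2\le\sum_{k<t}(a_{t-k}+b_{t-k}/\sqrt N)\|e_k\|_2+Ce^{cT/\sqrt N}$ with $\sum_s a_s\le\nu$ is false. With $a_s=\|\Eb[|\Phi^N_s|]\|_2$ you only get $\sum_s a_s\le\nu_{\mathcal S}$, because Assumption \ref{ass: crucial} controls the norm of the time-summed kernel, not the sum of the norms. (Your remark about ``the sum over time inside the norm'' has the two assumptions the wrong way round.) Concretely, take $q=2$, $\alpha_1=\alpha_2=1/2$, all $p=1$, $L=1$, no cross-interaction, $\phi^{1\to1}$ equal to $3/2$ at lag $1$ only and $\phi^{2\to2}$ equal to $3/2$ at lag $2$ only. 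Then $\nu=3/4$, but $a_1=a_2=3/4$. The best your scalar inequality can give is geometric growth at rate $r\approx1.32$ (the root of $r^2=\frac34r+\frac34$), independent of $N$, not $e^{cT/\sqrt N}$. Your closing sentence names the factor $1/(1-\nu)$, but nothing in your derivation produces it.

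The missing idea, which is the core of the paper's argument (it reuses the proof of Lemma \ref{lemma:hat_lambda}), is to invert the mean part at the vector level before taking any norm.
\begin{itemize}
\item Apply Lemma \ref{lmm:convolution} to the nonnegative matrix kernel $\Eb[|\Phi^N|]$. This gives $e_t\preceq\big((\delta^N_0+R)*(n+(|\Phi^N|-\Eb[|\Phi^N|])*e)\big)_t$, with $n$ the noise vector and $R$ the resolvent.
\item Then $\sum_{m\ge0}(\delta^N_0+R)_m=(I^N-\mathcal S(\Eb[|\Phi^N|]))^{-1}$ has spectral norm at most $1/(1-\nu)$ by Lemma \ref{lmm: euclidean_norm}. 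Since $\|\cdot\|_2$ is monotone on entrywise nonnegative matrices, each $\|(\delta^N_0+R)_m\|_2\le1/(1-\nu)$.
\item With Lemma \ref{lmm:average_adjacency}, the fluctuation part gives $\|e_t\|_2\le Ce^{cT/\sqrt N}+\frac{C}{\sqrt N}\sum_{k<t}\|e_k\|_2$, and the discrete Gr\"onwall lemma yields the claim.
\end{itemize}

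For this to work, the noise must pass through the resolvent as a vector, not through its $\ell_2$ norm. Your coordinatewise Cauchy--Schwarz bound is uniform in $i$, so keep it as $n_m\preceq\frac{C}{\sqrt N}e^{cT/\sqrt N}\mathbf 1$, plus the feedback part, which only adds another $O(N^{-1/2})$ Gr\"onwall term. Then $(\delta^N_0+R)*n\preceq\frac{C}{\sqrt N}e^{cT/\sqrt N}(I^N-\mathcal S(\Eb[|\Phi^N|]))^{-1}\mathbf 1$, whose $\ell_2$ norm is at most $Ce^{cT/\sqrt N}/(1-\nu)$. This is why the paper states the noise bound node by node in \eqref{ineq:lambda_hat}. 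If you collapse $n_m$ to $\|n_m\|_2$ first and then convolve lag by lag, you pick up an extra factor $T$.
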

\begin{proof}
    The proof of this Proposition can be found in Section \ref{proof:lambda_hat}.
\end{proof}
The next ingredient we need is a control on the distance between the auxiliary intensity process $\hat \lambda ^{N}$ whose single source of randomness is the SBM graph and the fully deterministic intensity $\bar \lambda ^{N}$. 
\begin{Proposition}
\label{prop:hat_bar}
    Let $N$ be a fixed number of nodes in $\N^*$ and $T  \in \N^*$ be the time horizon. If Assumption \ref{ass: crucial} holds, then, there exist two positive constants $C$ and $c$ that do not depend on $N$ nor $T$ such that on the event $E^N$ (such that $\Pb (E^N) \ge 1-2\mathrm{e}^{-N}$) we have that 
     $$\left \| \hat \lambda^{N}_t-\bar \lambda^{N}_t\right\|_2 \leq C T \exp \left ( c\frac{T}{\sqrt N}\right)$$
    for all $t=1,\cdots,T$.
    
    If instead of taking the uniform error in time we bound the time average, we have that
     $$\frac{1}{T}\left \| \sum_{t=1}^T \left | \hat \lambda^{N}_t-\bar \lambda^{N}_t \right | \right\|_2 \leq C \exp \left ( c\frac{T}{\sqrt N}\right).$$
      If Assumption \ref{ass:crucial_strong} holds, then there exists a threshold $N_0$ that does not depend on time such that for any $N\ge N_0$
     $$\left\| \hat \lambda^{N}_{t}-\bar \lambda^{N}_{t} \right\|_2\leq C,$$
     for all $t \in \N$ on $E^N$.
\end{Proposition}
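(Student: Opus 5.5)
Let $\Delta_t := \hat\lambda^N_t - \bar\lambda^N_t \in \R^N$. The plan is to derive a renewal-type inequality for $|\Delta_t|$ and control it in $\|\cdot\|_2$ using the block structure together with Lemma \ref{lmm:vershynin}.

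\textbf{Step 1: a convolution inequality for $|\Delta_t|$.} Write $\Phi^N$ as in \eqref{def: Phi} and let $\bar\Phi^N_t := \Eb[\Phi^N_t]$, that is $\bar\Phi^{ij,N}_t = \frac1N p^{C(j)\to C(i)}\phi^{C(j)\to C(i)}_t$. Subtracting \eqref{eq:bar_broadcast} from \eqref{def:lambda_hat} and using that $\psi$ is $L$-Lipschitz gives
$$|\Delta_t| \preceq L\sum_{k<t}|\bar\Phi^N_{t-k}|\,|\Delta_k| + L\sum_{k<t}\left|(\Phi^N_{t-k}-\bar\Phi^N_{t-k})\hat\lambda^N_k\right|.$$
The first term is the deterministic mean-field operator. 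Because $\bar\Phi^N$ is constant on blocks, its $\|\cdot\|_2$ operator norm on $\R^N$ equals the norm of the $q\times q$ matrix $(\sqrt{\alpha_\ell\alpha_h}\,p\,\phi)$. This is bounded by $\max_h\alpha_h\|(p|\phi|)\|_2$, so after summing in time it is governed by $\nu$ (Assumption \ref{ass: crucial}), or by $\nu_{\mathcal S}$ for each fixed lag (Assumption \ref{ass:crucial_strong}). For the second term, $\Phi^N_s-\bar\Phi^N_s$ is the Hadamard product of $(\Xi-\Eb\Xi)/N$ with a $q\times q$ block-constant matrix of kernel values. Expanding it as a sum of at most $q^2$ terms of the form $D_h(\Xi-\Eb\Xi)D_\ell$, with $D$ the diagonal block projectors, Lemma \ref{lmm:vershynin} gives $\|\Phi^N_s-\bar\Phi^N_s\|_2\le C N^{-1/2}\max_{\ell,h}|\phi^{\ell\to h}_s|$ on $E^N$.

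\textbf{Step 2: bounding the forcing term.} Lemma \ref{lemma:hat_lambda} gives $\|\hat\lambda^N_k\|_2\le C\sqrt N e^{cT/\sqrt N}$ on $E^N$, so the forcing term is at most $C e^{cT/\sqrt N}\sum_s\max|\phi_s| \le C' e^{cT/\sqrt N}$ in $\|\cdot\|_2$, uniformly in $t$. Taking norms yields
$$e_t := \|\Delta_t\|_2 \le \sum_{s=1}^{t} a_s e_{t-s} + C' e^{cT/\sqrt N},$$
where $a_s = L\max_h\alpha_h\|(p|\phi_s|)\|_2$.

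\textbf{Step 3: solving the renewal inequality.} Under Assumption \ref{ass:crucial_strong} we have $\mathcal S(a)=\nu_{\mathcal S}<1$. Since the forcing term is constant, the renewal inequality gives $e_t\le C'(1-\nu_{\mathcal S})^{-1}$ for all $t$, which is the uniform statement. For large $N$ we take $N_0$ from Lemma \ref{lemma:hat_lambda} to remove the exponential factor.

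Under only Assumption \ref{ass: crucial}, the control is $\sum_s\|\bar\Phi\|$ of the summed kernels rather than the sum of per-lag norms. We therefore work with $|\Delta_t|$ componentwise and sum in time first. Setting $D_T=\sum_{t\le T}|\Delta_t|$, the first term becomes $\preceq L\,\mathcal S(|\bar\Phi^N|)D_T$, whose $\|\cdot\|_2$ norm is at most $\nu\|D_T\|_2$. The forcing term sums to $T\,C'e^{cT/\sqrt N}$. This gives $\|D_T\|_2\le (1-\nu)^{-1}C'Te^{cT/\sqrt N}$, which is the time-average bound. The pointwise bound $CTe^{cT/\sqrt N}$ then follows trivially, since $\||\Delta_t|\|_2\le\|D_T\|_2$.

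The main obstacle is Step 1's norm estimate for the Hadamard product of the centered adjacency matrix with the kernel blocks. Lemma \ref{lmm:vershynin} only controls $\Xi-\Eb\Xi$ itself, so the block decomposition into $q^2$ projected pieces is essential, and it costs only a constant factor $q^2$. A secondary point to check is that the block-constant structure of $\bar\Phi^N$ really yields the $\alpha$-weighted $q\times q$ norm matching $\nu$.
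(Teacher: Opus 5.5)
Your proposal is correct and follows the paper's proof. It uses the same Lipschitz split into the mean-field convolution with $\Eb[\Phi^N]$ plus the fluctuation forcing $W$, and the same bound $\|W_t\|_2\le C\exp(cT/\sqrt N)$ from Lemma \ref{lmm:average_adjacency} (which your block-projector decomposition simply re-proves) and Lemma \ref{lemma:hat_lambda}. It inverts with $\nu<1$ for the time-summed bound and $\nu_{\mathcal S}<1$ for the uniform-in-time bound. The only difference is mechanical: you absorb $\nu\|D_T\|_2$ and read the pointwise bound off $|\Delta_t|\preceq D_T$, whereas the paper first applies Lemma \ref{lmm:convolution} with the resolvent $R$ and bounds $\sum_k\|(\delta^N_0+R)_{t+1-k}\|_2\le 1+(t+1)\|\mathcal S(R)\|_2$.
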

\begin{proof}
    The proof of this Proposition can be found in Section \ref{proof:hat_bar}.
\end{proof}
The last two propositions show that under the weaker stability Assumption \ref{ass: crucial}, the sum of squares of the doubly stochastic intensity $\lambda^{N}$ and the purely deterministic $\bar \lambda ^{N}$ remains bounded by something of order $O(1)$ as soon as $N \sim T^2$. The lack of such a guarantee and the direct use of the discrete Gr\" onwall Lemma would yield an upper bound of order $O(\mathrm e^{cT})$, which does not improve as the network's size grows to infinity. We also emphasize that under the stronger stability Assumption \ref{ass:crucial_strong}, such bounds are valid for all instants in time.

We now use the fact that two count random variables thinned from the same underlying Poisson randomness $\Pi$ are close as soon as their intensity processes are similar. 

\begin{Theorem}
\label{thm:main_l_2}
    Let $N$ be a fixed number of nodes in $\N^*$ and $T  \in \N^*$ be the time horizon. We recall that $\bar Y^{N}$ is defined in \eqref{def: Y_bar}. If Assumption \ref{ass: crucial} holds, then, there exist two positive constants $C$ and $c$ that do not depend on $N$ nor $T$ such that on the event $E^N$ (such that $\Pb (E^N) \ge 1-2\mathrm{e}^{-N}$) we have that 
    $$\frac{1}{N}\sum _{i=1}^N \E \left[ \left| Y_t^{i,N}-\bar Y^{i,N}_t\right|\right]\leq \frac{CT}{\sqrt N} \exp \left ( c\frac{T}{\sqrt N}\right),$$
    for all $t=1,\cdots,T$. 
     If instead of taking the uniform error in time and we bound the time average, we have that
     $$\frac{1}{T}\sum_{t=1}^T\frac{1}{N}\sum _{i=1}^N \E \left[ \left| Y_t^{i,N}-\bar Y^{i,N}_t\right|\right]\leq \frac{C}{\sqrt N} \exp \left ( c\frac{T}{\sqrt N}\right).$$
       If Assumption \ref{ass:crucial_strong} holds, then there exists a threshold $N_0$ that does not depend on time such that for any $N\ge N_0$
       $$\frac{1}{N}\sum _{i=1}^N \E \left[ \left| Y_t^{i,N}-\bar Y^{i,N}_t\right|\right]\leq \frac{C}{\sqrt N} ,$$
       for all $t \in \N$ on $E^N$.
\end{Theorem}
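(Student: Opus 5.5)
The plan is to reduce the count-level statement to the intensity-level estimates already available, using the common driving randomness $\Pi$. Since $Y^{i,N}_t=\Pi^i_t(\lambda^{i,N}_t)$ and $\bar Y^{i,N}_t=\Pi^i_t(\bar\lambda^{i,N}_t)$ are thinned from the same unit-rate Poisson process, $|Y^{i,N}_t-\bar Y^{i,N}_t|$ is the number of points of $\Pi^i_t$ between $\min(\lambda^{i,N}_t,\bar\lambda^{i,N}_t)$ and $\max(\lambda^{i,N}_t,\bar\lambda^{i,N}_t)$. The intensity $\lambda^{i,N}_t$ is $\mathcal F_{t-1}$-measurable and $\Pi_t$ is independent of $\mathcal F_{t-1}$, while $\bar\lambda^{i,N}_t$ is deterministic once the graph is frozen. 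Conditioning on $\mathcal F_{t-1}$ therefore gives
\[
\E\left[\left|Y^{i,N}_t-\bar Y^{i,N}_t\right|\right]=\E\left[\left|\lambda^{i,N}_t-\bar\lambda^{i,N}_t\right|\right].
\]
Averaging over $i$ and using Cauchy--Schwarz,
\[
\frac1N\sum_{i=1}^N\E\left[\left|Y^{i,N}_t-\bar Y^{i,N}_t\right|\right]\le \frac{1}{\sqrt N}\left\|\E\left[\left|\lambda^{N}_t-\bar\lambda^{N}_t\right|\right]\right\|_2 .
\]

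Next, on $E^N$, I split with the triangle inequality, coordinatewise and then in $\|\cdot\|_2$:
\[
\left\|\E\left[\left|\lambda^N_t-\bar\lambda^N_t\right|\right]\right\|_2\le \left\|\E\left[\left|\lambda^N_t-\hat\lambda^N_t\right|\right]\right\|_2+\left\|\hat\lambda^N_t-\bar\lambda^N_t\right\|_2 ,
\]
where $\hat\lambda^N$ is deterministic in the quenched setting. Proposition \ref{prop:lambda_hat} bounds the first term by $C\exp(cT/\sqrt N)$, and Proposition \ref{prop:hat_bar} bounds the second by $CT\exp(cT/\sqrt N)$. Since $T\ge1$, the sum is at most $2CT\exp(cT/\sqrt N)$, and dividing by $\sqrt N$ gives the first claim after renaming constants.

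For the time average, I sum the coordinatewise inequality over $t$ before taking norms:
\[
\sum_{t=1}^T\E\left[\left|\lambda^N_t-\bar\lambda^N_t\right|\right]\preceq \sum_{t=1}^T\E\left[\left|\lambda^N_t-\hat\lambda^N_t\right|\right]+\sum_{t=1}^T\left|\hat\lambda^N_t-\bar\lambda^N_t\right| .
\]
For the first sum, the triangle inequality in $\|\cdot\|_2$ and Proposition \ref{prop:lambda_hat} give a bound $TC\exp(cT/\sqrt N)$. The second sum is controlled directly by the time-averaged bound in Proposition \ref{prop:hat_bar}, giving $TC\exp(cT/\sqrt N)$ as well. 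Interchanging $\frac1T\sum_t$ with $\frac1N\sum_i$ and applying Cauchy--Schwarz once to the summed vector then yields the $C\exp(cT/\sqrt N)/\sqrt N$ bound. Under Assumption \ref{ass:crucial_strong}, for $N\ge N_0$ (the larger of the two thresholds), both propositions give time-uniform constants, so the same decomposition yields $C/\sqrt N$ for every $t\in\N$.

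The only delicate point is the coupling identity. One must check that $\lambda^{i,N}_t$ is $\mathcal F_{t-1}$-measurable, hence independent of $\Pi^i_t$, so that the conditional expectation of the number of points in a random interval equals its length. The rest is bookkeeping of constants, noting that all statements hold on the same event $E^N$.
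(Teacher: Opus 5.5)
Your proposal is correct and follows the paper's own proof essentially step for step. It passes from counts to intensities via the common driving noise $\Pi$, applies Cauchy--Schwarz over the nodes, uses the triangle inequality through the auxiliary intensity $\hat\lambda^N$, and then invokes Propositions \ref{prop:lambda_hat} and \ref{prop:hat_bar}. You also supply two details the paper leaves implicit: the justification of the coupling identity (that $\lambda^{N}_t$ is $\mathcal F_{t-1}$-measurable and hence independent of $\Pi_t$), and the handling of the time-averaged bound by summing coordinatewise before taking norms.
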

\begin{proof}
The proof of this Theorem can be found in Section \ref{proof:main_l_2}.
\end{proof}
To conclude this section, we point out the height of the likelihood of being on the event $E^N$. For instance, for a small SBM graph of size $N=12$ we have that $\Pb (E^{12})= 1-2 \mathrm e^{-12}\simeq 0.99999.$ If we consider a moderately sized network $N=120$, we have that $\Pb (E^{120})= 1-2 \mathrm e^{-120}\simeq 1-2\cdot 10 ^{-52} \simeq 1.$
This means that effectively for virtually all applications we are almost always under $E^N$.
\subsection{Convergence in the uniform norm}
\label{sec:cvg_uniform}
Theorem \ref{thm:main_l_2} shows that at a given instant in time, the different nodes of the network are \textit{on average} close to a Poisson process with a deterministic intensity. Such a result does not exclude the scenario under which a few nodes differ significantly from their mean field approximation, with the significant difference being "averaged out" by the rest of the nodes.

To have a uniform bound which works for every node, we must obtain results in the $\|\cdot\|_\infty$ norm rather than the $\|\cdot \|_2$ norm. However, we cannot use the $\|\cdot\|_\infty$ norm directly on the centered matrix $\Phi^N_k-\Eb \left[\Phi^n_k\right]$, because the presence of the absolute value in its expression prevents the randomness from averaging out by the law of large numbers. This is why we use the \textit{two-to-infinity} norm, in conjunction with the results we have already proven for the two norm to obtain bounds that work uniformly for all the nodes. 

We recall that the \textit{two-to-infinity} norm of a matrix $A$ is defined as 
$$\left \| A\right \|_{2\to \infty}:=\sup_{x \in \R^N/\{0\}}\frac{\|Ax\|_{\infty}}{\|x\|_2}.$$

We start by proving a result on the boundedness of the application of $\Phi^N-\Eb \left[\Phi^N\right]$ on $\bar \lambda ^{N}$ in the uniform norm using more classical probability theory results. As we hinted above, we avoid dealing with $\Phi^N-\Eb \left[\Phi^N\right]$ separately because its uniform norm is not small. 

\begin{Proposition}
    \label{prop:union_bound}
     Fix $\varepsilon \in (0,1/2)$ and let $N\geq \left[\ln (8/3)/(2 \alpha_{\min})\right]^{1/(2\varepsilon)}$ be the network size. Then on the event $F^N$, whose probability $\Pb$ larger than $1-4Nq \exp (-2\alpha_{\max}N^{2\varepsilon})$, we have that
    $$\left \| \left (\Eb [\Phi^N_{t-k+1}]-\Phi^N_{t-k+1}\right) \bar \lambda^{N}_k \right \|_\infty \leq L \max_{h,\ell=1,\cdots,q} \left | \phi^{\ell\to h}_{t-k+1} \bar \lambda^\ell_k\right| \frac{1}{\sqrt {N^{1-2\varepsilon}}}, $$
for any $t\in \N$ and $k=1,\cdots,t$.
\end{Proposition}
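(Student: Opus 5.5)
The plan is to write the $i$-th coordinate explicitly and apply Hoeffding's inequality class by class, followed by a union bound over nodes and classes. Fix $i$ with $C(i)=h$. Since $\bar\lambda^{j,N}_k=\bar\lambda^{C(j)}_k$ depends only on the class of $j$, we have
\begin{equation*}
\left[\left(\Eb[\Phi^N_{t-k+1}]-\Phi^N_{t-k+1}\right)\bar\lambda^N_k\right]^i=\sum_{\ell=1}^q \alpha_\ell\, \phi^{\ell\to h}_{t-k+1}\bar\lambda^\ell_k\,\frac{1}{N_\ell}\sum_{j:C(j)=\ell}\left(p^{\ell\to h}-\xi^{j\to i}\right).
\end{equation*}
Write $D^{i,\ell}:=\frac{1}{N_\ell}\sum_{j:C(j)=\ell}(\xi^{j\to i}-p^{\ell\to h})$. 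The key structural remark is that $D^{i,\ell}$ does not depend on $t$ or $k$. This is what allows the good event to be chosen once and for all, uniformly over all $t\in\N$ and $k\le t$, and it is why the statement can hold "for any $t$" with a probability that does not depend on $t$.

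Next we control $D^{i,\ell}$. The summands are independent, centred, and bounded in $[-1,1]$ (Bernoulli minus its mean), so Hoeffding's inequality gives
\begin{equation*}
\Pb\left(|D^{i,\ell}|>\tfrac{x}{\sqrt{N_\ell}}\right)\le 2\exp(-2x^2).
\end{equation*}
We take the threshold $|D^{i,\ell}|\le N^{-(1-2\varepsilon)/2}$. Since $N_\ell=\alpha_\ell N$, this corresponds to $x=\sqrt{\alpha_\ell}N^{\varepsilon}$, so the failure probability is at most $2\exp(-2\alpha_\ell N^{2\varepsilon})$. Define $F^N$ as the intersection over $i\le N$ and $\ell\le q$ of the events $\{|D^{i,\ell}|\le N^{-1/2+\varepsilon}\}$. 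A union bound over the $Nq$ pairs then gives a bound on $\Pb((F^N)^c)$ of the stated form, namely $4Nq\exp(-2\alpha N^{2\varepsilon})$ with the appropriate $\alpha$.

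The condition $N\ge[\ln(8/3)/(2\alpha_{\min})]^{1/(2\varepsilon)}$ is equivalent to $2\alpha_{\min}N^{2\varepsilon}\ge\ln(8/3)$. I expect it is used to absorb constants: it makes each single-event failure probability at most $3/4$, so that a constant factor (e.g.\ passing from $2$ to $4$, or handling the one-sided versus two-sided tails) is legitimate and the displayed probability bound is meaningful. Bookkeeping the exact exponent constant ($\alpha_{\min}$ versus $\alpha_{\max}$) is the fiddly part. If the written constant cannot be matched exactly, I would state the bound with $\alpha_{\min}$, which is what Hoeffding naturally gives.

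On $F^N$, the triangle inequality gives
\begin{equation*}
\left|\left[\cdot\right]^i\right|\le \sum_\ell \alpha_\ell\,|\phi^{\ell\to h}_{t-k+1}\bar\lambda^\ell_k|\,N^{-1/2+\varepsilon}\le \max_{h,\ell}|\phi^{\ell\to h}_{t-k+1}\bar\lambda^\ell_k|\,N^{-1/2+\varepsilon},
\end{equation*}
using $\sum_\ell\alpha_\ell=1$. This is bounded by the claimed expression whenever $L\ge 1$; otherwise the factor $L$ must be understood as absorbed into the normalisation. Taking the maximum over $i$ yields the $\|\cdot\|_\infty$ bound. The main obstacle is not analytic but lies in correctly matching the constants in the probability estimate. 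The essential idea is that the randomness reduces to $Nq$ fixed Bernoulli averages, independent of time.
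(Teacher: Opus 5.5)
Your argument is sound and is essentially the paper's. The paper also bounds the $i$-th coordinate by $L\max_{h,\ell}|\phi^{\ell\to h}_{t-k+1}\bar\lambda^\ell_k|\sum_\ell\alpha_\ell|D^{i,\ell}|$, uses an event that does not depend on $(t,k)$, and applies Hoeffding to each class average with $u=N^{-1/2+\varepsilon}$.

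The only procedural difference is how the $Nq$ events are combined. The paper uses independence across rows and classes to write the probability as $\prod_{i,\ell}(1-2\mathrm e^{-2\alpha_\ell N^{2\varepsilon}})$, then applies $\ln(1-x)\ge -x/\sqrt{1-x}$ and $\mathrm e^{-y}\ge 1-y$. The threshold on $N$ serves only to guarantee $2\mathrm e^{-2\alpha_\ell N^{2\varepsilon}}\le 3/4$, hence $\sqrt{1-x}\ge 1/2$, which is where the factor $4$ comes from. Your union bound needs neither independence across rows and classes nor the threshold, and it gives the prefactor $2$.

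Two small remarks. First, the exponent $2x^2$ in your Hoeffding bound requires summands ranging over an interval of length one. This holds because $\xi^{j\to i}-p^{\ell\to h}\in[-p^{\ell\to h},1-p^{\ell\to h}]$; with $[-1,1]$ you would only get $x^2/2$. Second, the factor $L$ is not an issue in the paper's proof, which uses the appendix convention (as in Lemma~\ref{lmm:average_adjacency}) where $\Phi^N_t$ itself carries the factor $L$.

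On the exponent, your suspicion is justified. The paper's final step replaces $\sum_\ell \mathrm e^{-2\alpha_\ell N^{2\varepsilon}}$ by $q\,\mathrm e^{-2\alpha_{\max}N^{2\varepsilon}}$ inside a lower bound for the probability. That goes the wrong way, because $\alpha_\ell\le\alpha_{\max}$ gives $\mathrm e^{-2\alpha_\ell N^{2\varepsilon}}\ge \mathrm e^{-2\alpha_{\max}N^{2\varepsilon}}$. Done correctly, the paper's chain gives $\alpha_{\min}$, exactly as yours does. So neither per-class argument proves the statement as written when the classes have unequal sizes.

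The stated bound can still be obtained if you do not split into per-class events. Take $F^N=\{\max_i\sum_\ell\alpha_\ell|D^{i,\ell}|\le N^{-1/2+\varepsilon}\}$, which is all your final display uses, and write
\[
\sum_{\ell=1}^q\alpha_\ell\left|D^{i,\ell}\right|=\max_{s\in\{-1,1\}^q}\frac1N\sum_{j=1}^N s_{C(j)}\left(\xi^{j\to i}-p^{C(j)\to C(i)}\right).
\]
Each signed row sum has $N$ independent centred terms in intervals of length one. One-sided Hoeffding plus a union bound over the $2^q$ sign patterns and the $N$ rows then give $\Pb((F^N)^c)\le 2^qN\mathrm e^{-2N^{2\varepsilon}}$. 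Under the threshold on $N$ this is at most $4Nq\,\mathrm e^{-2\alpha_{\max}N^{2\varepsilon}}$:
\begin{itemize}
\item for $q\le 4$, because $2^q\le 4q$ and $\alpha_{\max}\le 1$;
\item for $q\ge 5$, because $2(1-\alpha_{\max})N^{2\varepsilon}\ge 2(q-1)\alpha_{\min}N^{2\varepsilon}\ge (q-1)\ln(8/3)\ge q\ln 2-\ln(4q)$.
\end{itemize}
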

\begin{proof}
    The proof can be found in Section \ref{proof:union_bound}.
\end{proof}
We note that the guarantees are worse than the guarantees obtained for the $\|\cdot\|_2$ norm proven in Section \ref{sec:cvg_euclidean}. Indeed, this can be seen in two aspects: (i) the probability $\Pb$ is $1-4Nq \exp (-2\alpha_{\max}N^{2\varepsilon})$ which is clearly lower than $1-2\exp(-N)$, especially if the tolerance $\varepsilon$ is small; (ii) the speed itself is slower, $O(N^{-1/2+\varepsilon})$ compared to $O(N^{-1/2})$. Of course we can approach this ideal speed by taking a smaller value of $\varepsilon$, but this happens with a smaller probability.

We now state a result which is equivalent to Propositions \ref{prop:lambda_hat} and \ref{prop:hat_bar} adapted to the $\|\cdot\|_\infty$ norm. 
In order to do so, we need the following assumption, which is similar to Assumption \ref{ass: crucial} for the uniform norm.
\begin{Assumption}
    \label{ass:uniform}
    We have that $\nu'<1$ where
    $$\nu' = L\left \|\left(\alpha_h p^{h \to \ell}\mathcal S \left(\left | \phi^{h \to \ell}\right |\right)\right)_{(\ell,h)\in \{1,\cdots,q\}} \right \|_\infty.$$
\end{Assumption}
We also introduce its stronger counterpart, which is analogous to Assumption \ref{ass:crucial_strong}.
\begin{Assumption}
    \label{ass:uniform_strong}
    We have that $\nu'_{\mathcal S}<1$ where
    $$\nu'_{\mathcal S} = L \mathcal S \left(\left \|\left(\alpha_h p^{h \to \ell}\left | \phi^{h \to \ell}\right |\right)_{(\ell,h)\in \{1,\cdots,q\}} \right \|_\infty\right).$$
\end{Assumption}

The event $F^N$ mentioned in the Proposition below exists by virtue of Proposition \ref{prop:union_bound}.
\begin{Proposition}
\label{prop:intensity_uniform}
    Let $T \in \N^*$ be the time horizon. Fix an $\varepsilon \in (0,1/2)$ and let $N\geq \left[\ln (8/3)/(2 \alpha_{\min})\right]^{1/(2\varepsilon)}$ be the network size. If Assumptions  \ref{ass: crucial} and \ref{ass:uniform} hold, then on the event $E^N \cap F^N$, whose probability $\Pb$ is larger than $1-4Nq \exp(-2\alpha_{\max}N^{2\varepsilon})-2\exp(-N)$, we have that 
$$\left \| \E \left [ \left| \lambda^{N}_{t+1}-\bar \lambda^{N}_{t+1}\right |\right]\right \|_\infty \leq C T\left(\frac{N^\varepsilon+T \exp \left(c \frac{T}{\sqrt N}\right)}{\sqrt N}\right)$$
for all $t \in \{0,\cdots,T\}$.

Under the stronger stability assumptions \ref{ass:crucial_strong} and \ref{ass:uniform_strong} we obtain the time independent upper bound 
$$\left \| \E \left [ \left| \lambda^{N}_{t+1}-\bar \lambda^{N}_{t+1}\right |\right]\right \|_\infty \leq \frac{C}{\sqrt {N^{1-2\varepsilon}}},$$
for all $t \in \N$ on $F^N \cap E^N$. 
\end{Proposition}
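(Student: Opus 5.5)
We bound $\E[|\lambda^N_{t+1}-\bar\lambda^N_{t+1}|]$ componentwise through a Lipschitz recursion in the $\|\cdot\|_\infty$ norm. The error that is not itself recursive is controlled by passing it through the $\|\cdot\|_2$ bounds already proven (Propositions \ref{prop:lambda_hat} and \ref{prop:hat_bar}) via the two-to-infinity norm, and by Proposition \ref{prop:union_bound}.

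\emph{Decomposition.} Fix a node $i$ and $t\le T$. Since $\psi$ is $L$-Lipschitz, and using \eqref{eq:bar_broadcast} written as $\bar\lambda^N_{t+1}=\psi(\mu^N+\sum_{k}\Eb[\Phi^N_{t+1-k}]\bar\lambda^N_k)$, we get componentwise
\begin{align*}
|\lambda^N_{t+1}-\bar\lambda^N_{t+1}| \preceq L\Big(&\Big|\sum_{k=0}^{t}\Phi^N_{t+1-k}(Y^N_k-\lambda^N_k)\Big| + \sum_{k}|\Phi^N_{t+1-k}|\,|\lambda^N_k-\bar\lambda^N_k| \\
&+ \sum_k \big|(\Phi^N_{t+1-k}-\Eb[\Phi^N_{t+1-k}])\bar\lambda^N_k\big|\Big).
\end{align*}
The third term will instead be split by adding and subtracting $\hat\lambda^N$ where needed; we keep it as stated and handle it directly.

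\emph{Bounding the three terms.}
\begin{itemize}
\item First term (martingale noise). Conditionally on the past, the increments $Y^{j,N}_k-\lambda^{j,N}_k$ are centered and uncorrelated across $(j,k)$. Each entry of $\Phi^N$ is $O(1/N)$, so Cauchy--Schwarz gives a second moment of order $N^{-2}\sum_{j,k}|\phi_{t+1-k}|^2\E[\lambda^{j,N}_k]$. Bounding $\E[\lambda^{j,N}_k]$ in sup norm (a bounded quantity, or $\|\bar\lambda\|_\infty$ plus the error, absorbed later), this term is $O(N^{-1/2})$ uniformly in $i$.
\item Third term. Proposition \ref{prop:union_bound} gives, on $F^N$, a contribution $\sum_k L\max|\phi_{t+1-k}\bar\lambda_k|\,N^{-1/2+\varepsilon}$. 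Since $\bar\lambda$ is bounded (Proposition \ref{prop:cvg_bar_k}) and $\phi\in\ell_1$, this is $CN^{\varepsilon}/\sqrt N$ uniformly in time.
\item Second term. We split $|\Phi^N|=\Eb[|\Phi^N|]+(|\Phi^N|-\Eb|\Phi^N|)$.
  \begin{itemize}
  \item The expectation part, in $\|\cdot\|_\infty$, is at most the block matrix whose norm gives $\nu'$. This yields the contraction $\nu'\max_{k\le t}\|\E|\lambda_k-\bar\lambda_k|\|_\infty$, or the convolution version with $\nu'_{\mathcal S}$.
  \item The fluctuation part is handled with $\|A x\|_\infty\le\|A\|_{2\to\infty}\|x\|_2\le\|A\|_2\|x\|_2$ and Lemma \ref{lmm:vershynin}. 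Writing $|\Phi^N|-\Eb|\Phi^N|$ blockwise as $\frac1N|\phi^{\ell\to h}|(\Xi-\Eb\Xi)$ restricted to blocks, the spectral norm is $\le C|\phi_{t+1-k}|/\sqrt N$ on $E^N$.
  \item This is multiplied by $\|\E|\lambda_k-\bar\lambda_k|\|_2\le\|\E|\lambda_k-\hat\lambda_k|\|_2+\|\hat\lambda_k-\bar\lambda_k\|_2\le CT e^{cT/\sqrt N}$ by Propositions \ref{prop:lambda_hat} and \ref{prop:hat_bar}. Summing over $k$ gives $C T e^{cT/\sqrt N}/\sqrt N$, and $C/\sqrt N$ under Assumption \ref{ass:crucial_strong}.
  \end{itemize}
\end{itemize}

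\emph{Closing the recursion.} Let $u_t=\|\E|\lambda^N_t-\bar\lambda^N_t|\|_\infty$. We obtain
\[
u_{t+1}\le \nu'\max_{k\le t}u_k + C\,\frac{N^\varepsilon + T e^{cT/\sqrt N}}{\sqrt N}.
\]
Iterating only over $t\le T$ crudely, summing the inhomogeneous terms, gives the factor $T$ in the stated bound. Contraction by $\nu'<1$ would even remove it, but we only need the stated bound.

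Under the strong assumptions, we use the convolution form $u_{t+1}\le L\sum_k\|\alpha p|\phi_{t+1-k}|\|_\infty u_k+C N^{-1/2+\varepsilon}$. Since $\nu'_{\mathcal S}<1$, induction on $\sup_{k\le t}u_k$ gives $u\le C N^{-1/2+\varepsilon}/(1-\nu'_{\mathcal S})$, uniformly in time. The $N^{-1/2}$ terms are dominated by $N^{-1/2+\varepsilon}$.

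\emph{Main obstacle.} The delicate step is the fluctuation part of the second term: $|\Phi^N|$ (equivalently the recentered kernel) acting on the error vector in sup norm. It must be routed through the two-to-infinity norm so that only the $\|\cdot\|_2$ error enters, since otherwise the $O(1)$ $\infty$-norm of $\Xi-\Eb\Xi$ scaled by $1/N$ would not be small. A further difficulty is the blockwise spectral bound when different blocks carry different kernels. We handle it by bounding each of the $q^2$ block-restricted submatrices of $\Xi-\Eb\Xi$ by $\|\Xi-\Eb\Xi\|_2$, at the cost of a factor $q^2$ absorbed into $C$.
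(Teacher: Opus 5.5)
Your argument is correct, but it is organised differently from the paper's.

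\emph{The paper's route.} It never writes a recursion for $\lambda^N-\bar\lambda^N$ itself. It bounds $\|\E[|\lambda^N_{t+1}-\hat\lambda^N_{t+1}|]\|_\infty$ and $\|\hat\lambda^N_{t+1}-\bar\lambda^N_{t+1}\|_\infty$ separately. In each case it splits $\Phi^N=\Eb[\Phi^N]+(\Phi^N-\Eb[\Phi^N])$ and inverts the mean part through the resolvent $R=\sum_{m\ge 1}\Eb[\Phi^N]^{(*m)}$ of Lemma \ref{lmm:convolution}. The fluctuation part goes through $\|(\delta^N_0+R)*(\Phi^N-\Eb[\Phi^N])\|_{2\to\infty}$ against the $\ell_2$ bounds of Propositions \ref{prop:lambda_hat} and \ref{prop:hat_bar}. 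For the second piece it writes $\hat\lambda^N_k=\bar\lambda^N_k+(\hat\lambda^N_k-\bar\lambda^N_k)$ and applies Proposition \ref{prop:union_bound} to the first summand. Its outer factor $T$ comes from crudely summing $\|(\delta^N_0+R)_{t-k}\|_\infty\le C$ over $k$.

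\emph{Your route.} You use the same three ingredients: two-to-infinity routing, the $\ell_2$ propositions, and Proposition \ref{prop:union_bound}. You put them inside one sup-norm recursion, closed by $m_{t+1}\le\max(m_t,\nu' m_t+B)$ with $m_t=\max_{k\le t}u_k$. This has three advantages:
\begin{itemize}
\item it avoids the resolvent;
\item it gives the sharper bound $C(N^\varepsilon+Te^{cT/\sqrt N})/\sqrt N$;
\item in the time-uniform regime it needs only $\nu'<1$ plus Assumption \ref{ass:crucial_strong} (to make the $\ell_2$ inputs uniform in time), not $\nu'_{\mathcal S}<1$.
\end{itemize}

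\emph{What the paper's route buys, and one step to make explicit.} The paper's Poisson noise is centred at the deterministic $\hat\lambda^N$, since it reuses the inequality from the proof of Proposition \ref{prop:lambda_hat}. So it needs no control of $\E[\lambda^{j,N}_k]$. In your version the noise is centred at the random $\lambda^N$, and its variance involves $\E[\lambda^{j,N}_k]\le\|\bar\lambda\|_\infty+u_k$. You should make the absorption explicit. One way is $CN^{-1/2}\sqrt{m_t}\le\eta m_t+C^2/(4\eta N)$ with $\eta=(1-\nu')/2$. Alternatively, bound $\sum_j\E[\lambda^{j,N}_k]\le\sqrt N\,\|\E[\lambda^N_k]\|_2$ using the $\ell_2$ propositions.
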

\begin{proof}
    The proof can be found in Section \ref{proof:intensity_uniform}.
\end{proof}
Despite the fact that under the weaker stability Assumptions \ref{ass: crucial} and \ref{ass:uniform} the mean field approximation is not guaranteed to work unless $N\gg T^4$, this remains clearly better than the guarantee offered by Gr\"onwall's inequality which requires $N\gg \mathrm e^{cT}$.

We conclude this section by giving the upper bound on the uniform approximation of the count series themselves. 

\begin{Theorem}
\label{thm:main_uniform}
    Let $T \in \N^*$ be the time horizon. Fix an $\varepsilon \in (0,1/2)$ and let $N\geq \left[\ln (8/3)/(2 \alpha_{\min})\right]^{1/(2\varepsilon)}$ be the network size. If Assumptions  \ref{ass: crucial} and \ref{ass:uniform} hold, then on the event $E^N \cap F^N$, whose probability $\Pb$ is larger than $1-4Nq \exp(-2\alpha_{\max}N^{2\varepsilon})-2\exp(-N)$, we have that  
$$\max_{i\in \{1,\cdots,N\}} \E \left [ \left| Y^{i,N}_{t+1}-\bar Y^{i,N}_{t+1}\right |\right] \leq C T\left(\frac{N^\varepsilon+T \exp \left(c \frac{T}{\sqrt N}\right)}{\sqrt N}\right)$$
for all $t \in \{0,\cdots,T\}$.

Under the stronger stability Assumptions \ref{ass:crucial_strong} and \ref{ass:uniform_strong} we obtain the time independent upper bound 
$$\max_{i \in \{1,\cdots,N\}} \E \left [ \left| Y^{N}_{t+1}-\bar Y^{N}_{t+1}\right |\right] \leq \frac{C}{\sqrt {N^{1-2\varepsilon}}},$$
for all $t \in \N$ on $F^N \cap E^N$. 
\end{Theorem}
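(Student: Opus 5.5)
The plan is to reduce Theorem \ref{thm:main_uniform} to Proposition \ref{prop:intensity_uniform} via the common coupling through the driving family $\Pi$. Work on the event $E^N\cap F^N$ and fix the graph realisation; then $\bar\lambda^N$ is deterministic and $\lambda^N$ is random only through $\Pi$. For each node $i$ and time $t+1$ we have $Y^{i,N}_{t+1}=\Pi^i_{t+1}(\lambda^{i,N}_{t+1})$ and $\bar Y^{i,N}_{t+1}=\Pi^i_{t+1}(\bar\lambda^{i,N}_{t+1})$, both thinned from the same unit-rate Poisson process $\Pi^i_{t+1}$. Since $\Pi^i_{t+1}$ is a counting process, $|\Pi^i_{t+1}(a)-\Pi^i_{t+1}(b)|$ is exactly the number of points of $\Pi^i_{t+1}$ in the interval between $\min(a,b)$ and $\max(a,b)$.

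The key step is to take expectations using independence. The intensity $\lambda^{i,N}_{t+1}$ is measurable with respect to $\mathcal F_t$, i.e.\ it depends only on $\Pi_0,\dots,\Pi_t$, while $\Pi^i_{t+1}$ is independent of $\mathcal F_t$. Conditioning on $\mathcal F_t$, the number of points of a unit-rate Poisson process in an interval whose endpoints are $\mathcal F_t$-measurable has conditional mean equal to the interval's length. Hence
\begin{equation*}
\E\left[\left|Y^{i,N}_{t+1}-\bar Y^{i,N}_{t+1}\right|\,\Big|\,\mathcal F_t\right]=\left|\lambda^{i,N}_{t+1}-\bar\lambda^{i,N}_{t+1}\right|,
\end{equation*}
and taking expectations gives $\E[|Y^{i,N}_{t+1}-\bar Y^{i,N}_{t+1}|]=\E[|\lambda^{i,N}_{t+1}-\bar\lambda^{i,N}_{t+1}|]$. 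This is an equality, not merely a bound. The same identity was presumably the basis of Theorem \ref{thm:main_l_2}.

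Finally, take the maximum over $i$. This gives $\max_i \E[|Y^{i,N}_{t+1}-\bar Y^{i,N}_{t+1}|]=\|\E[|\lambda^N_{t+1}-\bar\lambda^N_{t+1}|]\|_\infty$. We then plug in Proposition \ref{prop:intensity_uniform}: under Assumptions \ref{ass: crucial} and \ref{ass:uniform} for $t\in\{0,\dots,T\}$, and under Assumptions \ref{ass:crucial_strong} and \ref{ass:uniform_strong} for all $t\in\N$. The constants, the lower bound on $N$, and the event $E^N\cap F^N$ together with its probability bound are inherited verbatim.

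There is no real obstacle. The only point requiring care is the measurability and independence argument, namely that $\lambda^{N}_{t+1}$ uses only $\Pi_0,\dots,\Pi_t$. This follows directly from the recursion \eqref{eq: def_pois_auto}, since the sum runs over $k\le t$. We also need the fact that, in the quenched setting on $E^N\cap F^N$, the graph is fixed, so $\E$ acts only over $\Pi$.
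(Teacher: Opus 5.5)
Your proposal is correct and follows the route the paper intends. Its omitted proof transfers the intensity bound to the counts through the common thinning by $\Pi$, using $\E\bigl[|Y^{i,N}_{t+1}-\bar Y^{i,N}_{t+1}|\bigr]\le \E\bigl[|\lambda^{i,N}_{t+1}-\bar\lambda^{i,N}_{t+1}|\bigr]$ (which you rightly sharpen to an equality by conditioning on $\mathcal F_t$), then takes the maximum over $i$ and applies Proposition~\ref{prop:intensity_uniform}. As you also saw, the Cauchy--Schwarz and $\hat\lambda^N$ triangle-inequality steps from Theorem~\ref{thm:main_l_2} are unnecessary here, because Proposition~\ref{prop:intensity_uniform} already compares $\lambda^N$ and $\bar\lambda^N$ directly in $\|\cdot\|_\infty$.
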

\begin{proof}
    The proof is analogous to that of Theorem \ref{thm:main_l_2} with the use of Proposition \ref{prop:intensity_uniform} instead of Propositions \ref{prop:lambda_hat} and \ref{prop:hat_bar}, and shall therefore be omitted.
\end{proof}

\section{Verification and analysis of convergence results via simulations}
\label{sec:verification}
In this section, we provide an empirical illustration of the theoretical results obtained in Section \ref{sec:convergence-N}. We show how the auxiliary process $\widehat{\lambda}$, the true underlying process $\lambda$ and the mean-field process $\bar{\lambda}$ evolve over time in example community structures. Furthermore, we demonstrate how the error in the mean-field approximation follows the derived theoretical rates. 

\subsection{Simulation setup}

We simulate interaction dynamics on two networks of different sizes (500 and 5000 nodes), each divided into three communities based on predefined proportions $\alpha=[0.5,0.3,0.2]$. We refer to the first community as $C(1)$, the second community as $C(2)$ and the third community as $C(3)$. 

Two types of stochastic block model (SBM) structures are examined: an assortative SBM, where nodes are more likely to connect within their community, and a dissociative SBM, where connections are more likely between different communities. The specific probability matrices that we used are
\[
P_{\mathrm{assortative}}=
\begin{pmatrix}
0.9 & 0.2 & 0.1\\
0.2 & 0.9 & 0.3\\
0.1 & 0.3 & 0.9
\end{pmatrix}
\qquad\text{and}\qquad
P_{\mathrm{dissociative}}=
\begin{pmatrix}
0.1 & 0.9 & 0.7\\
0.9 & 0.1 & 0.9\\
0.7 & 0.9 & 0.1
\end{pmatrix}.
\]
Figure~\ref{fig:adj_matrices} shows an example realisation of the assortative and dissociative SBMs with these probability matrices, with darker color indicating more connections within that specific region. The interaction dynamics are governed by different temporal kernels assigned to each community as outlined below,
\[
\phi^{C(1)\to C(1)}_t=\phi^{C(1)\to C(2)}_t=\phi^{C(1)\to C(3)}_t=0.9e^{-0.5t},
\]
\[
\phi^{C(2)\to C(1)}_t=\phi^{C(2)\to C(2)}_t=\phi^{C(2)\to C(3)}_t=0.5te^{-t^2},
\]
\[
\phi^{C(3)\to C(1)}_t=\phi^{C(3)\to C(2)}_t=\phi^{C(3)\to C(3)}_t=\frac{0.9}{1+t^{2.5}}.
\]

\begin{figure}
    \centering
    \includegraphics[width=0.8\linewidth]{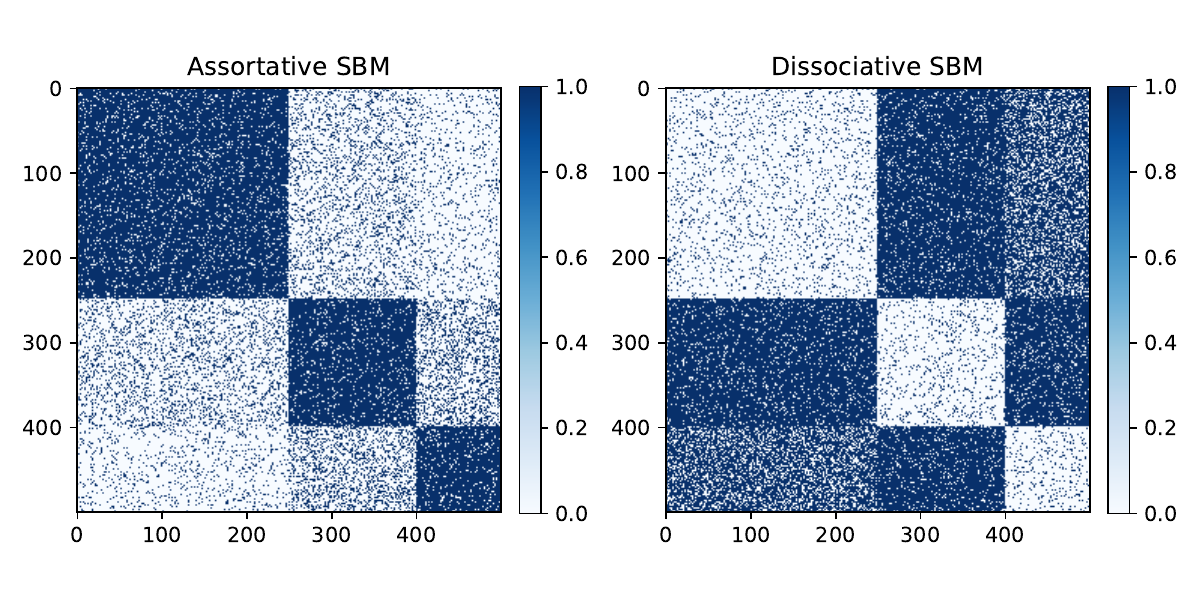}
    \caption{Visualization of the adjacency matrices generated according to the SBM framework, for the assortative example (to the left) and dissociative example (to the right}
    \label{fig:adj_matrices}
\end{figure}

Community $C(1)$ follows an exponential kernel, characterized by a rapid and memoryless decay in which the impact of past events diminishes uniformly; $C(2)$ uses a Rayleigh kernel, which models interactions that are most likely to occur after a short delay, peaking at an intermediate time before decaying; $C(3)$ follows a power law kernel, which decays slowly and allows past events to exert influence far into the future, reflecting long-term memory. These kernels are shown in Figure~\ref{fig:temporal_kernels}. This heterogeneous assignment of temporal kernels allows each community to exhibit distinct dynamical behaviour, and the model design facilitates the study of how temporal dynamics interact with structural patterns in both assortative and dissociative network regimes.

\begin{figure}
    \centering
    \includegraphics[width=\linewidth]{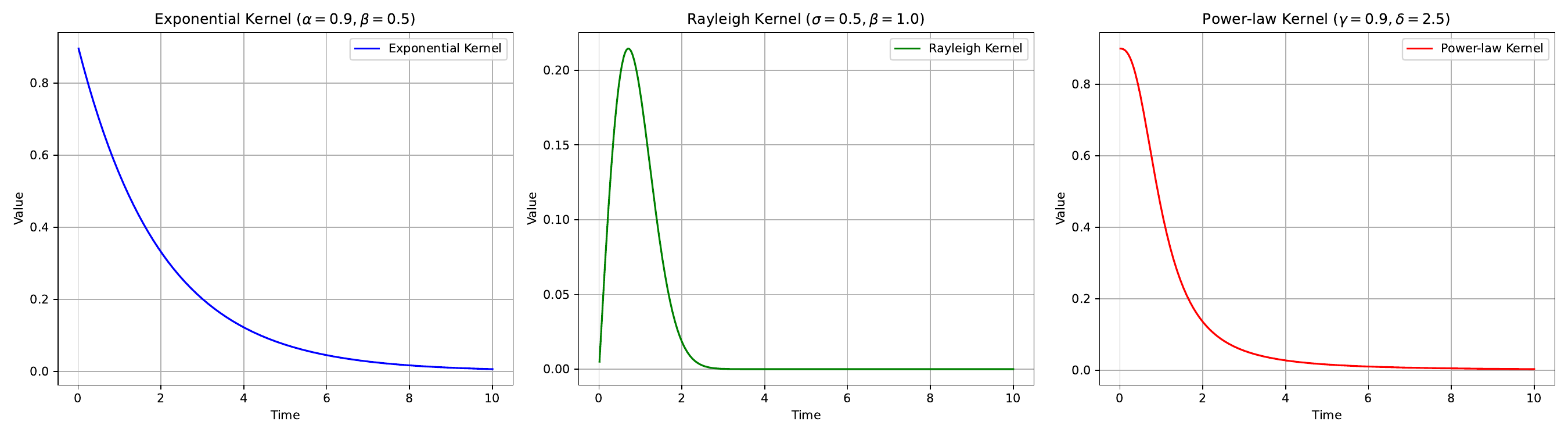}
    \caption{Visualization of the three temporal kernels used in the model. The first panel shows the exponential kernel with parameters \( [0.9,\ 0.5] \), defined as \( \alpha e^{-\beta t} \). The second panel shows the Rayleigh kernel with parameters \( [0.5,\ 1.0] \), given by \( \alpha t e^{-\beta t^2} \). The third panel displays the power-law kernel with parameters \( [0.9,\ 2.5] \), given by \( \alpha / (1 + t^\beta) \).}
    \label{fig:temporal_kernels}
\end{figure}

Figure~\ref{fig:lambdas_sbm} illustrates the evolution of the true intensity $\lambda$, auxiliary process intensity $\widehat{\lambda}$, and mean-field intensity $\bar{\lambda}$ over time across communities and network structures. In all cases, the true intensity is scattered around the auxiliary process intensity (solid lines), and for the larger network the distance between the true intensity and the auxiliary process is much smaller than for the smaller network. For the large network both the true intensity and the auxiliary process intensity align more closely with the mean-field approximation (dotted lines), reflecting the increased average effect at scale. In the assortative SBM (top row), the mean-field approximation is most accurate for the largest community $C(1)$, where the intensity is also the highest; this is consistent with the exponential kernel's sustained self-excitation and high intra-community connectivity. The intensities for $C(2)$ and $C(3)$ are similar in magnitude, with the mean-field approximation slightly more accurate for $C(3)$, likely due to the power law kernel's lower but smoother decay, which introduces less variance than the Rayleigh kernel in $C(2)$.

In the dissociative SBM (bottom row), the true and auxiliary intensities are notably closer to the mean-field across all communities. This improvement is likely due to the broader, more evenly distributed influence from cross-community connections, which enhances the averaging effect, particularly in $C(2)$ and $C(3)$. Consequently, intensity levels rise for $C(2)$ and $C(3)$ compared to the assortative case, as they now receive input from a larger portion of the network. Additionally, the dissociative structure appears to homogenize the temporal dynamics across communities, reducing the distinction between their kernel-induced behaviours.

\begin{figure}[htbp]
    \centering

    \begin{subfigure}[b]{1\textwidth}
        \centering
        \includegraphics[width=\textwidth]{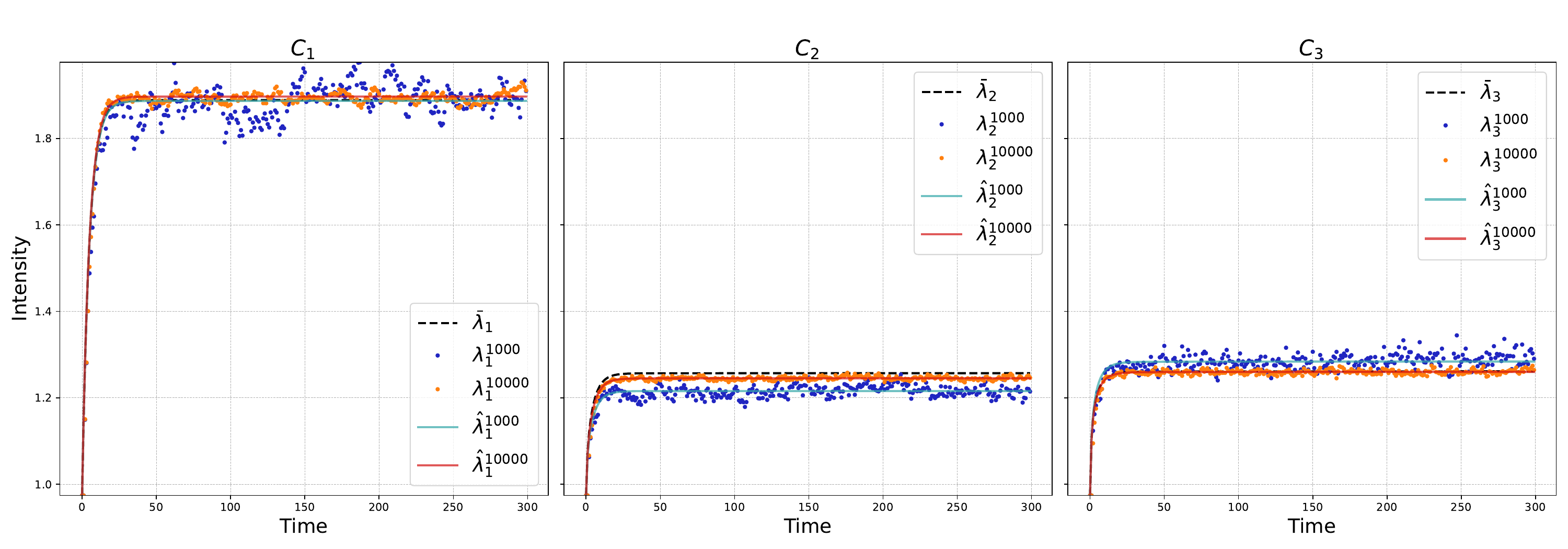}
        \caption{Assortative SBM}
        \label{fig:lambda_assortative}
    \end{subfigure}

     \vspace{1em}

    \begin{subfigure}[b]{1\textwidth}
        \centering
        \includegraphics[width=\textwidth]{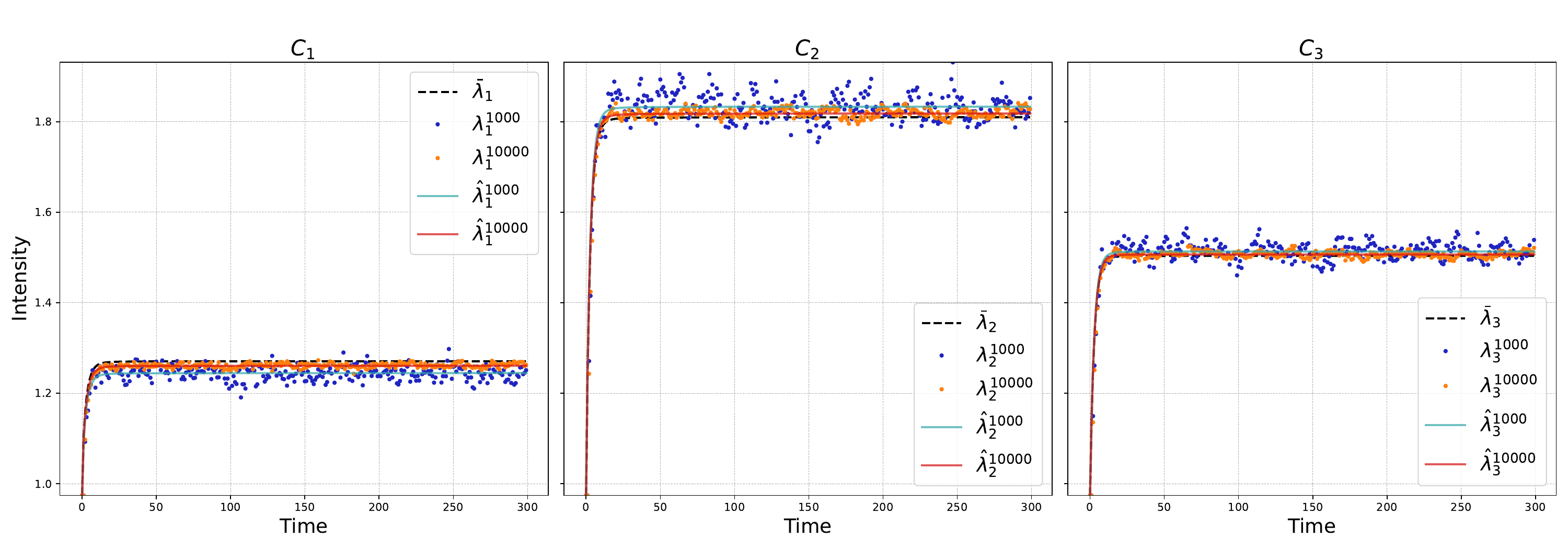}
        \caption{Dissociative SBM}
        \label{fig:lambda_dissociative}
    \end{subfigure}

    \caption{Evolution of the auxiliary process intensity $\hat{\lambda}$, the true underlying process intensity $\lambda$, and the mean-field intensity $\bar{\lambda}$ on the log scale over time, shown separately for the dissociative and assortative SBM structures. The plots illustrate how structural differences in connectivity influence the convergence and spread of the dynamic processes across the network.}
    \label{fig:lambdas_sbm}
\end{figure}

\subsection{Dependence on the number of nodes $N$}

Next, we examine the behavior of the errors with increasing number of nodes $N$ across multiple graph realizations. Figure~\ref{fig:mean-distances-log-n} reports the mean distances between the true counts and counts from the mean-field process, as well as the true intensity and the mean-field intensity, verifying the results of Theorem \ref{thm:main_l_2}. Each line corresponds to a different graph realization, and the results are averaged over 100 different runs. For both comparisons, the errors decrease systematically as the network size increases. The behavior is consistent across the different graph realizations, with only limited variability between instances. On the log--log scale, the observed decay is broadly consistent with the reference slope $-0.5$, in agreement with the canonical $N^{-1/2}$ convergence rate obtained in Theorem \ref{thm:main_l_2}. This shows that the approximation becomes accurate when averaging over nodes as the size of the network increases.

\begin{figure}[htbp]
    \centering
    \includegraphics[width=\textwidth]{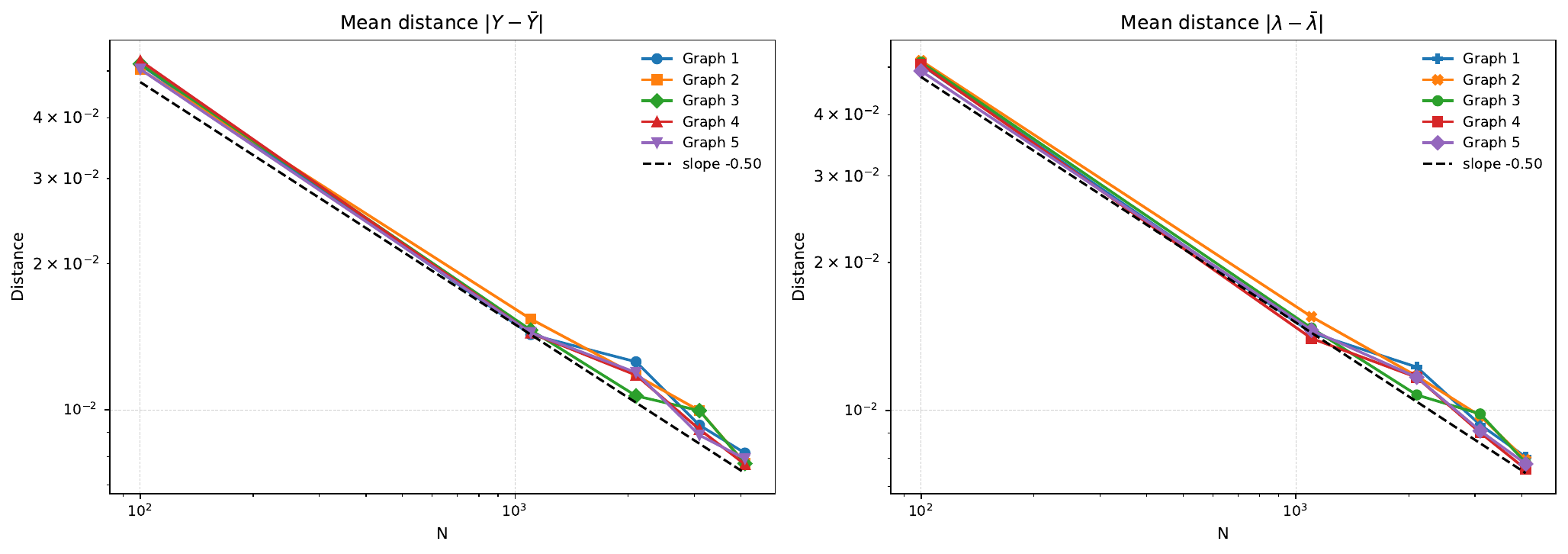}
    \caption{Mean distances as a function of the number of nodes $N$, averaged over 100 runs, for five graph realisations. The left panel shows the mean distance $|Y-\bar{Y}|$ between the true and mean-field processes, and the right panel shows the mean distance $|\lambda-\bar{\lambda}|$. The dashed line gives the reference slope $-0.5$.}
    \label{fig:mean-distances-log-n}
\end{figure}

Figure~\ref{fig:max-distances-log-n} reports the corresponding maximum distances and therefore provides a numerical illustration of the uniform convergence results in Theorem \ref{thm:main_uniform}. As expected, the maximum distances display more variability across graph realizations than the mean distances. Nevertheless, the discrepancies show an overall decreasing trend as $N$ grows. Their behavior is again broadly consistent with the reference slope $-0.5$. These results support the fact that the mean-field approximation does not only perform well on average, but also captures the behaviour of individual nodes increasingly well in larger networks.

\begin{figure}[htbp]
    \centering
    \includegraphics[width=\textwidth]{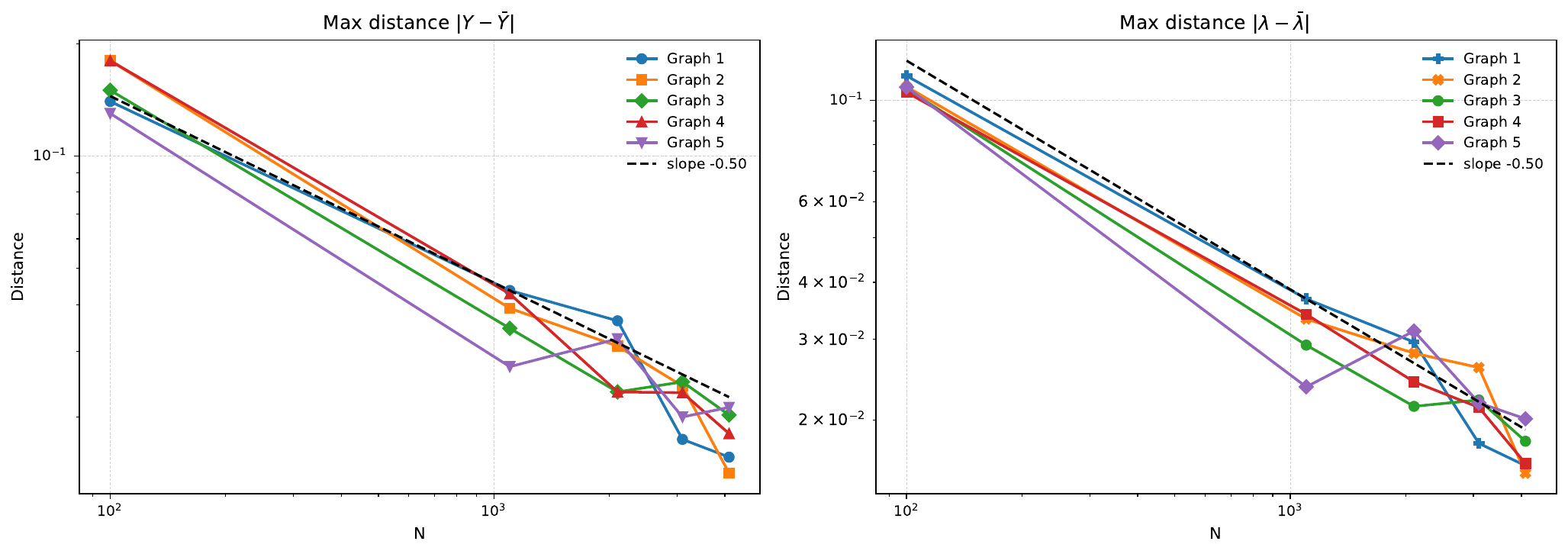}
    \caption{Maximum distances as a function of the number of nodes $N$, averaged over 100 runs, for five graph realisations. The left panel shows the maximum distance $|Y-\bar{Y}|$ between the true and mean-field processes, and the right panel shows the maximum distance $|\lambda-\bar{\lambda}|$. The dashed line gives the reference slope $-0.5$.}
    \label{fig:max-distances-log-n}
\end{figure}

Simulations demonstrating the intermediary convergence results for the true and auxiliary intensities, and the auxiliary and mean-field intensities, are presented in Appendix \ref{sec:add_sims}.

\subsection{Dependence on the time horizon $T$}

Finally, we examine the behaviour of the errors as the time horizon $T$ increases, for a fixed network size. The  results are reported in Appendix \ref{sec:add_sims}. In keeping with the results of Theorem \ref{thm:main_l_2} and Theorem \ref{thm:main_uniform}, over the range of time horizons considered, Euclidean and uniform errors remain of comparable magnitude across the different graph realisations and do not show systematic deterioration. 

%
%
%

\section{Application: Mean-field approach to the parametric inference for linear Poisson autoregressions}
\label{sec:application}
We consider the task of estimating the parameters of a Poisson autoregression on a large network with a stochastic block model structure, leveraging the presented asymptotic mean-field results for significant gains in computational efficiency over the exact likelihood approach. For simplicity, we provide details for the linear Poisson autoregression ($\psi(x)=x$). We consider the general case where the unknown parameters of the model are $\btheta = [\bmu^T,\btheta_\phi^T]^T$, where $\bmu = [\mu_1,...,\mu_q]^T$ and $\btheta_\phi$ is the vector of parameters for the kernels $\{\phi_{t}^{l\rightarrow h}\}_{l,h}$.

The primary assumption for our approach is that the process is in the mean-field state and can therefore be approximated as an inhomogeneous Poisson process with a node belonging to network community $h\in\{1,...,q\}$ having deterministic intensity
$$
\balambda_{t+1}^h =  \mu_h + \sum_{l=1}^q\alpha_l p_{l\rightarrow h}\sum_{k=1}^t\phi_{t+1-k}^{l\rightarrow h} \balambda_k^l \quad \text{for } t\geq 1,
$$
where $\balambda_1^h = \mu^h$. In this setting, the log-likelihood is
	\begin{align*}
	\ell(\btheta)  = \sum_{i=1}^N\sum_{t=1}^T Y_t^i\log(\bar\lambda_t^{C(i)}) - \lambda_t^{C(i)} & = \sum_{h = 1}^q\sum_{i=1}^N \sum_{t=1}^T \mathbb{I}[C(i)=h] \left(Y_t^i\log(\bar\lambda_t^{C(i)}) - \bar\lambda_t^{C(i)}\right)\\
	& = \sum_{h = 1}^q\sum_{t=1}^T \mathcal{Y}_t^h\log(\bar\lambda_t^{h}) - N_h\bar\lambda_t^{h},
	\end{align*}
	where $\mathcal{Y}_t^h = \sum_{i=1}^N\mathbb{I}[C(i) = h]Y_t^i$ is the total counts at time $t$ for community $h$. This can be optimized via gradient descent where $\nabla_\btheta\ell(\btheta) = \sum_{h = 1}^q\sum_{t=1}^T [(\mathcal{Y}_t^h/\bar\lambda_t^{h}) - N_h]\nabla_\btheta\bar\lambda_t^{h}$.

\subsection{Direct estimation of the kernel}
\subsubsection{Formulation}
The first scenario we consider is the direct estimation of each element of the kernels $\{\phi^{l\rightarrow h}\}_{l,h=1}^q$. Here, we naturally have to assume all kernels have finite support (a finite maximal lag), such that $\btheta_\phi$ is a vector of length $\sum_{l,h=1}^q K_{lh}$, where $K_{lh}$ is the finite number of non-zero elements in $\phi^{l\rightarrow h}$ to be estimated. To optimize the likelihood, note
$$
\balambda_{t+1}^h =  \mu^h + \sum_{l=1}^q\alpha^l p^{l\rightarrow h}\sum_{k=1}^{\min\{t,K_{lh}\}}\phi_{t+1-k}^{l\rightarrow h} \balambda_k^l \quad \text{for } t\geq 1,
$$
and
$$
\nabla_\btheta\balambda_{t+1}^h =  \nabla_\btheta \mu^h + \sum_{l=1}^q\sum_{k=1}^{\min\{t,K_{lh}\}} \Big\{\balambda_k^l\nabla_\btheta\left(\alpha^l p^{l\rightarrow h}\phi_{t+1-k}^{l\rightarrow h}\right)  + \alpha^l p^{l\rightarrow h}\phi_{t+1-k}^{l\rightarrow h}\nabla_\btheta \balambda_k^l\Big\} \quad \text{for } t\geq 1.
$$
Population parameters $\alpha^\ell$ and $p^{\ell \to h}$ can be replaced with the plug-in estimators $\hat{\alpha}^{\ell} = N^\ell / N$ and $\hat{p}^{\ell \to h} = \sum_{i : C(i) = \ell}\sum_{j : C(j) = h}\xi^{j \to i} / N^\ell N^h.$ From which, we can then compute the relevant gradients for $t =1, \dots, T-1$ as
\begin{align*}
    \pdv{\balambda^h_{t+1}}{\mu^r} &= \mathbb{I}\{r=h\} + \sum_{\ell=1}^q\hat{\alpha}^\ell \hat{p}^{\ell \to h}\sum_{k=1}^{\min\{t,K_{\ell h}\}}\phi_{t+1-k}^{\ell \to h}\pdv{\balambda^\ell_{t+1-k}}{\mu^r},\\
    \pdv{\balambda^h_{t+1}}{\phi_m^{s \to r}} &= \hat{\alpha}^s \hat{p}^{s \to h} \balambda_{t+1-m}^{s}\mathbb{I}\{m \leq t, r=h\} + \sum_{\ell=1}^q \hat{\alpha}^\ell \hat{p}^{\ell \to h} \sum_{k=1}^{\min\{t,K_{\ell h}\}}\phi_{k}^{\ell \to h}\pdv{\balambda^h_{t+1-k}}{\phi_m^{s \to r}}.
\end{align*}
with $\partial \balambda_1^h / \partial \mu^r = \mathbb{I}\{r = h\}$ and $\partial\balambda_1^h /\partial \phi_m^{s \to r} = 0$.

\subsubsection{Simulations}
Consider a $q=2$ group structure for $N=500$ nodes, with $\alpha^1=\alpha^2=1/2$ giving an equal split between the two groups. The probability matrix is set to be
\begin{equation*}
 P\equiv   \begin{pmatrix}
        p^{1\to 1} & p^{1\to 2}\\
        p^{2\to 1} & p^{2 \to 2}
    \end{pmatrix} = 
    \begin{pmatrix}
        0.9 & 0.8 \\
        0.85 & 0.95
    \end{pmatrix}.
\end{equation*}
For the Poisson autoregression, the vector of baseline intensities is set to $\bmu^T = (0.8, 0.25)$, and we consider a lag of $K_{lh} = 1$ for all $(l,h)\in\{1,2\}^2$, setting
\begin{equation*}
    \begin{pmatrix}
        \phi_1^{1 \to 1} & \phi_1^{1 \to 2} \\
        \phi_1^{2 \to 1} & \phi_1^{2 \to 2}
    \end{pmatrix} 
    = 
    \begin{pmatrix}
        2.5e^{-1.4} & 1.5e^{-0.7}\\
        0.6 e^{-1.15} & e^{-2}
    \end{pmatrix}
    \approx
        \begin{pmatrix}
        0.617 & 0.745\\
        0.190 & 0.135
    \end{pmatrix}.
\end{equation*}
 Over 1000 repeat simulations, the baseline $\bmu^\top$ was estimated with a mean of $(0.800, 0.250)$ and standard deviations $(0.026, 0.015)$. For $\{\phi_1^{r \to s}\}$, we obtain a mean of
\begin{equation*}
\begin{pmatrix}
    0.409 & 0.206 \\
    1.404 & 1.661
\end{pmatrix},
\end{equation*}
with a standard deviation of
\begin{equation*}
\begin{pmatrix}
    0.186 & 0.223 \\
    0.223 & 0.240
\end{pmatrix}.
\end{equation*}
While recovery of the kernel parameters appears poor, the mean-field intensity functions $\balambda^{1}_t$ and $\balambda^2_t$ are recovered with high accuracy and precision. The intensity functions averaged over $t$ are $2.453$ and $2.065$, for groups 1 and 2, respectively. When averaged over time in the same way, the simulations give a mean of $2.460$ and $2.058$, with 95\% simulation envelopes $(2.415, 2.564)$ and $(1.957,2.125)$. This points to underlying structural identifiability issues with the model, where many possible combinations of parameters give rise to similar intensity profiles.

\subsection{Estimation for a basis representation of the kernel}

\subsubsection{Formulation}

An alternative approach, which does not need the assumption of a maximal lag, considers $p$ basis functions for the kernel, shared across time and groups. We assume our kernels to be $\phi_t^{\ell \to h} = \sum_{m=1}^p \gamma^{\ell \to h}_m\exp\{-mt/\tau\}$, with $\tau$ a known, positive integer. The intensity function of node $i$ is then
\begin{equation}
    \lambda_{t+1}^i = \mu^{C(i)} + \frac{1}{N}\sum_{j=1}^N\xi^{j \to i}\sum_{k=1}^t\sum_{m=1}^{p}\gamma_m^{C(j) \to C(i)}\exp\{-m(t+1-k)/\tau\}Y_k^j, 
    \label{eqn:intensity_exp_basis}
\end{equation}
again with $Y_0^i \equiv 0$, and the mean-field intensity becomes
\begin{equation}
    \balambda_{t+1}^h = \mu^h + \sum_{\ell=1}^q\alpha^\ell p^{\ell \to h}\sum_{k=1}^t \sum_{m=1}^{p}\gamma_m^{\ell \to h}\exp\{-m(t+1-k)/\tau\} \balambda_k^\ell,
    \label{eqn:intensity_mean_field}
\end{equation}
with $\balambda_1^h = \mu^h$. The parameter vector $\btheta_\phi$ now contains the weights $\left\{\gamma_m^{l\rightarrow h}\right\}_{(m,l,h)\in\{1,...,p\}\times \{1,...,q\}^2}$.

To simplify inference, we perform gradient ascent only with respect to $\gamma_m^{\ell \to h}$. The parameter $\mu^h$ is estimated separately using the identity $\lambda_1^h = \mu^h$, which yields the unbiased estimator
\begin{equation}
\hat{\mu}^h = \frac{1}{N_h}\sum_{i : C(i) = h} Y_1^i.
\label{eqn:mu_hat}
\end{equation}
Gradients with respect to $\gamma_m^{\ell \to h}$ can then be computed as
\begin{align}
    \pdv{\balambda_{t+1}^h}{\gamma_n^{s \to r}} &= \mathbb{I}\{h=r\}\alpha^s p^{s \to r} \sum_{k=1}^t\exp\{-n(t+1-k)/\tau\}\balambda_k^s\notag \\
    &+ \sum_{\ell = 1}^q \alpha^\ell p^{\ell \to h} \sum_{k=1}^t\sum_{m=1}^p\gamma_m^{\ell \to h} \exp\{-m(t+1-k)/\tau\}\pdv{\balambda_k^\ell}{\gamma_n^{s \to r}},
    \label{eqn:gamma_grad_exp}
\end{align}
and a gradient ascent procedure is implemented with the plug-in estimators $\hat{\mu}^\ell$, $\hat{\alpha}^\ell$ and $\hat{p}^{\ell \to h}$ (for $(\ell,h)\in\{1,2\}^2$).

\subsubsection{Simulations}
Consider a $q=2$ group structure on network with $N=2500$ nodes, with $\alpha^1=\alpha^2=1/2$ giving an equal split between the two groups. We consider a dense network, setting 
\[
p = \begin{pmatrix}
     0.9 & 0.8 \\
     0.85 & 0.95
\end{pmatrix},
\]
with $T=100$. We use $p=3$ basis functions for the kernels with coefficients $\gamma^{\ell \to h} = (\gamma_1^{\ell \to h}, \gamma_2^{\ell \to h}, \gamma_3^{\ell \to h})^T$ given as $\gamma^{0 \to 0} = (1.5, 0.4, 0.2)^T$, $\gamma^{0 \to 1} = (1.1, 0.2, 0.1)^T$, $\gamma^{1 \to 0} = (1.2, 0.25, 0.1)^T$, and $\gamma^{1 \to 1} = (2.0, 0.3, 0.15)^T$. We set the baseline intensity parameters to be $\mu^1 = 0.8$ and $\mu^2=0.25$.

A multi-start gradient-ascent scheme is initialized using a data-driven anchor for the first basis coefficient and randomized values 
for the remaining coefficients. For each community pair $(\ell,h)\in \{1,2\}^2$, we first estimate a one-step kernel coefficient $\phi^{\ell\to h}_{1}$ by nonnegative least squares using the counts from the $t=1$ to $t=2$ transition. This is then mapped to the first exponential-basis coefficient via $\gamma^{\ell\to h}_{1}=\phi^{s\to r}_{1}/\exp(-1)$, clipped to $[\gamma_{\min},\gamma_{\max}]$ to avoid degenerate starts. 

For $m\ge 2$, $\gamma^{\ell \to h}_{m}$ is sampled from $N(0,\;(\kappa\,\gamma^{\ell\to h}_{1})^{2})$, clipped at each draw to $|\gamma^{\ell \to h}_{m}|\le \eta\,\gamma^{\ell \to h}_{1}$, and, if necessary, rescaled the coefficient vector such that $\sum_{m=1}^{p}\gamma^{s\to r}_{m}>\varepsilon$ at initialization, giving a valid kernel at $t=0$. This procedure gives diverse but feasible initial starting points, while retaining a data-informed scale through $\gamma_{1}$. A parameter estimate is computed as the median value from 5000 different start points. 

Figures (\ref{fig:inferred_gamma_median}) and (\ref{fig:inferred_kernel_median}) show that while there is substantial variance in the estimator for the basis coefficients, particularly basis functions 2 and 3, the kernel is still estimated to a high-level of precision. Again, this is in part due to a structural identifiability issue, with many possible basis combinations giving approximately the same kernel.

\begin{figure}
    \centering
    \includegraphics[width=0.75\linewidth]{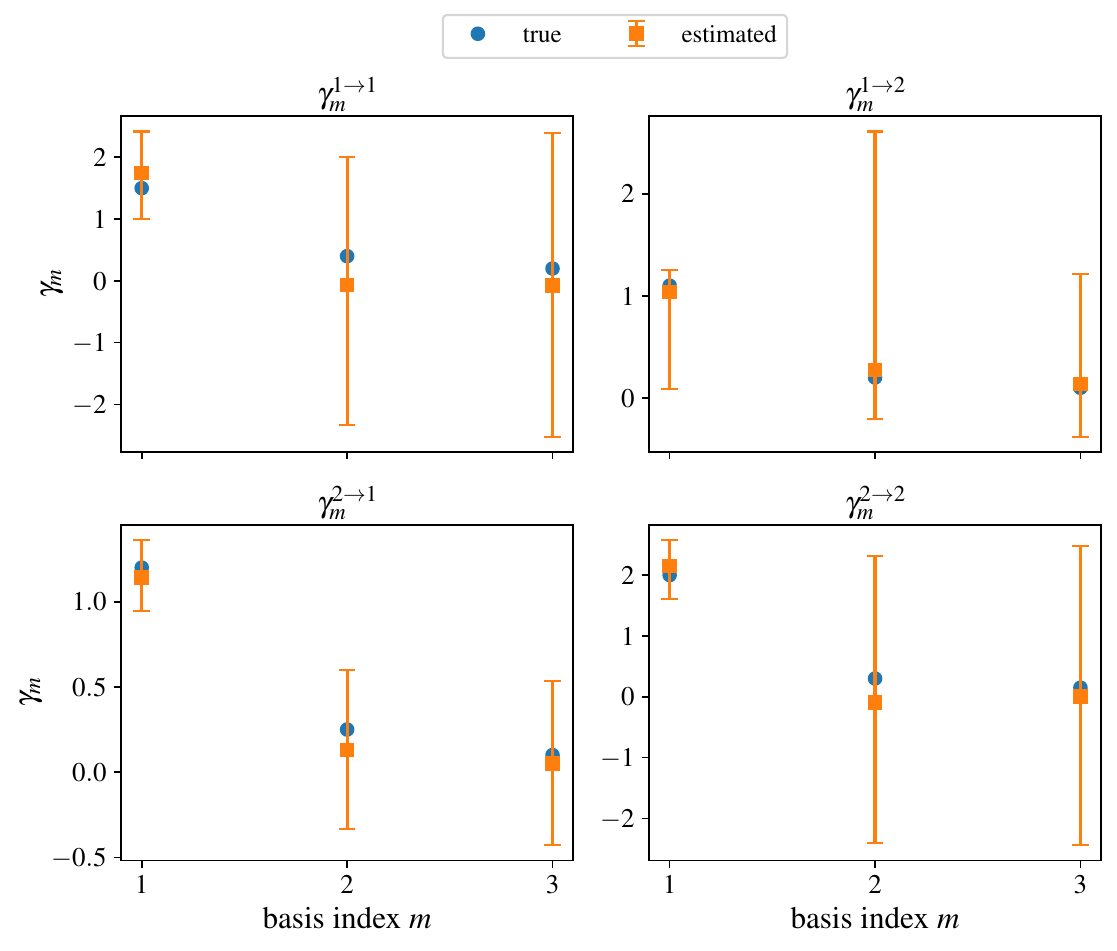}
    \caption{Recovery of basis coefficients $\{\gamma_m^{\ell \to h}\}$ by community pair. Marker locations denote the selected point estimate obtained from the median over selected runs, vertical error bars show the empirical 5th–95th percentile range across valid random-start runs, and circles indicate the true coefficients.}
    \label{fig:inferred_gamma_median}
\end{figure}

\begin{figure}
    \centering
    \includegraphics[width=0.75\linewidth]{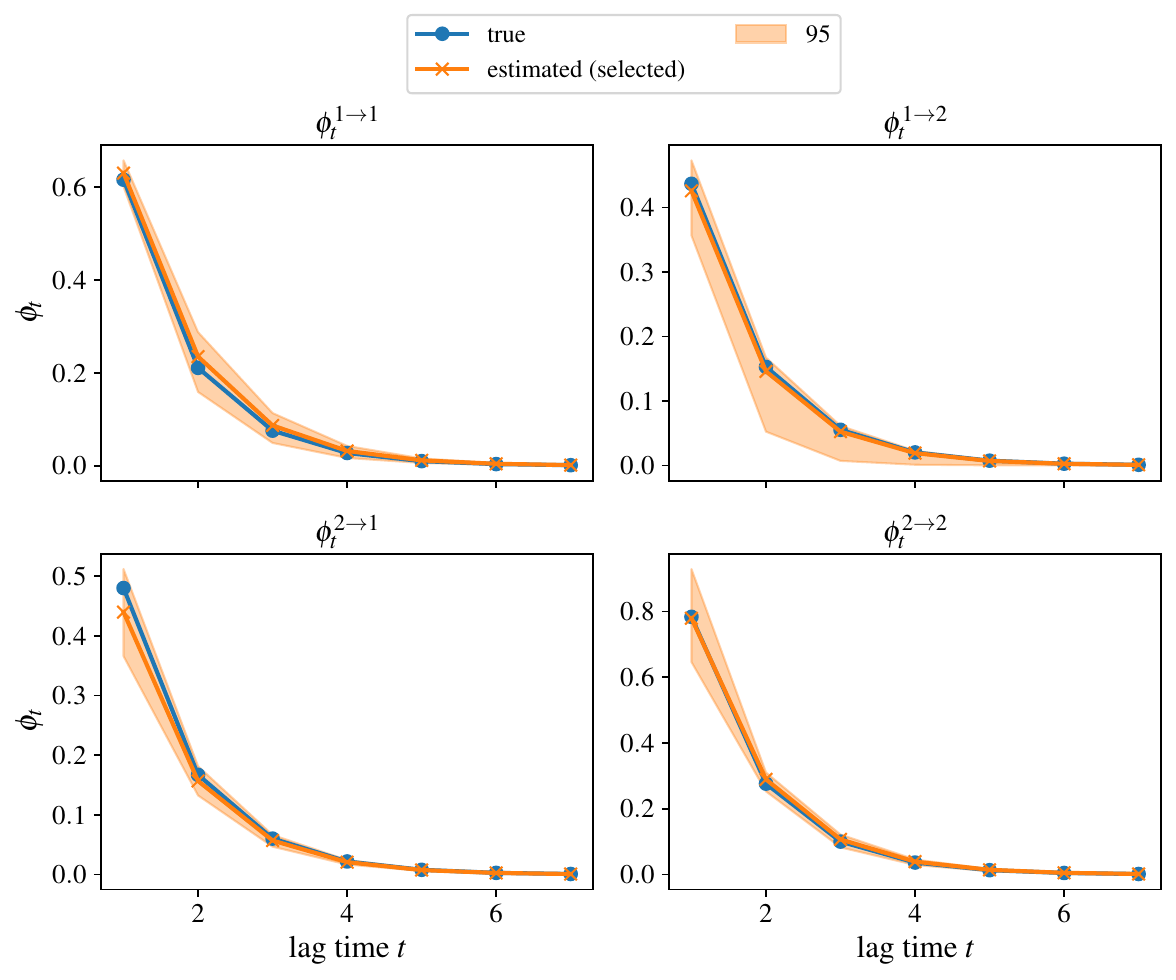}
    \caption{Reconstructed kernels $\{\phi_t^{\ell \to h}\}$ for each community pair under the exponential-basis model. The estimated curve is obtained by reconstructing the kernel from the selected median-based coefficient estimate; the shaded region shows the empirical across-run envelope (5th–95th percentiles), and the solid reference curve shows the true kernel.}
    \label{fig:inferred_kernel_median}
\end{figure}

\subsection{Complexity analysis}
A single gradient-ascent step with respect to the kernel parameters $\{\gamma_n^{s\to r}\}$ is computationally much less expensive under the grouped formulation induced by the mean-field approximation than under the full individual-level model. In the individual-level case, one must evaluate all intensities and their derivatives across $N$ units and $T$ time points, leading to complexity of order $\mathcal{O}(T^2N^2)$ under a naive implementation, or $\mathcal{O}(TN^2)$ when the exponential structure is exploited recursively. By contrast, the grouped formulation operates over only $q$ classes, where $q\ll N$ is fixed. Therefore the corresponding evaluations of the likelihood and its gradients are much cheaper; after an initial aggregation cost of order $\mathcal{O}(TN)$, the remaining updates depend only on class-level quantities and scale polynomially in $q$ and $p$ rather than $N$. This yields a substantial computational advantage, making the grouped mean-field approximation at most $\mathcal{O}(T^2N)$ for the naive implementation, and hence far more scalable than the full model.

\begin{Remark}
An alternative to the inference procedure described above would be to follow an approach similar to \cite{bacry16}, whereby the stochastic intensity is retained and the likelihood is approximated by an exactly tractable form using a Taylor expansion around the mean-field intensity. However, constructing a workable method along these lines is non-trivial and, from our investigations, unlikely to offer improved performance. For this reason, we do not consider it further here.
\end{Remark}

\section{Discussion} Section \ref{sec:application} acts as a proof of concept, demonstrating how the mean-field results can enable scalable inference in large-network settings. Extending this approach to more complex tasks -- such as clustering nodes when community labels are unavailable -- could make previously impractical approaches feasible by overcoming their scalability limitations.

Identifiability is a key concern in large-network models of this type and may be further exacerbated by the mean-field approximation. Replacing individual counts with their community-wise sums entails a loss of information, which may manifest itself as a degradation in model identifiability. A detailed theoretical investigation of this trade-off would offer a valuable avenue for future research.

While the results of this paper are formulated for the discrete time Poisson autoregression model, they naturally carry over to its continuous time analog, the Hawkes process. This is discussed in depth in Appendix \ref{sec:analogy}. 

\vspace{0.5cm}

\textbf{Acknowledgement:} MK wishes to thank Alexander Modell and Zoé
Agathe-Nerine for insightful discussions on the Stochastic Block Model and Hawkes processes on large graphs.\\ 
\textbf{Funding:} MK and EAKC acknowledge support from EPSRC NeST Programme grant EP/X002195/1. JM and JC acknowledge support from EPSRC StatML CDT grants EP/Y034813/1 and EP/S023151/1, respectively. \\

\newpage 

{\centering \Large{Supplementary Material}}

\appendix

\section{Analogy with Hawkes processes}
\label{sec:analogy}
As pointed out in the Introduction, Poisson autoregressions can be seen as discrete-time approximation of the continuous-time Hawkes process \cite{mahmoud2023, coutin2024functionalapproximationmarkedhawkes}. The convergence results thus carry over naturally to the Hawkes setting. The main difference in the continuous-time setting is that our driving randomness is no longer an \textit{iid} family of unit rate uni-dimensional unit-rate Poisson processes $\Pi= \left(\Pi^i_t(\cdot)\right)_{(i,t)\in \N ^*\times \N }$, but an \textit{iid} family of bi-dimensional Poisson measures $\Gamma = \left( \Gamma^i(\cdot,\cdot)_{i \in \N^*}\right)$ over $\R_+^2$ with a compensator $\mathrm d t \times \mathrm d \theta$. The Hawkes process $(H_t)_{t \in \R_+}$, as well as its stochastic intensity $(\lambda_t)_{t \in \R_+}$ are constructed on the SBM graph by "thinning" from $\Gamma$ according to the stochastic differential equation 
\begin{equation}
    \label{eq:thinning}
    \begin{cases}
        H_t^{i,N}&=\int_0^t\int_{\R_+}\mathds 1 _{\theta \leq \lambda ^{i,N}_s} \Gamma^{i}(\mathrm d s,\mathrm d \theta)\\
        \lambda^{i,N}_t &=\psi \left (\mu ^{C(i)}+\frac{1}{N}\sum_{j=1}^N \xi^{j \to i}\int_0^{t-}\int_{\R_+}\phi^{C(j)\to C(i)}(t-s) \mathds 1_{\theta \le \lambda^{j,N}_s} \Gamma ^{j}(\mathrm d s , \mathrm d \theta)\right).
    \end{cases}
\end{equation}
We refer the reader to \cite{bm, coutin2024functionalapproximationmarkedhawkes} for the construction of Hawkes processes via thinning. Equation \eqref{eq:thinning} is nothing but the continuous-time analog of Equation \eqref{eq: def_pois_auto}. 
The count series $Y^N$ can be seen as $\mathrm d H^N$, and the Hawkes process can simply be considered as a point process with stochastic intensity $\lambda_t ^{i,N} \mathrm d t = \mathbb P \left[\mathrm d H^{i,N}_t=1 \big | (H_s)_{s <t}\right],$ with $\lambda^N$ following the dynamics 
$$\lambda_t^{i,N}=\psi \left(\mu^{C(i)}+\frac{1}{N}\sum_{j=1}^N \xi ^{j \to i }\int_{0}^{t-}\phi^{C(j)\to C(i)}(t-s)\mathrm d H^{j,N}_s\right).$$
Just as for the discrete-time setting, we introduce the intermediary intensity $\hat \lambda^{N}=\left(\hat \lambda^{i,N}_t\right)_{(i,t)\in \{0,\cdots,N\}\times \R_+}$ constructed on the quenched graph as the solution of the equation 

\begin{align}
    \hat \lambda^{i,N}_{t}&:=\psi \left(\mu_{C(i)}+\frac{1}{N} \sum_{j=1}^N \xi_{j\to i} \int_{0}^t \phi ^{C(j)\to C(i)}(t-s)\hat{\lambda}^{j,N}_s \mathrm d s\right)\label{def:lambda_hat_ct} \\
    &=\psi \left(\mu_{C(i)}+\sum_{\ell=1}^q \alpha_\ell \frac{1}{N_\ell}\sum_{j: C(j)=\ell}  \xi_{j\to i} \int_0^t \phi ^{\ell\to C(i)}(t-s)\hat{\lambda}^{j,N}_s \mathrm d s\right) \nonumber.
\end{align}
The existence and uniqueness of $\hat \lambda ^N$ as the solution of Equation \eqref{def:lambda_hat_ct} (which is the continuous-time analog of Equation \eqref{def:lambda_hat}) can be proven using contraction arguments as in \cite{coutin2024functionalapproximationmarkedhawkes} or following the theorems of Chapter 6 in \cite{volterra}. The same can be applied to the fully deterministic intensity $\bar \lambda =\left(\bar \lambda ^\ell _t\right)_{(\ell,t)\in \{1,\cdots,q\}\times \R_+}$ defined as the unique solution of the equation
\begin{equation}
    \label{def: lambda_bar_ct}
    \bar \lambda ^\ell _t = \psi \left(\mu ^\ell+ \sum_{h=1}^q \alpha _h p^{h \to \ell } \int_0^t \phi^{h \to \ell  }(t-s) \bar \lambda ^h _s \mathrm ds\right),
\end{equation}
which is the continuous-time analog of Equation \eqref{def: lambda_bar}. Using the same broadcasting formula in \eqref{eq:bar_broadcast}, we define $\left(\bar \lambda ^{i,N}_t\right)_{(i,t)\in \{1,\cdots,N\}\times \R_+}$ as well as the (a priori inhomogeneous) Poisson process 
$$\bar H^{i,N}_t=\int_0^t \int_{\R_+} \mathds 1 _{\theta \le \bar \lambda ^{i,N}_s}\Gamma ^i(\mathrm d s , \mathrm d \theta ).$$
The mean-field approximation in continuous time is roughly  $H^{i,N}_t\simeq\bar H^{i,N}_t$ for all $t \in [0,T]$. Before we state the exact convergence result, we point out that the Euclidean stability condition in continuous time is written as
\begin{equation}
    \label{ass: ct}
    L\max_{h\in \{1,\cdots,q\}} \alpha_h \left\| \left(p_{h \to \ell}\int_0^{+\infty} \left|\phi^{h \to \ell}(t)\right |\mathrm d t\right)_{(\ell,h)\in \{1,\cdots,q\}^2}\right\|_2<1,
\end{equation}
which is clearly the continuous-time version of Assumption \ref{ass: crucial}. 
\begin{Theorem}
    Let $N \in \N^*$ be a fixed number of nodes and $T\in \R_+^*$ be the time horizon. If Assumption \ref{ass: ct} holds, then there exists two positive constants $C$ and $c$ that do not depend on $N$ nor $T$ such that, on the event $E^N$ (with $\Pb (E^N)\ge 1-2\mathrm e ^{-N}$) we have 
    $$\frac{1}{N} \sum_{i=1}^N\frac{1}{T}\E \left [\int _0^T \left | \mathrm d H^{i,N}_t-\mathrm d \bar H^{i,N}_t  \right |\right] \le \frac{C}{\sqrt N}\exp \left(c \frac{T}{\sqrt N}\right).$$
\end{Theorem}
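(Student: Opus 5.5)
The plan is to mimic the discrete-time argument, in three steps through the intermediary $\hat\lambda^N$ of \eqref{def:lambda_hat_ct}. Since both $H^{i,N}$ and $\bar H^{i,N}$ are thinned from the same Poisson measure $\Gamma^i$, we have
$$\E\left[\int_0^T|\mathrm dH^{i,N}_t-\mathrm d\bar H^{i,N}_t|\right]=\E\left[\int_0^T\int_{\R_+}\left|\mathds 1_{\theta\le\lambda^{i,N}_t}-\mathds 1_{\theta\le\bar\lambda^{i,N}_t}\right|\Gamma^i(\mathrm dt,\mathrm d\theta)\right]=\int_0^T\E\left[|\lambda^{i,N}_t-\bar\lambda^{i,N}_t|\right]\mathrm dt.$$
This holds by predictability and the compensator $\mathrm dt\times\mathrm d\theta$. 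It therefore suffices to bound $\frac1{NT}\int_0^T\sum_i\E|\lambda^{i,N}_t-\bar\lambda^{i,N}_t|\,\mathrm dt$. By Cauchy--Schwarz, $\sum_i a_i\le\sqrt N\|a\|_2$, so it is enough to show that
$$\frac1T\int_0^T\left\|\E|\lambda^N_t-\hat\lambda^N_t|\right\|_2\mathrm dt\quad\text{and}\quad\frac1T\int_0^T\left\|\hat\lambda^N_t-\bar\lambda^N_t\right\|_2\mathrm dt$$
are both $\le C\exp(cT/\sqrt N)$ on $E^N$. These are the continuous analogues of Propositions \ref{prop:lambda_hat} and \ref{prop:hat_bar}.

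\textbf{A priori bound.} First I would prove the analogue of Lemma \ref{lemma:hat_lambda}: $\|\hat\lambda^N_t\|_2\le C\sqrt N e^{ct/\sqrt N}$ on $E^N$. Write $\Phi^N=\Eb[\Phi^N]+(\Phi^N-\Eb[\Phi^N])$. On the block-constant structure, the mean part acts through the $q\times q$ matrix of Assumption \eqref{ass: ct}, contributing a Volterra operator of integrated norm $\nu<1$. Lemma \ref{lmm:vershynin} gives, pointwise in $s$, $\|\Phi^N(s)-\Eb\Phi^N(s)\|_2\le C\max|\phi(s)|/\sqrt N$. Lipschitzness of $\psi$ then gives
$$f(t)\le C\sqrt N+\int_0^t\left(k(t-s)+\tfrac{C}{\sqrt N}g(t-s)\right)f(s)\,\mathrm ds,\qquad \mathcal S(k)=\nu<1.$$
Solving via the resolvent of $k$ (summable since $\nu<1$), followed by a Grönwall step only for the $O(N^{-1/2})$ perturbation, yields the factor $e^{cT/\sqrt N}$. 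I expect this to need a bound on $\sup|\phi|$, or an $L^2$ version in time, just as the discrete proof implicitly uses the sequence $\phi_t$.

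\textbf{Step $\hat\lambda$ to $\bar\lambda$.} The difference satisfies a linear Volterra inequality driven by $(\Phi^N-\Eb\Phi^N)*\hat\lambda^N$, whose $\|\cdot\|_2$ norm is $\le \frac{C}{\sqrt N}\int_0^t|\phi(t-s)|\,\|\hat\lambda_s\|_2\,\mathrm ds\le C e^{ct/\sqrt N}$ by the a priori bound. Inverting with the resolvent of $k$ gives a pointwise bound $C\,t\,e^{cT/\sqrt N}$ in the worst case. Averaging over $t\in[0,T]$ and exchanging integrals (Fubini, using summability of the resolvent) removes the factor $T$, as in the time-averaged part of Proposition \ref{prop:hat_bar}.

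\textbf{Step $\lambda$ to $\hat\lambda$.} Write
$$\lambda_t-\hat\lambda_t\;\lesssim\;L\,\Phi^N*\left(\mathrm dH-\lambda\,\mathrm ds\right)+L\,\Phi^N*(\lambda-\hat\lambda)\,\mathrm ds.$$
The martingale term has second moment $\sum_j (\Phi^{ij})^2*\E\lambda^j$, which is $O(1/N)\cdot\|\E\lambda\|_1/N$ per coordinate. Its $\ell^2$ norm over $i$ is therefore $O(1)$ once $\E\lambda\preceq|\lambda-\hat\lambda|+\hat\lambda$ is controlled through the a priori bound. The remaining term is handled by the same Volterra resolvent argument, splitting $\Phi^N$ into its mean and fluctuation as above. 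The main obstacle is this step: closing the loop requires a bound on $\E\lambda$ that itself depends on $\E|\lambda-\hat\lambda|$. I would first treat it in $\ell^2$ with a square-root inequality $\sqrt{a+b}\le\sqrt a+\sqrt b$, so the feedback enters linearly with coefficient $O(N^{-1/2})$ and is absorbed into the $e^{cT/\sqrt N}$ factor.

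Combining the two steps and dividing by $\sqrt N\cdot T$ (from Cauchy--Schwarz and the time average) yields the claimed $\frac{C}{\sqrt N}\exp(cT/\sqrt N)$.
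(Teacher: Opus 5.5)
Your skeleton matches the paper's. Its proof only carries out the thinning identity and the Cauchy--Schwarz reduction, then appeals to continuous-time versions of Propositions \ref{prop:lambda_hat} and \ref{prop:hat_bar}, so your sketch goes further. Your remark that the martingale variance needs $\phi\in L^2$, which \eqref{ass: ct} does not provide, is a fair point that the paper glosses over.

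\textbf{The gap.} The step that fails is the scalar reduction in your a priori bound. Writing $f(t)\le C\sqrt N+\int_0^t k(t-s)f(s)\,\mathrm ds$ for $f(t)=\|\hat\lambda^N_t\|_2$ requires $k(u)\ge\|\Eb[\Phi^N(u)]\|_2$. Hence $\mathcal S(k)\ge\int_0^\infty\|\Eb[\Phi^N(u)]\|_2\,\mathrm du$, whereas \eqref{ass: ct} only controls the norm of the integral, $\|\int_0^\infty\Eb[\Phi^N(u)]\,\mathrm du\|_2\le\nu$. This is exactly the distinction between Assumptions \ref{ass: crucial} and \ref{ass:crucial_strong}, and the inequality between the two quantities goes the wrong way for you.

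For instance, take $q=2$, $\alpha=(1/2,1/2)$, $p\equiv 1$, $L=1$, $\phi^{1\to1}=\tfrac32\mathds 1_{[0,1)}$, $\phi^{2\to2}=\tfrac32\mathds 1_{[1,2)}$ and zero cross kernels. Then $\nu=3/4$ but $\int_0^\infty\|\Eb[\Phi^N(u)]\|_2\,\mathrm du=3/2$. The resolvent of your $k$ is not integrable, and your bound grows like $\mathrm e^{\gamma T}$ with $\gamma>0$ independent of $N$, although the system itself is stable.

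The repair is the paper's order of operations. First invert the vector inequality with the non-negative matrix resolvent $R=\sum_{m\ge1}\Eb[\Phi^N]^{*m}$ via Lemma \ref{lmm:convolution}. Only then take norms, using $\|I^N+\int_0^\infty R(u)\,\mathrm du\|_2=\|(I^N-\int_0^\infty\Eb[\Phi^N(u)]\,\mathrm du)^{-1}\|_2\le(1-\nu)^{-1}$. The $O(N^{-1/2})$ fluctuation then enters through the composite kernel $(\delta^N_0+R)*(\Phi^N-\Eb[\Phi^N])$, as in the proof of Lemma \ref{lemma:hat_lambda}.

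\textbf{Norm placement.} Your reduction puts the Euclidean norm inside the time integral. The $\hat\lambda\to\bar\lambda$ step therefore needs $\int_0^T\|\hat\lambda^N_t-\bar\lambda^N_t\|_2\,\mathrm dt\le CT\mathrm e^{cT/\sqrt N}$. The time-averaged part of Proposition \ref{prop:hat_bar} does not give this: it bounds $\|\sum_t|\hat\lambda^N_t-\bar\lambda^N_t|\|_2$, with the norm outside, using only the bound on $\|I^N+\mathcal S(R)\|_2$. You have two options:
\begin{itemize}
\item keep the norm outside the time integral, as the paper's Cauchy--Schwarz step does; or
\item prove $\int_0^\infty\|R(u)\|_2\,\mathrm du<\infty$.
\end{itemize}
The second does hold here. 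Write $\Eb[\Phi^N(u)]=\frac1N\Theta B(u)\Theta^\top$, with $\Theta$ the class-indicator matrix and $B(u)$ a $q\times q$ kernel. Then $R$ reduces to a non-negative $q\times q$ resolvent of total mass $\bar M(I^q-\bar M)^{-1}$, which is finite since $\rho(\bar M)\le\nu<1$. In fixed dimension $q$, the integral of the norm is at most a $q$-dependent multiple of the norm of the integral.

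\textbf{The feedback loop is avoidable.} The loop you flag as the main obstacle in the $\lambda\to\hat\lambda$ step comes from compensating with $\lambda$. The paper compensates with $\hat\lambda$ instead, inserting the process thinned from the same randomness at level $\hat\lambda^{j,N}$ (in continuous time, from $\Gamma^j$). The difference term is then bounded in expectation by $\sum_j\int_0^t|\Phi^{ij,N}(t-s)|\,\E|\lambda^{j,N}_s-\hat\lambda^{j,N}_s|\,\mathrm ds$. The martingale term has variance $\sum_j\int_0^t(\Phi^{ij,N}(t-s))^2\hat\lambda^{j,N}_s\,\mathrm ds$, which is deterministic given the graph and controlled by the a priori bound. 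No square-root bootstrapping is needed.
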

\begin{proof}
    To keep this work at a reasonable length, we only give a partial proof of this theorem. For $i \in \{1,\cdots,N\}$, we have that 
    \begin{align*}
        \E \left [\int _0^T \left | \mathrm d H^{i,N}_t-\mathrm d \bar H^{i,N}_t  \right |\right]=&\E \left [\int _0^T \left | \int_{\R_+}(\mathds 1_{\theta \le \lambda^{i,N}_t}-\mathds 1_{\theta \le \bar\lambda^{i,N}_t}) \Gamma^i(\mathrm d t,\mathrm d \theta )  \right |\right]\\
        \le&\E \left [\int _0^T  \int_{\R_+}\left |\mathds 1_{\theta \le \lambda^{i,N}_t}-\mathds 1_{\theta \le \bar\lambda^{i,N}_t}\right |\Gamma^i(\mathrm d t,\mathrm d \theta )  \right]\\
        =& \E \left[\int_0^T \left|\lambda ^{i,N}_t-\bar \lambda^{i,N}_t \right | \mathrm d t\right].
    \end{align*}
    Therefore, using the Cauchy-Schwarz inequality we have that 
    \begin{align*}
        \frac{1}{N} \sum_{i=1}^N\frac{1}{T}\E \left [\int _0^T \left | \mathrm d H^{i,N}_t-\mathrm d \bar H^{i,N}_t  \right |\right] \le &\frac{1}{N} \sum_{i=1}^N\frac{1}{T}\E \left[\int_0^T \left|\lambda ^{i,N}_t-\bar \lambda^{i,N}_t \right | \mathrm d t\right]\\
        \leq & \frac{1}{\sqrt N} \sqrt{\sum_{i=1}^N \left(\frac{1}{T}\E \left[\int_0^T \left| \lambda^{i,N}_t-\bar \lambda^{i,N}_t\right| \mathrm d t\right]\right)^2}\\
        =&\frac{1}{\sqrt N} \sqrt{\left \|\frac{1}{T}\E \left[\int_0^T \left| \lambda^{N}_t-\bar \lambda^{N}_t\right| \mathrm d t\right] \right \|_2}.
    \end{align*}
    We then proceed to bound the difference between the intensities from above alongside Proposition \ref{prop:lambda_hat} and Proposition \ref{prop:hat_bar}.
\end{proof}

\section{Technical lemmata}
\label{App:lemmas}
\begin{Lemma}
    \label{lmm:convolution}
    Let $(A_k)_{k\in \N}$ be a family of non-negative matrices satisfying $A_0=0$ and $\rho \left (\mathcal S(A)\right)<1$. If a vector sequence $(x_t)_{t\in \N}$ satisfies 
    \begin{equation}
    \label{ineq:conv}
        |x_{t}|\preceq \sum_{k=0}^{t}A_{t-k} |x_{k}| + W_t,
    \end{equation}  
    for any $t\in \N$ and for some vector $(W_t)_{t \in \N}$, the
    $$|x_t| \preceq \left (\left(\sum_{m=0}^{+\infty}  A ^{(*m)} \right)*W\right)_t.$$
    In particular, if there exists a nonnegative constant vector sequence $\bar{W}$ such as $W_t \preceq \bar W$ then
    $$|x_t|_{ }\preceq \left (I-\sum_{k\geq 1}A_k \right)^{-1}\bar W.$$
\end{Lemma}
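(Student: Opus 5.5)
Since $A_0=0$, the inequality \eqref{ineq:conv} reads $|x_t|\preceq \sum_{k=0}^{t-1}A_{t-k}|x_k|+W_t$, which is a causal (strictly lower-triangular in time) recursion. The plan is to iterate it and identify the resulting series with the resolvent of the convolution operator. We work with non-negative matrices, so $\preceq$ is preserved under multiplication by $A_k$ and under summation. We also assume, as the intended reading, that $W_t\succeq 0$; this is what makes the final bound meaningful.

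\textbf{Step 1: iteration.} Write the inequality as $|x|\preceq (A*|x|)+W$ coordinatewise in time. Substituting the inequality into itself $n$ times and using monotonicity gives
$$|x|\preceq \sum_{m=0}^{n-1}\big(A^{(*m)}*W\big)+A^{(*n)}*|x|.$$
This is an induction on $n$, using associativity of $*$.

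\textbf{Step 2: the remainder vanishes.} Because $A_0=0$, the sequence $A^{(*n)}$ vanishes at all times $t<n$. Hence for fixed $t$ the remainder $(A^{(*n)}*|x|)_t$ is exactly zero once $n>t$. So for each fixed $t$ we get
$$|x_t|\preceq \sum_{m=0}^{t}\big(A^{(*m)}*W\big)_t=\Big(\big(\textstyle\sum_{m\ge0}A^{(*m)}\big)*W\Big)_t,$$
where the infinite sum is a finite sum at each time. This proves the first claim without using the spectral condition at all.

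\textbf{Step 3: the constant bound.} Now suppose $W_t\preceq\bar W$. Then
$$\Big(\big(\textstyle\sum_m A^{(*m)}\big)*W\Big)_t\preceq \sum_m\sum_{s\le t}A^{(*m)}_s\,\bar W\preceq\sum_m \mathcal S\big(A^{(*m)}\big)\bar W .$$
Sums of non-negative convolutions factorize, so $\mathcal S(A^{(*m)})=\mathcal S(A)^m$; this follows from Fubini/Tonelli, since all terms are non-negative. The condition $\rho(\mathcal S(A))<1$ then makes the Neumann series converge to $(I-\mathcal S(A))^{-1}=(I-\sum_{k\ge1}A_k)^{-1}$.

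The main subtlety is not any single step but the bookkeeping of monotonicity. Every manipulation must keep non-negative matrices acting on inequalities, and the second bound must pass through the partial sums $\sum_{s\le t}$ before extending them to full sums. One also needs $\mathcal S(A)$ finite, which is implicit in the hypothesis that $\rho(\mathcal S(A))$ is defined. No other obstacle is expected.
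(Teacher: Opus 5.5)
Your proof is correct. Step 1 is the same telescoping the paper uses: convolve the inequality with $A^{(*m)}$ and sum over $m$.

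Where you diverge is the remainder $(A^{(*n)}*|x|)_t$. The paper bounds $\sum_{k\le t}(A^{(*n)}*|x|)_k \preceq \mathcal S(A)^{n}\sum_{k\le t}|x_k|$ and lets $n\to\infty$ using $\rho(\mathcal S(A))<1$. You instead note that $A_0=0$ gives $A^{(*n)}_s=0$ for $s<n$, so the remainder is exactly zero once $n>t$.

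Your route is more elementary. It shows the first conclusion holds with no spectral hypothesis at all, since each time slice of $\sum_m A^{(*m)}$ is a finite sum. The condition $\rho(\mathcal S(A))<1$ is then needed only for the time-uniform bound. You derive that bound explicitly via $\mathcal S(A^{(*m)})=\mathcal S(A)^m$ and the Neumann series, a step the paper leaves implicit. The paper's route, in turn, never uses $A_0=0$, so it also covers inequalities in which $|x_t|$ appears on both sides.

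One correction: drop the assumption $W_t\succeq 0$. None of your steps uses it; only $\bar W\succeq 0$ matters, when you extend $\sum_{s\le t}$ to the full sum in Step 3. It would also be too restrictive for the paper's purposes. In the proof of Lemma \ref{lemma:hat_lambda} the lemma is applied with $W=K$, which contains $\left(\Phi^N-\Eb\left[\Phi^N\right]\right)*\hat\lambda^N$ and can have negative entries.
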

\begin{proof}
    Under a compact form, Inequality \eqref{ineq:conv} writes 
    $$|x_t| \preceq \left(A*|x|\right)_t+ W_t.$$
    By taking the convolution on the right and the left with $A^{(*m)}$ for some $m \in \N$ we have that 
    $$\left(A^{(*m)}*|x|\right)_t \preceq \left(A^{(*(m+1))}*|x|\right)_t+ \left(A^{(*m)}*W\right)_t,$$
    where the order has been preserved because taking the convolution involves adding and multiplying by positive matrices. Taking the telescopic sum of the last inequality yields 
$$|x_t|-\left(A^{(*(r+1))}*|x|\right)_t \preceq \left(\sum_{m=0}^{r}A^{(*m)}*W\right)_t,$$
for any $r \in \N$. For any $t \in \N$, we have that 
\begin{align*}
    \sum_{k=0}^t \left(A^{(*(r+1))}*|x|\right)_k &=\sum_{k=0}^t \sum_{j=0}^kA^{(*(r+1))}_j|x|_{k-j}\\
    &\preceq \left(\sum_{j=0}^{+\infty}A^{(*(r+1))}_j \right) \left(\sum_{k=0}^t|x|_k\right)\\
    &\preceq \left(\sum_{j=0}^{+\infty}A_j\right)^{r+1}\left(\sum_{k=0}^t|x|_k\right),
\end{align*}
and since $\rho \left(\sum_{j=0}^{+\infty}A_j\right)<1$, the right hand side of the last inequality converges to zero as $r$ grows to infinity, hence the result. 
\end{proof}
\begin{Lemma}
    \label{lmm: euclidean_norm}
    Let $\Phi^N$ be the kernel random matrix defined in Equation \eqref{def: Phi}. If Assumption \ref{ass: crucial} is in force, then we have that 
    $$\left \|\mathcal S\left(\Eb \left[\Phi^N\right]\right) \right\|_2 <1.$$
    And if Assumption \ref{ass:crucial_strong} holds, then 
    $$\mathcal S\left(\left \| \Eb \left[\Phi^N\right] \right\|_2\right) <1.$$
    Similarly, if Assumption \ref{ass:uniform} holds, then
    $$\left \|\mathcal S\left(\Eb \left[\Phi^N\right]\right) \right\|_\infty <1,$$
    and if Assumption \ref{ass:uniform_strong} holds then,
    $$\mathcal S\left(\left \| \Eb \left[\Phi^N\right] \right\|_\infty\right) <1.$$
\end{Lemma}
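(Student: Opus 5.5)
The plan is to exploit the block structure of $\Eb[\Phi^N]$ and reduce every norm to a $q\times q$ computation. By \eqref{def: Phi}, $\Eb[\Phi^N_t]^{ij}=\frac1N p^{C(j)\to C(i)}\phi^{C(j)\to C(i)}_t$. Let $J\in\R^{N\times q}$ be the class indicator matrix, $J^{i\ell}=\mathds 1_{C(i)=\ell}$. For each $t$ we can then write
$$\Eb[\Phi^N_t]=\frac1N J P_t J^\top,\qquad (P_t)_{h\ell}=p^{\ell\to h}\phi^{\ell\to h}_t,$$
and summing over $t$ gives $\mathcal S(\Eb[\Phi^N])=\frac1N J\,\mathcal S(P)\,J^\top$. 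Since $|\mathcal S(\phi)|\preceq\mathcal S(|\phi|)$ entrywise, and both the spectral and the $\infty$-norm are monotone on non-negative matrices (for $\|\cdot\|_2$: $\|A\|_2\le\||A|\|_2\le\|B\|_2$ whenever $|A|\preceq B$), it suffices to treat the kernels $|\phi|$.

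\textbf{Euclidean norm.} Set $D=J^\top J=\mathrm{diag}(N_1,\dots,N_q)=N\,\mathrm{diag}(\alpha)$. The matrix $Q=JD^{-1/2}$ has orthonormal columns, so $\|QXQ^\top\|_2=\|X\|_2$ for every $q\times q$ matrix $X$. Writing $J=QD^{1/2}$, we get
$$\Big\|\tfrac1N J\,\mathcal S(|P|)\,J^\top\Big\|_2=\Big\|\mathrm{diag}(\alpha)^{1/2}\,\mathcal S(|P|)\,\mathrm{diag}(\alpha)^{1/2}\Big\|_2\le \max_h\alpha_h\,\|\mathcal S(|P|)\|_2 .$$
Up to the transposition convention $(\ell,h)$ used in Assumption \ref{ass: crucial} (transposition does not change the spectral norm), the right-hand side is exactly $\nu/L$. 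For the strong version we apply the same identity to each $\Eb[\Phi^N_t]$ separately, which gives $\|\Eb[\Phi^N_t]\|_2\le\max_h\alpha_h\|(p|\phi_t|)\|_2$; summing over $t$ gives $\nu_{\mathcal S}/L$.

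\textbf{Uniform norm.} Take a row $i$ with $C(i)=h$. Grouping the columns by class gives
$$\sum_j \tfrac1N p^{C(j)\to h}\,\mathcal S(|\phi^{C(j)\to h}|)=\sum_\ell \alpha_\ell\, p^{\ell\to h}\,\mathcal S(|\phi^{\ell\to h}|),$$
which is the $h$-th row sum of the $q\times q$ matrix appearing in Assumption \ref{ass:uniform}. Taking the maximum over $h$ yields $\nu'/L$. Doing the same computation at each fixed $t$ and then summing yields $\nu'_{\mathcal S}/L$.

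\textbf{Main obstacle.} The only genuinely non-routine step is the orthonormal factorisation $J=QD^{1/2}$, which transfers the $N\times N$ spectral norm to the $q\times q$ level without losing a dimension factor. Everything else is bookkeeping. There is one point I must flag: the argument naturally yields $L\|\cdot\|\le\nu$ (respectively $\nu_{\mathcal S},\nu',\nu'_{\mathcal S}$), i.e.\ $\|\cdot\|<1/L$. This matches the literal statement when $L\le1$, and otherwise it is the form actually used downstream, where the kernel is always multiplied by $L$. I would state the conclusion as $L\|\mathcal S(\Eb[\Phi^N])\|_2\le\nu<1$ and so on.
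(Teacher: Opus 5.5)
Your proposal is correct and follows essentially the paper's argument: the paper writes $\mathcal S(\Eb[\Phi^N])=\frac1N L\,\Theta M'\Theta^\top$ with the same class-indicator matrix $\Theta$, bounds the spectral norm by submultiplicativity using $\|\Theta\|_2=\sqrt{N\max_h\alpha_h}$ (which already gives your $\max_h\alpha_h\|M'\|_2$ bound without a dimension loss, so your orthonormal factorisation is an equivalent, slightly sharper, route rather than a needed extra step), and handles the $\infty$-norm by the same row-sum computation. Your caveat about $L$ is well taken: the paper silently absorbs $L$ and the absolute values into $\Phi^N$ in the appendix (as in Lemma \ref{lmm:average_adjacency}), which is exactly the form $L\|\cdot\|\le\nu<1$ you propose to state.
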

\begin{proof}
    First, we have that 
    $$\mathcal S\left(\Eb \left[\Phi^N\right]\right)^{ij}=\frac 1 N Lp^{C(j)\to C(i)}\mathcal S \left(\left|\phi^{C(j) \to C(i)} \right |\right) .$$
    By introducing the rectangular matrix $\Theta \in \mathcal M_{N\times q} (\R)$ defined by $\Theta_{ih}=\mathds 1 _{C(i)=h}$ and the square matrix $M' \in \mathcal M _{q \times q} (\R)$ given by $M'_{h\ell}=p^{\ell \to h} \mathcal S \left(\left |\phi^{\ell \to h}\right |\right)$, we clearly have that 
    $$\mathcal S\left(\Eb \left[\Phi^N\right]\right) = \frac 1 N L \Theta M' \Theta ^\top .$$
    Using the sub-multiplicativity of the Euclidean norm and the fact that $\|\Theta \|_2=\sqrt {\rho \left(\Theta^\top \Theta\right)}=\sqrt {N \max_{h \in \{1,\cdots,q\}} \alpha_h}$ we have that  
    $$\left \| \mathcal S\left(\Eb \left[\Phi^N\right]\right)\right \|_2 \leq L \max_{h \in \{1,\cdots,q\}} \alpha_h \|M'\|_2$$
    and we conclude regarding the first point using Assumption \ref{ass: crucial}.\\
    For the second point we simply have that 
    \begin{align*}
        \left \| \mathcal S\left(\Eb \left[\Phi^N\right]\right)\right \|_\infty =& L \max_{1 \leq i \le N}\sum_{j=1}^N\frac 1 N p^{C(j)\to C(i)} \mathcal S \left( \left| \phi^{C(j)\to C(i)}\right |\right)\\
        =& L \max_{1 \leq h \le q}\sum_{\ell=1}^q \frac 1 N\sum_{j:C(j)=\ell} p^{\ell\to h} \mathcal S \left( \left| \phi^{\ell \to h}\right |\right)\\
         =& L \max_{1 \leq h \le q}\sum_{\ell=1}^q \alpha^\ell  p^{\ell\to h} \mathcal S \left( \left| \phi^{\ell \to h}\right |\right),
    \end{align*}
    which yields the result.
\end{proof}

\begin{Lemma}
\label{lmm:average_adjacency}
    Let $N, T \in \N^*$ be two fixed integers. We recall that for $t=1,\cdots,T$ the random matrix $\Phi^N_t$ is given in Equation \eqref{def: Phi} by
    $$\left(\Phi^{ij,N}_{t}\right)=\frac{1}{N}L \xi_{j \to i} \phi ^{C(j)\to C(i)}_t.$$
    There exist then a positive constant $C$ that does not depend on $T$ nor on $N$ such that on the event $E^N$ ($\Pb(E^N) \ge 1-2\mathrm e^{-N}$) we have 
    $$\left \|\Phi^N_t-\Eb \left[\Phi^N_t\right]\right \|_2\leq C \sum_{h,\ell=1}^q \left |\phi^{h \to \ell}_t \right |\frac{1}{\sqrt N}$$
    for all $t \in \{1,\cdots,T\}$.
\end{Lemma}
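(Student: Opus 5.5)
We reduce to Lemma \ref{lmm:vershynin} by writing the centred kernel matrix as a sum of block-masked copies of the centred adjacency matrix. For a fixed $t$, the $(i,j)$ entry of $\Phi^N_t-\Eb[\Phi^N_t]$ is $\frac{L}{N}\phi^{C(j)\to C(i)}_t\,(\xi_{j\to i}-p^{C(j)\to C(i)})$. The kernel value depends only on the pair of classes $(C(j),C(i))$.

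For each pair $(\ell,h)\in\{1,\cdots,q\}^2$, let $D_h\in\mathcal M_N(\R)$ be the diagonal projector with entries $\mathds 1_{C(i)=h}$. Then
$$\Phi^N_t-\Eb\left[\Phi^N_t\right]=\frac{L}{N}\sum_{h,\ell=1}^q \phi^{\ell\to h}_t\, D_h\left(\Xi-\Eb[\Xi]\right)D_\ell .$$
This identity holds because $D_h(\Xi-\Eb[\Xi])D_\ell$ keeps exactly the entries with $C(i)=h$ and $C(j)=\ell$, and on those entries $\Phi^N_t-\Eb[\Phi^N_t]$ equals $\frac{L}{N}\phi^{\ell\to h}_t$ times the corresponding entry of $\Xi-\Eb[\Xi]$. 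The decomposition is the one step that needs care: multiplying entrywise by a non-constant pattern can in general inflate the spectral norm, but here every pattern is a product of diagonal projectors, so no Schur-multiplier estimate is needed.

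Next we apply the triangle inequality and the sub-multiplicativity of $\|\cdot\|_2$, together with $\|D_h\|_2\le1$, to get
$$\left\|\Phi^N_t-\Eb\left[\Phi^N_t\right]\right\|_2\le\frac{L}{N}\sum_{h,\ell=1}^q\left|\phi^{\ell\to h}_t\right|\,\left\|\Xi-\Eb[\Xi]\right\|_2 .$$
On the event $E^N$ of Lemma \ref{lmm:vershynin}, $\|\Xi-\Eb[\Xi]\|_2\le C\sqrt N$. Substituting gives the claimed bound with constant $LC$ and the factor $1/\sqrt N$; the double sum over $(h,\ell)$ is the same whether indexed as $\phi^{h\to\ell}$ or $\phi^{\ell\to h}$.

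Because $E^N$ depends only on the graph and not on $t$, the bound holds simultaneously for all $t\in\{1,\cdots,T\}$ without a union bound over time. The constant $LC$ depends on neither $N$ nor $T$.
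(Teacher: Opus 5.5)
Your proof is correct. It takes a more elementary route than the paper.

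\textbf{The paper's route.} The paper writes $\Phi^N_t-\Eb[\Phi^N_t]$ as a Hadamard product $\bigl(\frac1N\Theta A_t\Theta^\top\bigr)\circ(\Xi-\Eb[\Xi])$. It then applies the Horn--Johnson Schur-multiplier inequality $\|(X^\top Y)\circ B\|_2\le c(X)\,c(Y)\,\|B\|_2$, where $c(\cdot)$ is the largest Euclidean column norm, with the factorisation $X^\top=\Theta A_t$, $Y=\Theta^\top$. This gives the intermediate estimate $\frac{L}{N}\max_h\bigl(\sum_\ell|\phi^{\ell\to h}_t|^2\bigr)^{1/2}\|\Xi-\Eb[\Xi]\|_2$. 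That estimate is then loosened to the stated double sum by norm equivalence on $\R^{q\times q}$.

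\textbf{Your route.} You observe that each block pattern is a rank-one $0/1$ matrix, so taking the Hadamard product with it is the same as $D_h(\cdot)D_\ell$. The triangle inequality and submultiplicativity then suffice.

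\textbf{What each approach buys.} Your estimate has a cruder dependence on $q$: for equal kernels it loses a factor of order $q^{3/2}$ against the paper's intermediate bound. This does not matter here, because $q$ is fixed and the lemma's right-hand side is already the full sum $\sum_{h,\ell}|\phi_t^{h\to\ell}|$. In exchange, you need no external matrix inequality and no norm-equivalence step. Your constant is explicitly $L$ times the constant of Lemma~\ref{lmm:vershynin}, and your identity holds verbatim for signed kernels. The paper's Schur-multiplier route would be the better choice if $q$ were allowed to grow with $N$. It would also be preferable for kernel patterns that are not finitely block-constant but still admit a factorisation with bounded row norms.
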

\begin{proof}
    We recall that $\Xi=\left(\xi_{j \to i}\right)_{i,j=1,\cdots,N}$ is the adjacency matrix and we let $A_t$ be the $q \times q$ matrix defined by
    $$\left(A_t\right)^{h \ell}:=L |\phi^{\ell \to h}_t| .$$
    We have that  
    $$\Phi^N_t-\Eb \left[\Phi^N_t\right]=\left(\frac{1}{N}\Theta A_t \Theta ^ \top \right) \circ \left(\Xi-\Eb \left[ \Xi\right]\right),$$
    where $\Theta_{ih}=\mathds 1 _{C(i)=h}$ for $i=1,\cdots,N$ and $h=1,\cdots,q$ and $\circ$ denotes the Hadamard (or component-wise) product.\\ Using Lemma 5.6.7 in \cite{HJ} we have that 
    $$\left \| \Phi^N_t-\Eb \left[\Phi^N_t\right] \right \|_2 \leq \frac{1}{N}\sqrt{\| I \circ \Theta A_tA_t^\top \Theta ^\top \|_2 \left \| I \circ \Theta \Theta ^\top \right \|_2} \left \| \Xi -\Eb \left[ \Xi\right]\right \|_2.$$
   A straightforward computation yields that $\left \| I \circ \Theta \Theta ^\top \right \|_2=1$ and $$\| I \circ \Theta A_tA_t^\top \Theta ^\top \|_2 \leq L^2\sum_{h,\ell=1}^q\left | \phi^{\ell \to h}_t\right |^2.$$ Since all norms are equivalent in $\R^{q \times q}$, we have that 
   $$\sqrt{\| I \circ \Theta A_tA_t^\top \Theta ^\top \|_2} \leq C \sum_{h,\ell =1}^q \left | \phi^{h \to \ell}_t\right |,$$
   for some constant $C$ that only depends on $q$. 
   We then conclude using Lemma \ref{lmm:vershynin}.
\end{proof}

\section{Proofs}
\label{App:proofs}
\subsection{Proof of Proposition \ref{prop:cvg_bar_k}}
\label{proof: cvg_bar_k}
 Let $f$ be the function from $\R_+^q$ to itself defined as $$f^h (X)=\psi \left(\mu_h + \sum_{\ell=1}^q \alpha_\ell p_{\ell \to h}\left(\sum_{t=1}^{+\infty}\phi_{t}^{\ell \to h}\right) X^\ell \right ).$$ Using the fact that $\psi$ is $L-$Lipschitz, we have or any $X$ and $Z$ in $\R_+^q$
    \begin{align*}
        |f(X)-f(Z)| \preceq \bar M |X-Z|,
    \end{align*}
    where $\bar M$ is given in Assumption \ref{ass:stability_MF}. Given $\varepsilon >0$ such that $\rho(\bar M)+\varepsilon <1$, there exists a matrix norm $\|\cdot\|$ such that 
    \begin{align*}
        \|f(X)-f(Z)\| \leq (\rho(\bar M)+\varepsilon)\|X-Z\|,
    \end{align*}
    which means that $f$ is a contraction on a Banach space, hence the existence and uniqueness. Regarding the convergence, we clearly have for any $h \in \{1,\cdots,q\}$
    $$ \lambda^h_{\infty}=\psi \left( \mu_h +\sum_{\ell =1}^q \alpha_\ell p_{\ell \to h} \left(\sum_{k=1}^t \phi^{\ell \to h}_{t+1-k}  \lambda ^\ell _\infty +\sum_{k=t+1}^{+\infty} \phi^{\ell \to h}_{k}  \lambda ^\ell _\infty\right) \right)$$
    which combined with \eqref{def: lambda_bar} and the fact that $\psi$ is $L$-Lipschitz yields
    \begin{align*}
        |\bar \lambda_{t+1}^h-\lambda^h_\infty| &\leq L \left|\sum_{\ell =1}^q \alpha_\ell p_{\ell \to h}\left(\sum_{k=1}^{t} \phi^{\ell \to h}_{t+1-k} (\bar \lambda^\ell _k -\lambda _\infty ^\ell )-\sum_{k=t+1}^{+\infty} \phi^{\ell \to h}_{k}  \lambda ^\ell _\infty\right)\right| \\
        &\leq \sum_{k=1}^t\sum_{\ell=1}^q L \alpha_\ell p_{\ell \to h} |\phi^{\ell \to h}_{t+1-k}||\bar \lambda^\ell _k -\lambda _\infty ^\ell|+Lq \sum_{k=t+1}^{+\infty} \tilde \phi_{k} \lambda _\infty
    \end{align*}

    where we used $\max(\alpha _{\ell}p_{\ell \to h})\leq 1$, $\lambda_\infty =\max_{\ell =1,\cdots,q} \lambda_\infty^\ell$ and $\tilde \phi_k =\max_{\ell,h=1,\cdots,q}|\phi_k^{\ell \to h}|.$
    According to Lemma \ref{lmm:convolution} there exists a matrix sequence $B \in \ell_1(\N)$ such that 
    $$\left |\bar \lambda_{t+1}^h-\lambda^h_\infty \right |\preceq \left (B*\sum_{k=\cdot}^{+\infty}\tilde \phi_k\right)_{t+1}.$$
    Using the fact that $\lim_{t\to +\infty}\sum_{k=t+1}^{+\infty}\tilde \phi_k=0$ we conclude. 

    \subsection{Proof of Lemma \ref{lemma:hat_lambda}}
    \label{proof: hat_lambda}
        Using the Lipschitz property of $\psi$, we have that 
    \begin{align*}
      \hat\lambda^{i,N}_{t+1}&\leq \psi \left(\mu_{C(i)}\right) + L \sum_{\ell=1}^q \alpha_\ell \frac{1}{N_\ell}\sum_{j: C(j)=\ell}  \xi_{j\to i} \sum_{k=1}^t \left| \phi ^{\ell\to C(i)}_{t+1-k}\right |\hat{\lambda}^{j,N}_k\\
      &=\psi \left(\mu_{C(i)}\right) + \sum_{k=1}^t\left(\Phi^N_{t+1-k}\hat \lambda ^{N}_k\right)^i,
    \end{align*}
    which under matrix form writes
    \begin{align*}
\hat\lambda^{N}_{t+1}\preceq &\psi\left(\mu_{C(\cdot)}\right) + \sum_{k=1}^t\Phi^N_{t+1-k}\hat \lambda ^{N}_k\\
        =&\psi\left(\mu_{C(\cdot)}\right) + \sum_{k=1}^t\Eb\left[\Phi^N_{t+1-k}\right]\hat \lambda ^{N}_k+ \sum_{k=1}^t\left(\Phi^N_{t+1-k}-\Eb\left[\Phi^N_{t+1-k}\right]\right)\hat \lambda ^{N}_k.\\
    \end{align*}
    Since Lemma \ref{lmm: euclidean_norm} yields $\rho\left(\mathcal S \left(\Eb \left[\Phi^N \right]\right)\right) \leq  \left \| \mathcal S \left(\Eb \left[\Phi^N\right]\right)\right \|_2<1$  we can define the matrix sequence 
    $$R_n=\sum_{m=1}^{+\infty} \left(\Eb\left[\Phi^N\right]\right)^{(*m)}_n$$
    which satisfies 
    $$\mathcal S (R) = \sum_{m=1}^{+\infty}\sum_{n=0}^{+\infty} (\Eb\left[\Phi^N\right]^{*m})_n=\mathcal S \left(\Eb \left[\Phi^N\right]\right)\left(I_N-\mathcal S \left(\Eb \left[\Phi^N\right]\right)\right)^{-1}$$
    and using Lemma \ref{lmm:convolution} we have that
    $$\hat \lambda^{N}_{t+1}\preceq K_{t+1}+\left(R*K\right)_{t+1}$$
    where 
    \begin{align*}K_{t+1}&= \psi\left(\mu_{C(\cdot)}\right)+\sum_{k=1}^t\left(\Phi^N_{t+1-k}-\Eb\left[\Phi^N_{t+1-k}\right]\right)\hat \lambda ^{N}_k\\
    &=\psi\left(\mu_{C(\cdot)}\right)+\left(\left(\Phi^N-\Eb\left[\Phi^N\right]\right)*\hat \lambda^{N}\right)_{t+1}
    \end{align*}
    Hence, the auxiliary process satisfies 
    \begin{align*}
        \hat \lambda^{N}_{t+1} &\preceq \left((\delta_0^N+R)*\psi\left(\mu_{C(\cdot)}\right)\right)_{t+1}+\left(\left(\Phi^N-\Eb\left[\Phi^N\right]\right)*\hat \lambda^{N}\right)_{t+1} +\left(R*\left(\Phi^N-\Eb\left[\Phi^N\right]\right)*\hat \lambda^{N}\right)_{t+1}\\
        &=\left(I_N+\sum_{k=1}^{t+1}R_k\right)\cdot\psi\left(\mu_{C(\cdot)}\right)+\left(\left(\delta^N_0+R\right)*\left(\Phi^N-\Eb\left[\Phi^N\right]\right)*\hat \lambda^{N}\right)_{t+1}\\
        &\preceq \left(\sum_{m=0}^{+\infty}\left(\mathcal S \left(\Eb \left[\Phi^N \right]\right)\right)^m\right)\cdot\psi\left(\mu_{C(\cdot)}\right)+\left(\left(\delta^N_0+R\right)*\left(\Phi^N-\Eb\left[\Phi^N\right]\right)*\hat \lambda^{N}\right)_{t+1}\\
    \end{align*}

    where $\delta ^N_0$ is the matrix sequence equal to $I^N$ if $t=0$ and zero otherwise. By taking the $\|\cdot\|_2$ norm of the last inequality and using the triangle inequality we have that 
    \begin{align}
       \left \|\hat \lambda^{N}_{t+1} \right \|_2 \leq & \left(\sum_{m=0}^{+\infty}\|\mathcal S \left(\Eb \left[\Phi^N \right]\right)\|_2^m\right)\cdot \|\psi\left(\mu_{C(\cdot)}\right)\|_2 + \sum_{k=1}^{t}\left\|\left(\delta^N_0+R\right)*\left(\Phi^N-\Eb\left[\Phi^N\right]\right)_{t+1-k}\right\|_2\left\|\hat \lambda^{N}_k\right\|_2\nonumber \\
        \leq& \frac{\sqrt N}{1-\nu} \max_{h=1,\cdots,q}\psi(\mu_h) + \sum_{k=1}^{t}\left\|\left(\delta^N_0+R\right)*\left(\Phi^N-\Eb\left[\Phi^N\right]\right)_{t+1-k}\right\|_2\left\|\hat \lambda^{N}_k\right\|_2\label{ineq:proof_lemma_hat}
    \end{align}
    where $\nu= L\max_{h=1,\cdots,q}\alpha_h \|(p \mathcal S(|\phi|))\|_2$ (\textit{cf.} Assumption \ref{ass: crucial}) and 
    \begin{align*}
\left\|\left(\delta^N_0+R\right)*\left(\Phi^N-\Eb\left[\Phi^N\right]\right)_{t+1-k}\right\|_2 \leq & \sum_{j=1}^{t-k} \|(\delta^{N}_0+R)_{t+1-k-j}\|_2 \left\|\left(\Phi^N-\Eb\left[\Phi^N\right]\right)_j\right\|_2\\
        \leq & \sum_{j=1}^{t-k} \left\|\sum_{i=0}^{+\infty}(\delta^{N}_0+R)_{i}\right\|_2 \left\|\left(\Phi^N-\Eb\left[\Phi^N\right]\right)_j\right\|_2\\
        =& \frac{1}{1-\nu} \sum_{j=1}^{t-k} \left\|\left(\Phi^N-\Eb\left[\Phi^N\right]\right)_j\right\|_2.
    \end{align*}
    Using Lemma \ref{lmm:average_adjacency}  we have that on $E^N$ which has a probability larger than $1-2\mathrm e^{-N} $ we have that 
    $$\left\|\left(\Phi^N-\Eb\left[\Phi^N\right]\right)_j\right\|_2 \leq \frac {c}{\sqrt N} \sum_{h,\ell =1}^q \left | \phi ^{\ell \to h }_j\right |,$$
    for some $c>0$. Therefore, 
$$ \left \|\hat \lambda ^{N}_{t+1} \right \|_2 \le C \sqrt N + \sum_{k=1}^t\frac{c}{\sqrt N}\sum_{h,\ell=1}^q \mathcal S \left(\left | \phi^{\ell \to h }\right |\right)  \left \|\hat \lambda ^{N}_{k} \right \|_2.$$ 
    Then, using the discrete Gr\"onwall Lemma we have 
    $$ \left \|\hat \lambda^{N}_{t+1} \right \|_2 \leq C \sqrt N \exp \left(c \frac{T}{\sqrt N}\right),$$
    hence the first result. For the uniform upper bound, we assume that Assumption \ref{ass:crucial_strong} holds. This yields that the resolvent $R$ has a matrix norm bounded by 
    $$\|R_n\|_2\leq \sum_{m=1}^{+\infty} \left(\left\| \Eb \left[\Phi^N\right]\right\|_2\right)^{(*m)}_n,$$
    which ensures that 
    \begin{align*}
        \sum_{n=1}^{+\infty} \|R_n\|_2 \leq & \sum_{m=1}^{+\infty}\sum_{n=1}^{+\infty} \left(\left\| \Eb \left[\Phi^N\right]\right\|_2\right)^{(*m)}_n\\
        \leq &\sum_{m=1}^{+\infty} \left(\mathcal S \left(\left \| \Eb \left[\Phi^N\right] \right \|_2\right)\right)^m
    \end{align*}
    which by Lemma \ref{lmm: euclidean_norm} yields
    $$\sum_{n=1}^{+\infty} \|R_n\|_2 \leq C.$$
    Inequality \eqref{ineq:proof_lemma_hat} hence writes 
    \begin{align*}
        \left \| \hat \lambda ^{N}_{t+1}\right \|_2 \leq C \sqrt N+ \sum_{k=1}^tQ_{t+1-k} \left \| \hat \lambda ^{N}_{k} \right \|_2,
    \end{align*}
    where $Q_k =\sum_{j=1}^{k-1} \|(\delta^N_0+R)_{k-j}\|_2 \left \|\left(\Phi^N-\Eb\left[\Phi^N\right]\right)_j \right\|_2$. Using Lemma \ref{lmm:average_adjacency} we have that on $E^N$
    \begin{align*}\sum_{k=1}^{+\infty} Q_k \leq &\left(\sum_{j=1}^{+\infty}\|(\delta^N_0+R)_{j}\|_2\right) \left(\sum_{j=1}^{+\infty} \left \|\left(\Phi^N-\Eb\left[\Phi^N\right]\right)_j \right\|_2\right)\\
    \leq & \frac {C}{\sqrt{N}}\sum_{h,\ell=1}^q \mathcal S \left(\left |\phi^{\ell \to h} \right |\right).
    \end{align*}
    Hence, for $N$ larger than a certain threshold $N_0$ which does not depend on $T$ we have that $\sum_{k=1}^{+\infty} Q_k <1$ and we conclude by applyin Lemma \ref{lmm:convolution} a second time.  
    \subsection{Proof of Proposition \ref{prop:lambda_hat}}
    \label{proof:lambda_hat}
       For $i \in \{1,\cdots,N\}$, we start by bounding the difference between the intensity $\lambda^{i,N}$ and the auxiliary process $\hat \lambda^{i,N}$ for a realisation $\Xi$ of the SBM: 
    \begin{align*}
        \E\left[|\lambda^{i,N }_{t+1}-\hat \lambda ^{i,N}_{t+1}|\right] \leq & L \E \left |\sum_{\ell=1}^q \alpha_\ell \frac{1}{N_\ell}\sum_{j: C(j)=\ell}  \xi_{j\to i} \sum_{k=1}^t \phi ^{\ell\to C(i)}_{t+1-k}(Y^{j,N}_k-\hat{\lambda}^{j,N}_k)\right |\\
        \leq & L \E \left |\sum_{\ell=1}^q \alpha_\ell \frac{1}{N_\ell}\sum_{j: C(j)=\ell}  \xi_{j\to i} \sum_{k=1}^t \phi ^{\ell\to C(i)}_{t+1-k}\left(\Pi_k^{j}(\lambda^{j,N}_k)-\Pi_k^{j}(\hat{\lambda}^{j,N}_k)\right)\right |\\
        &+L \E \left |\sum_{\ell=1}^q \alpha_\ell \frac{1}{N_\ell}\sum_{j: C(j)=\ell}  \xi_{j\to i} \sum_{k=1}^t \phi ^{\ell\to C(i)}_{t+1-k}\left(\Pi_k^{j}(\hat{\lambda}^{j,N}_k)-\hat{\lambda}^{j,N}_k\right)\right |
    \end{align*}
    because $Y^{j,N}_k = \Pi_k^{j}({\lambda}^{j,N}_k)$. Since $\xi_{j\to i}$ for any $(i,j) \in \{1,\cdots, N\}^2$ is built on $(\Omega, \mathcal F_U, \Pb )$ we have that 
    \begin{align*}
        \E\left[|\lambda^{i,N }_{t+1}-\hat \lambda ^{i,N}_{t+1}|\right] \leq & L\sum_{\ell=1}^q \alpha_\ell \frac{1}{N_\ell}\sum_{j: C(j)=\ell}  \xi_{j\to i} \sum_{k=1}^t \left|\phi ^{\ell\to C(i)}_{t+1-k}\right |\E \left[\E_{k-1}\left |\Pi_k^{j}(\lambda^{j,N}_k)-\Pi_k^{j}(\hat{\lambda}^{j,N}_k)\right| \right]\\
        &+L\E\left|\sum_{j=1}^N \frac{1}{N} \xi_{j\to i} \sum_{k=1}^t \phi ^{C(j)\to C(i)}_{t+1-k} \left( \Pi_k^{j}(\hat{\lambda}^{j,N}_k)-\hat{\lambda}^{j,N}_k\right)\right|\\
        \leq &L\sum_{\ell=1}^q \alpha_\ell \frac{1}{N_\ell}\sum_{j: C(j)=\ell}  \xi_{j\to i} \sum_{k=1}^t \left|\phi ^{\ell\to C(i)}_{t+1-k}\right |\E \left |\lambda^{j,N}_k-\hat{\lambda}^{j,N}_k\right| \\
        &+L\sum_{k=1}^t \E\left| \frac{1}{N} \sum_{j=1}^N\xi_{j\to i}  \phi ^{C(j)\to C(i)}_{t+1-k} \left( \Pi_k^{j}(\hat{\lambda}^{j,N}_k)-\hat{\lambda}^{j,N}_k\right)\right|.\\
    \end{align*}

    
   Using the Cauchy Schwarz inequality and the fact that for every $k \in \{0,\cdots,T\}$, the $(\Pi^j_k)_{j\in \{1,\cdots, N\}}$ are independent, we have that 
   \begin{align*}
        \E\left[|\lambda^{i,N }_{t+1}-\hat \lambda ^{i,N}_{t+1}|\right] \leq & L\sum_{\ell=1}^q \alpha_\ell \frac{1}{N_\ell}\sum_{j: C(j)=\ell}  \xi_{j\to i} \sum_{k=1}^t \left|\phi ^{\ell\to C(i)}_{t+1-k}\right |\E \left |\lambda^{j,N}_k-\hat{\lambda}^{j,N}_k\right| \\
         &+L\sum_{k=1}^t  \frac{1}{N} \left[ \E\left| \sum_{j=1}^N\xi_{j\to i}  \phi ^{C(j)\to C(i)}_{t+1-k} \left( \Pi_k^{j}(\hat{\lambda}^{j,N}_k)-\hat{\lambda}^{j,N}_k\right)\right|^2 \right]^{1/2}\\
         \leq & L\sum_{\ell=1}^q \alpha_\ell \frac{1}{N_\ell}\sum_{j: C(j)=\ell}  \xi_{j\to i} \sum_{k=1}^t \left|\phi ^{\ell\to C(i)}_{t+1-k}\right |\E \left |\lambda^{j,N}_k-\hat{\lambda}^{j,N}_k\right| \\
         &+L\sum_{k=1}^t  \frac{1}{N} \left(  \sum_{j=1}^N\xi_{j\to i}  \left(\phi ^{C(j)\to C(i)}_{t+1-k} \right)^2 \mathrm{Var}\left( \Pi_k^{j}(\hat{\lambda}^{j,N}_k)\right) \right)^{1/2}\\
         \leq & L\sum_{\ell=1}^q \alpha_\ell \frac{1}{N_\ell}\sum_{j: C(j)=\ell}  \xi_{j\to i} \sum_{k=1}^t \left|\phi ^{\ell\to C(i)}_{t+1-k}\right |\E \left |\lambda^{j,N}_k-\hat{\lambda}^{j,N}_k\right| \\
         &+L\sum_{k=1}^t  \frac{1}{N} \left(  \sum_{j=1}^N  \left(\phi ^{C(j)\to C(i)}_{t+1-k} \right)^2 \hat{\lambda}^{j,N}_k \right)^{1/2}.
   \end{align*}
   Using Lemma \ref{lemma:hat_lambda} alongside the Cauchy-Schwarz inequality we have that, on the event $E^N$:
   \begin{align*}
       \sum_{k=1}^t  \frac{1}{N} \left(  \sum_{j=1}^N  \left(\phi ^{C(j)\to C(i)}_{t+1-k} \right)^2 \hat{\lambda}^{j,N}_k \right)^{1/2} &\leq \sum_{k=1}^t  \frac{1}{N} \left(  \left\|\hat{\lambda}^{N}_k\right\|_2\left(\sum_{j=1}^N  \left(\phi ^{C(j)\to C(i)}_{t+1-k} \right)^4\right)^{1/2}  \right)^{1/2}\\
       &=\sum_{k=1}^t  \frac{1}{N} \left(  \left\|\hat{\lambda}^{N}_k\right\|_2\sqrt N\left(\sum_{\ell=1}^q  \alpha_\ell\left(\phi ^{\ell\to C(i)}_{t+1-k} \right)^4\right)^{1/2}  \right)^{1/2}\\
       &\leq \sum_{k=1}^t  \frac{1}{N} \left(  C \sqrt N \exp \left(c\frac{T}{\sqrt N}\right)\sqrt N\left(\sum_{\ell=1}^q  \alpha_\ell\left(\phi ^{\ell\to C(i)}_{t+1-k} \right)^4\right)^{1/2}  \right)^{1/2}\\
       &= C  \exp \left(c\frac{T}{\sqrt N}\right)\frac{1}{\sqrt N}\sum_{k=1}^t    \left(\sum_{\ell=1}^q  \alpha_\ell\left(\phi ^{\ell\to C(i)}_{t+1-k} \right)^4\right)^{1/4}. 
   \end{align*}
   Since $ \sum_{k=1}^t\left(\sum_{\ell=1}^q  \alpha_\ell\left(\phi ^{\ell\to C(i)}_{t+1-k} \right)^4\right)^{1/4} \leq C$ independent of $t$ (yet potentially dependent on $q$), we get that 
   \begin{align}
   \label{ineq:lambda_hat}
        \E\left[|\lambda^{i,N }_{t+1}-\hat \lambda ^{i,N}_{t+1}|\right] \leq & C \frac{\exp\left(c\frac{T}{\sqrt N}\right)}{\sqrt N} +L\sum_{\ell=1}^q \alpha_\ell \frac{1}{N_\ell}\sum_{j: C(j)=\ell}  \xi_{j\to i} \sum_{k=1}^t \left|\phi ^{\ell\to C(i)}_{t+1-k}\right |\E \left |\lambda^{j,N}_k-\hat{\lambda}^{j,N}_k\right|, 
   \end{align}
   on $E^N$. 
   Following the exact same lines of the proof of Lemma \ref{lemma:hat_lambda} we have that 
   \begin{align*}
       \left\|\E \left[\left|\lambda_{t}^{N}-\hat \lambda_t^{N}\right|\right]\right\|_2\leq C \exp\left(c\frac{T}{\sqrt N}\right)
   \end{align*}
   on $E^N$ who has a probability $\Pb$ larger than $1-2\exp(-2N)$.
   \subsection{Proof of Proposition \ref{prop:hat_bar}}
   \label{proof:hat_bar}
  Under matrix format, we have that 
    \begin{align*}
        \hat \lambda^{N}_{t+1}=\psi\left(\mu_{C(\cdot)}+\sum_{k=1}^{t}\Phi^N_{t+1-k}\hat \lambda_k^{N}\right),
    \end{align*}
    and
    \begin{align*}
        \bar \lambda^{N}_{t+1}=\psi\left(\mu_{C(\cdot)}+\sum_{k=1}^t\Eb\left[\Phi^N_{t+k-1}\right]\bar \lambda^{N}_{k}\right).
    \end{align*}
    Hence, using the Lipschitz property of $\psi$, we have that
    \begin{align*}
        \left| \bar \lambda^{N}_{t+1}-  \hat \lambda^{N}_{t+1}\right| \preceq &  \left|\sum_{k=1}^t \Eb\left[\Phi^N_{t+1-k}\right]\bar \lambda^{N}_{k}-\Phi^N_{t+1-k}\hat \lambda_k^{N}\right|\\
        \preceq &\sum_{k=1}^t \Eb\left[\Phi^N_{t+1-k}\right]\left|\bar \lambda^{N}_{k}- \hat \lambda_k^{N}\right| +\left| \sum_{k=1}^t \left(\Eb\left[\Phi^N_{t+1-k}\right]-\Phi^N_{t+1-k}\right) \hat \lambda^{N}_k \right| \\
        =&\sum_{k=1}^t \Eb\left[\Phi^N_{t+1-k}\right]\left|\bar \lambda^{N}_{k}- \hat \lambda_k^{N}\right| +W_{t+1}
    \end{align*}
    where $W_{t+1}=\left| \sum_{k=1}^t \left(\Eb\left[\Phi^N_{t+1-k}\right]-\Phi^N_{t+1-k}\right) \hat \lambda^{N}_k \right|$. Hence, using Lemma \ref{lmm:convolution}, we have that 
    \begin{equation}
    \label{ineq:lambda_bar_hat}\left| \bar \lambda^{N}_{t+1}-  \hat \lambda^{N}_{t+1}\right| \preceq \left(\left(\delta^N_0+R\right)*W\right)_{t+1},
    \end{equation}
     with  $R_t=\sum_{m=1}^{+\infty}\left(\Eb \left[ \Phi^N\right]\right)^{(*m)}_t$.
    By taking the Euclidean norm of the last inequality, we have that 
    \begin{align}
        \left\| \bar \lambda^{N}_{t+1}-  \hat \lambda^{N}_{t+1}\right \|_2 \leq & \left \|\left(\left(\delta^N_0+R\right)*W\right)_t \right \|_2 \nonumber \\
        \leq & \sum_{k=1}^{t+1} \left \|\left(\delta^N_0+R\right)_{t+1-k} \right\|_2 \left \| W_k\right \|_2,\label{ineq:hat_bar}
    \end{align}
    where
    \begin{align*}
        \left \| W_{t+1}\right \|_2 &= \left \| \sum_{k=1}^t \left(\Eb\left[\Phi^N_{t+1-k}\right]-\Phi^N_{t+1-k}\right) \hat \lambda^{N}_k \right \|_2\\
        & \leq  \sum_{k=1}^t \left \| \left(\Eb\left[\Phi^N_{t+1-k}\right]-\Phi^N_{t+1-k}\right) \hat \lambda^{N}_k \right \|_2\\
        & \leq \sum_{k=1}^t \left \| \left(\Eb\left[\Phi^N_{t+1-k}\right]-\Phi^N_{t+1-k}\right) \right \|_2 \left \| \hat \lambda^{N}_k \right \|_2.\\
    \end{align*}
    Using Lemmas \ref{lemma:hat_lambda} and \ref{lmm:average_adjacency}, we have that on $E^N$ whose probability $\Pb$ is larger than $1-2\exp(-N)$ 
    \begin{align*} \left \| \left(\Eb\left[\Phi^N_{t+1-k}\right]-\Phi^N_{t+1-k}\right) \right \|_2 \left \| \hat \lambda^{N}_k \right \|_2 \leq &C \sum_{h,\ell=1}^q \left |\phi^{\ell \to h}_{t+1-k}\right|\frac{1}{\sqrt N} \sqrt N \exp \left(\frac{T}{\sqrt N}\right)\\
    \leq &C \sum_{h,\ell=1}^q \left |\phi^{\ell \to h}_{t+1-k}\right|\exp \left(\frac{T}{\sqrt N}\right),
    \end{align*}
    which by summation yields 
    \begin{align*}
        \|W_{t+1}\|_2& \leq \sum_{k=1}^t \left \| \left(\Eb\left[\Phi^N_{t+1-k}\right]-\Phi^N_{t+1-k}\right) \right \|_2 \left \| \hat \lambda^{N}_k \right \|_2\\
        & \leq C \sum_{k=1}^t\sum_{h,\ell=1}^q \left |\phi^{\ell \to h}_{t+1-k}\right|\exp \left(c\frac{T}{\sqrt N}\right)\\
        &\leq C \exp\left(c\frac{T}{\sqrt N}\right).
    \end{align*}
Hence Inequality \eqref{ineq:hat_bar} becomes 
\begin{align*} 
\left\| \bar \lambda^{N}_{t+1}-  \hat \lambda^{N}_{t+1}\right \|_2 & \leq C \exp \left(c \frac{T}{\sqrt N}\right) \sum_{k=1}^{t+1} \left \|\left(\delta^N_0+R\right)_{t+1-k} \right\|_2 \\
&\leq C \exp \left(c \frac{T}{\sqrt N}\right) \left(1+\sum_{k=1}^{t+1} \left \|\sum_{j=1}^{+\infty}R_{j} \right\|_2\right)\\
&\leq C \exp \left(c \frac{T}{\sqrt N}\right) \left(1+T\right)
\end{align*}
because $\|\mathcal S \left(\Eb \left[\Phi^N\right]\right)\|_2<1$ (\textit{cf.} Assumption \ref{ass: crucial} and Lemma \ref{lmm: euclidean_norm}). \\
 For the time averaged upper bound, we sum both sides of Inequality \eqref{ineq:lambda_bar_hat} for $t$ ranging from $1$ to $T$ to obtain 
    \begin{align*}
        \sum_{t=1}^T \left| \bar \lambda^{N}_{t}-  \hat \lambda^{N}_{t}\right|  \preceq & \left(\sum_{j=0}^{+\infty}(\delta^N_0+R)_j\right) \left(\sum_{k=1}^TW_k\right)\\
        =&\left(\sum_{m=0}^{+\infty}\mathcal S(\Eb \left[\Phi^N\right])^m\right) \left(\sum_{k=1}^TW_k\right).\\
    \end{align*}
    Taking the Euclidean norm of the last inequality yields
    \begin{align*}
        \left \| \sum_{t=1}^T \left| \bar \lambda^{N}_{t}-  \hat \lambda^{N}_{t}\right|  \right\|_2\leq & \left \|\left(\sum_{m=0}^{+\infty}\mathcal S(\Eb \left[\Phi^N\right])^m\right) \left(\sum_{k=1}^TW_k\right) \right \|_2\\
        \leq & \left(\sum_{m=0}^{+\infty}\left \|\mathcal S(\Eb \left[\Phi^N\right])\right \|^m_2\right)\left(\sum_{k=1}^T \left \|W_k \right \|_2 \right),
    \end{align*}
    thus 
    $$ \frac{1}{T} \left \| \sum_{t=1}^T \left| \bar \lambda^{N}_{t}-  \hat \lambda^{N}_{t}\right|  \right\|_2 \leq C \exp \left(c \frac{T}{\sqrt N}\right).$$

If Assumption \ref{ass:crucial_strong} holds, using the sub-multiplicativity of the Euclidean matrix norm we have that $\|W_k\|_2\le C$ for all $t\in \N$ and  $ \|R_t\|_2\le \sum_{m=1}^{+\infty}\left( \left \|\Eb \left[ \Phi^N\right] \right \|_2\right)^{(*m)}_t$, thus
\begin{align*}
\left\| \bar \lambda^{N}_{t+1}-  \hat \lambda^{N}_{t+1}\right \|_2 \leq &C  \sum_{k=1}^{t+1} \left \|\left(\delta^N_0+R\right)_{t+1-k} \right\|_2\\
\leq &C \left(1+\sum_{k=1}^{+\infty} \left \|R_{k} \right\|_2\right)
\end{align*}
and we conclude because $ \mathcal S\left(\| \left(\Eb \left[\Phi^N\right]\right)\|_2\right)<1$ as per Lemma \ref{lmm: euclidean_norm}.
    \subsection{Proof of Theorem \ref{thm:main_l_2}}
\label{proof:main_l_2}
    Using the Cauchy-Schwarz inequality we have that 
    $$\frac{1}{N}\sum _{i=1}^N \E \left[ \left| Y_t^{i,N}-\bar Y_t^{i,N}\right|\right] \leq \frac{1}{\sqrt N}\left(\sum _{i=1}^N \E \left[ \left| Y_t^{i,N}-\bar Y_t^{i,N}\right|\right]^2\right)^{1/2}$$
    and since $Y_t^{i,N}=\Pi_t \left(\lambda_t^{i,N}\right)$ and $\bar Y^{i,N}_t =\Pi _t\left(\bar \lambda_t^{i,N}\right)=\Pi _t\left(\bar \lambda_t^{C(i)}\right)  $ , 
    \begin{align*}
        \frac{1}{N}\sum _{i=1}^N \E \left[ \left| Y_t^{i,N}-\bar Y_t^{i,N}\right|\right] &\leq\frac{1}{\sqrt N}\left(\sum _{i=1}^N \E \left[ \left| \lambda_t^{i,N}-\bar \lambda ^{C(i)}_t\right|\right]^2\right)^{1/2}\\
        &=\frac{1}{\sqrt N}\left\| \E \left[ \left| \lambda_t^{N}-\bar \lambda ^{N}_t\right| \right]\right \|_2\\
        &\leq \frac{1}{\sqrt N}\left\| \E \left[ \left| \lambda_t^{N}-\hat \lambda ^{N}_t\right| \right]\right \|_2 +  \frac{1}{\sqrt N}\left\| \hat\lambda_t^{N}-\bar \lambda ^{N}_t\right \|_2
    \end{align*}
    which, using Propositions \ref{prop:hat_bar} and \ref{prop:lambda_hat} yields the result.\\
   \subsection{Proof of Proposition \ref{prop:union_bound}}
   \label{proof:union_bound}
For a given $t=1,\cdots,T$ and $k = 1,\cdots,t $ the definition of the infinity norm and the matrix product yield that

\begin{align*}
     \left \| \left (\Eb [\Phi^N_{t-k+1}]-\Phi^N_{t-k+1}\right) \bar \lambda^{N}_k \right \|_\infty &=\max_{i=1,\cdots,N} \left | \left(\left (\Eb [\Phi^N_{t-k+1}]-\Phi^N_{t-k+1}\right) \bar \lambda^{N}_k \right )_i \right |\\
     &= L\max_{i=1,\cdots,N} \left | \sum_{j=1}^N \frac{\xi^{j \to i}-p^{C(j)\to C(i)}}{N} \phi_{t-k+1}^{C(j)\to C(i)} \bar \lambda^{j,N}_k\right |\\
     & \leq L \max_{i=1,\cdots,N} \sum_{\ell=1}^q \left | \phi_{t-k+1}^{\ell \to C(i)} \bar \lambda^{\ell}_k \right | \left |\sum_{j: C(j)=\ell} \frac{\xi^{j \to i}-p^{\ell\to C(i)}}{N}\right |\\
     & \leq L \max_{h,\ell=1,\cdots,q}\left | \phi_{t-k+1}^{\ell \to h} \bar \lambda^{\ell}_k \right | \max_{i=1,\cdots,N} \sum_{\ell=1}^q  \left |\sum_{j: C(j)=\ell} \frac{\xi^{j \to i}-p^{\ell\to C(i)}}{N}\right |.\\
\end{align*}

Let $u>0$ be a threshold. Using the independence of the edges $\xi^{j\to i}$, we have that 
\begin{align*}
    \Pb \Bigg( \max_{i=1,\cdots,N} \sum_{\ell=1}^q \alpha_\ell &\left |\sum_{j: C(j)=\ell} \frac{\xi_{j \to i}-p_{\ell\to C(i)}}{N_\ell}\right | \leq u \Bigg)\\
    &= \prod_{i=1}^N \Pb \left(\sum_{\ell=1}^q \alpha_\ell  \left |\sum_{j: C(j)=\ell} \frac{\xi_{j \to i}-p_{\ell\to C(i)}}{N_\ell }\right | \leq \sum_{\ell=1}^q \alpha_\ell u\right)\\
    & \geq  \prod_{i=1}^N \Pb \left(  \left |\sum_{j: C(j)=\ell} \frac{\xi_{j \to i}-p_{\ell\to C(i)}}{N_\ell }\right | \leq  u,\quad \forall \ell=1,\cdots q\right)\\
    &= \prod_{i=1}^N \prod_{\ell=1}^q\Pb \left(  \left |\sum_{j: C(j)=\ell} \frac{\xi_{j \to i}-p_{\ell\to C(i)}}{N_\ell }\right | \leq  u\right).\\
\end{align*}
Hoeffding's concentration inequality yields
\begin{align*}
    \Pb \Bigg( \max_{i=1,\cdots,N} \sum_{\ell=1}^q \alpha_\ell \left |\sum_{j: C(j)=\ell} \frac{\xi_{j \to i}-p_{\ell\to C(i)}}{N_\ell}\right | \leq u \Bigg)& \geq \prod_{i=1}^N \prod_{\ell=1}^q \left(1-\Pb \left(  \left |\sum_{j: C(j)=\ell} \frac{\xi_{j \to i}-p_{\ell\to C(i)}}{N_\ell }\right | \geq  u\right)\right)\\
   & \geq \prod_{i=1}^N \prod_{\ell=1}^q \left(1- 2 \exp \left(-2u^2N_\ell\right)\right)\\
   &=\exp \left(\sum_{i=1}^N \sum_{\ell=1}^q\ln \left (1- 2 \exp \left(-2u^2N_\ell\right)\right) \right)\\
   &=\exp \left( N\sum_{\ell=1}^q\ln \left (1- 2 \exp \left(-2u^2N_\ell\right)\right) \right).
\end{align*}
Using the lower bound $\ln (1-x)\geq \frac{-x}{\sqrt{1-x}}$ for $x \in (0,1)$ we have 
\begin{align*}
     \Pb \Bigg( \max_{i=1,\cdots,N} \sum_{\ell=1}^q \alpha_\ell \left |\sum_{j: C(j)=\ell} \frac{\xi_{j \to i}-p_{\ell\to C(i)}}{N_\ell}\right | \leq u \Bigg)& \geq \exp \left( N\sum_{\ell=1}^q\ln \left (1- 2 \exp \left(-2u^2N_\ell\right)\right) \right)\\
     &=\exp \left( N\sum_{\ell=1}^q \frac{ -2 \exp \left(-2u^2N_\ell\right)}{\sqrt{1- 2 \exp \left(-2u^2N_\ell\right)}}\right).\\
\end{align*}
For a given $\varepsilon \in (0, 1/2)$, we let $u=N^{-1/2+\varepsilon}$, which yields 
\begin{align*}
  \Pb \Bigg( \max_{i=1,\cdots,N} \sum_{\ell=1}^q \alpha_\ell \left |\sum_{j: C(j)=\ell} \frac{\xi_{j \to i}-p_{\ell\to C(i)}}{N_\ell}\right | \leq \frac{N^\varepsilon}{\sqrt N} \Bigg)& \geq \exp \left( N\sum_{\ell=1}^q \frac{ -2 \exp \left(-2 \alpha_\ell N^{2\varepsilon}\right)}{\sqrt{1- 2 \exp \left(-2 \alpha_\ell N^{2\varepsilon}\right)}}\right).
\end{align*}
If $N\geq \left(\frac{\ln(8/3)}{2\alpha_{\min}}\right)^{\frac{1}{2\varepsilon}}$, then $\sqrt{1-2\exp(-2\alpha_\ell N^{2\varepsilon})}\geq \frac{1}{2}$ for all $\ell=1,\cdots,q$ and therefore 
\begin{align*}
    \Pb \Bigg( \max_{i=1,\cdots,N} \sum_{\ell=1}^q \alpha_\ell \left |\sum_{j: C(j)=\ell} \frac{\xi_{j \to i}-p_{\ell\to C(i)}}{N_\ell}\right | \leq \frac{N^\varepsilon}{\sqrt N} \Bigg)& \geq \exp \left( N\sum_{\ell=1}^q \frac{ -2 \exp \left(-2 \alpha_\ell N^{2\varepsilon}\right)}{\sqrt{1- 2 \exp \left(-2 \alpha_\ell N^{2\varepsilon}\right)}}\right) \\
    &\geq \exp \left( N\sum_{\ell=1}^q  -4 \exp \left(-2 \alpha_\ell N^{2\varepsilon}\right)\right)\\
    &\geq \exp \left(-4 Nq  \exp \left(-2 \alpha_{\max} N^{2\varepsilon}\right)\right)\\
    &\geq 1-4 Nq  \exp \left(-2 \alpha_{\max} N^{2\varepsilon}\right).
\end{align*}
Therefore, with a probability $\Pb$ larger than $ 1-4 Nq  \exp \left(-2 \alpha_{\max} N^{2\varepsilon}\right)$ we have that 
$$\left \| \left (\Eb [\Phi^N_{t-k+1}]-\Phi^N_{t-k+1}\right) \bar \lambda^{N}_k \right \|_\infty \leq L \max_{h,\ell=1,\cdots,q} \left | \phi^{\ell\to h}_{t-k+1} \bar \lambda^\ell_k\right| \frac{N^\varepsilon}{\sqrt N} $$
for any $t=1,\cdots,T$ and $k=1,\cdots,t$
and the result follows. 

\subsection{Proof of Proposition \ref{prop:intensity_uniform}}
\label{proof:intensity_uniform}
 First, we prove an analog of Lemma \ref{lemma:hat_lambda} for the uniform norm. Following the same lines as the proof of the aforementioned lemma we have that, 
    \begin{align*}\hat \lambda ^{N}_{t+1}\preceq &\left(\sum_{m=0}^{+\infty}\mathcal S\left(\Eb \left[\Phi^N\right]\right)^m\right)\cdot\psi\left(\mu_{C(\cdot)}\right)+\left(\left(\delta^N_0+R\right)*\left(\Phi^N-\Eb\left[\Phi^N\right]\right)*\hat \lambda^{N}\right)_{t+1}\\
    = &\left(\sum_{m=0}^{+\infty}\mathcal S\left(\Eb \left[\Phi^N\right]\right)^m\right)\cdot\psi\left(\mu_{C(\cdot)}\right)+\sum_{k=1}^{t} \left(\left(\delta^N_0+R\right)*\left(\Phi^N-\Eb\left[\Phi^N\right]\right)\right)_{t+1-k}\hat \lambda^{N}_k.\\
    \end{align*}
    Using the triangular inequality and the consistence of the norms we have that 
    \begin{align*}
       \left \|\hat \lambda^{N}_{t+1} \right \|_\infty \leq & \left(\sum_{m=0}^{+\infty}\|\mathcal S\left(\Eb \left[\Phi^N\right]\right)\|_\infty^m\right)\cdot \|\psi\left(\mu_{C(\cdot)}\right)\|_\infty + \sum_{k=1}^{t}\left\|\left(\delta^N_0+R\right)*\left(\Phi^N-\Eb\left[\Phi^N\right]\right)_{t-k}\right\|_{2\to \infty}\left\|\hat \lambda^{N}_k\right\|_2\\
        \leq& \frac{\max_{h=1,\cdots,q}\psi(\mu_h)}{1-\nu'}  + \sum_{k=1}^{t}\left\|\left(\delta^N_0+R\right)*\left(\Phi^N-\Eb\left[\Phi^N\right]\right)_{t-k}\right\|_{2\to \infty}\left\|\hat \lambda^{N}_k\right\|_2
    \end{align*}
    with $\nu'$ is given in Assumption \ref{ass:uniform}. Using the obvious inequality $\|AB\|_{2\to \infty}\leq \|A\|_\infty \|B\|_{2\to \infty}$,
\begin{align*}
        \left\|\left(\delta^N_0+R\right)*\left(\Phi^N-\Eb\left[\Phi^N\right]\right)_{t-k}\right\|_{2\to \infty} \leq & \sum_{j=1}^{t-k} \|(\delta^{N}_0+R)_{t-k-j}\|_\infty \left\|\left(\Phi^N-\Eb\left[\Phi^N\right]\right)_j\right\|_{2\to \infty}\\
        \leq & \sum_{j=1}^{t-k} \left\|\sum_{i=0}^{+\infty}(\delta^{N}_0+R)_{i}\right\|_\infty \left\|\left(\Phi^N-\Eb\left[\Phi^N\right]\right)_j\right\|_{2\to \infty}\\
        =& \frac{1}{1-\nu'} \sum_{j=1}^{t-k} \left\|\left(\Phi^N-\Eb\left[\Phi^N\right]\right)_j\right\|_{2\to \infty}.
\end{align*}
Since for any given matrix $A$ we have that $\|A\|_{2\to \infty}\leq \|A\|_2$ (\textit{cf.} \cite{Cape}) and using Lemma \ref{lmm:average_adjacency}, there exists an absolute constant $C>0$ such that 
\begin{align*}
    \left \|\left(\Phi^N-\Eb\left[\Phi^N\right]\right)_j \right \|_{2 \to \infty} \leq \frac{C}{\sqrt N} \sum_{h,\ell=1}^q \left | \phi^{\ell \to h}_j\right |
\end{align*}
on the event $E^N$ whose probability $\Pb$ is at least $1-2\exp(-N)$. We therefore have that 
\begin{align*}
\left \| \hat \lambda^{N}_{t+1}\right \|_\infty \leq & C+\sum_{k=1}^t \frac{C}{\sqrt N} \sum_{h,\ell=1}^q  \mathcal S(\phi^{\ell \to h}) \left\| \hat \lambda_k^{N}\right \|_2\\
\leq & C+\sum_{k=1}^t \frac{C}{\cancel{\sqrt N}} \sum_{h,\ell=1}^q  \mathcal S(\phi^{\ell \to h})\cancel{\sqrt N} \exp\left(c \frac{T}{\sqrt N}\right)
\end{align*}
where the last upper bound comes from Lemma \ref{lemma:hat_lambda}. Hence, 
$$\left \| \hat \lambda^{N}_{t+1}\right \|_\infty \leq CT\exp\left(c \frac{T}{\sqrt N}\right)$$
for $t \in \{0,\cdots,T\}$ on $E^N$.\\
We point out that if we work under the stronger Assumption \ref{ass:uniform_strong}, we have that 
$$\left \| \hat \lambda^{N}_{t+1}\right \|_\infty\le C$$
on $E^N$ for $t \in \N$. \\
 Under matrix form, Inequality \eqref{ineq:lambda_hat} writes
\begin{align*}
    \E \left[|\lambda^{N}_{t+1}-\hat \lambda ^{N}_{t+1}|\right]\preceq & C \frac{\exp\left(c \frac{T}{\sqrt N}\right)}{\sqrt {N}} \boldsymbol{1} + \sum_{k=1}^t \Phi^N_{t+1-k} \E \left[|\lambda^{N}_{k}-\hat \lambda ^{N}_{k}|\right]\\
    \preceq & C \frac{\exp\left(c \frac{T}{\sqrt N}\right)}{\sqrt {N}} \boldsymbol{1} + \sum_{k=1}^t \left( \Phi^N_{t+1-k} -\Eb \left[\Phi^N_{t+1-k}\right]\right)\E \left[|\lambda^{N}_{k}-\hat \lambda ^{N}_{k}|\right]\\
    &+\sum_{k=1}^t \Eb \left[\Phi^N_{t+1-k}\right] \E \left[|\lambda^{N}_{k}-\hat \lambda ^{N}_{k}|\right].
\end{align*}
which by virtue of Lemma \ref{lmm:convolution} yields 
\begin{align*}
    \E \left [\left |\lambda^{N}_{t+1}-\hat \lambda ^{N}_{t+1} \right |\right] \preceq &  C \frac{\exp\left(c \frac{T}{\sqrt N}\right)}{\sqrt {N}} \left(\sum_{m=0}^{+\infty} \mathcal S(\Eb [\Phi^N])^m \right) \cdot \boldsymbol{1}\\
    &+ \sum_{k=1}^t \left((\delta ^N_0+R)*(\Phi^N-\Eb[\Phi^N])\right)_{t+1-k}\cdot \E \left [\left |\lambda^{N}_{k}-\hat \lambda ^{N}_{k} \right |\right]
\end{align*}

Which after taking the infinity norm and using the triangle inequality yields 
\begin{align*}
      \left \|\E \left [\left |\lambda^{N}_{t+1}-\hat \lambda ^{N}_{t+1} \right |\right] \right \|_\infty\leq &  C \frac{\exp\left(c \frac{T}{\sqrt N}\right)}{\sqrt {N}} \left(\sum_{m=0}^{+\infty} \left \|  \mathcal S(\Eb[\Phi^N])\right \|_\infty^m \right) \\
    &+ \sum_{k=1}^t \left \|\left((\delta ^N_0+R)*(\Phi^N-\Eb[\Phi^N])\right)_{t-k} \right \|_{2 \to \infty}\left \| \E \left [\left |\lambda^{N}_{k}-\hat \lambda ^{N}_{k} \right |\right]\right \|_2
\end{align*}
Using Proposition \ref{prop:lambda_hat} we obtain that 
\begin{align*}
     \left \|\E \left [\left |\lambda^{N}_{t+1}-\hat \lambda ^{N}_{t+1} \right |\right] \right \|_\infty\leq &  C \frac{\exp\left(c \frac{T}{\sqrt N}\right)}{\sqrt {N}} \left(\sum_{m=0}^{+\infty} \left \|  \mathcal S(\Eb[\Phi^N])\right \|_\infty^m \right) + \sum_{k=1}^t \frac{C}{\sqrt N} \exp \left( c \frac{T}{\sqrt N}\right)\\
     \leq & \frac{ C(1+T)}{\sqrt N} \exp \left (c\frac{T}{\sqrt N}\right).
\end{align*}

We now bound the difference between the auxiliary process $\hat \lambda^{N}$ with frozen graph randomness and the mean-field deterministic intensity $\bar \lambda ^{N}$. Following the proof of Proposition \ref{prop:hat_bar}, Inequality  
\begin{equation*}
    \left| \bar \lambda^{N}_{t+1}-  \hat \lambda^{N}_{t+1}\right| \preceq \left(\left(\delta^N_0+R\right)*W\right)_t,
    \end{equation*}
    holds, where $W_t=\left| \sum_{k=1}^t \left(\Eb\left[\Phi^N_{t+1-k}\right]-\Phi^N_{t+1-k}\right) \hat \lambda^{N}_k \right|$ and $R_t=\sum_{m=1}^{+\infty}\left(\Eb \left[ \Phi^N\right]\right)^{(*m)}_t$. Taking the infinity norm yields
    \begin{align*}
        \left \| \bar \lambda^{N}_{t+1}-  \hat \lambda^{N}_{t+1}\right \|_\infty \leq \sum_{k=1}^t\left\|\left(\delta^N_0+R\right)_{t-k} \right \|_\infty \left \|W_k\right \|_\infty,
    \end{align*}
    with 
 \begin{align*}
        \|W_k\|_\infty& \leq \sum_{k=1}^t \left \| \left(\Eb\left[\Phi^N_{t+1-k}\right]-\Phi^N_{t+1-k}\right) \hat \lambda^{N}_k \right \|_{\infty}\\
        &\leq  \sum_{k=1}^t \left \| \left(\Eb\left[\Phi^N_{t+1-k}\right]-\Phi^N_{t+1-k}\right) \bar \lambda^{N}_k \right \|_{\infty}+\left \| \left(\Eb\left[\Phi^N_{t+1-k}\right]-\Phi^N_{t+1-k}\right) \left(\hat \lambda_k^{N}-\bar \lambda^{N}_k \right)\right \|_{\infty}\\
        &\leq  \sum_{k=1}^t \left \| \left(\Eb\left[\Phi^N_{t+1-k}\right]-\Phi^N_{t+1-k}\right) \bar \lambda^{N}_k \right \|_{\infty}+\left \| \Eb\left[\Phi^N_{t+1-k}\right]-\Phi^N_{t+1-k}\right \|_{2 \to \infty} \left \|\hat \lambda_k^{N}-\bar \lambda^{N}_k\right \|_{2}\\
        \end{align*}
        which according to Proposition \ref{prop:hat_bar} becomes
        $$\|W_k\|_\infty \leq \sum_{k=1}^t \left \| \left(\Eb\left[\Phi^N_{t+1-k}\right]-\Phi^N_{t+1-k}\right) \bar \lambda^{N}_k \right \|_{\infty} + \frac{CT \exp \left(c \frac{T}{\sqrt N}\right)}{\sqrt N} \sum_{h,\ell=1}^q \left \|\phi^{\ell \to h}\right \|_1.$$

                Using Proposition \ref{prop:union_bound} in conjunction with the fact that $ \left(\bar \lambda_k\right)_{k \in \N}$ converges in $k$ (Lemma \ref{prop:cvg_bar_k}) we have that 
        $$\|W_k\|_\infty \leq C \left(\frac{N^{\varepsilon}+T \exp \left(c\frac{T}{\sqrt N} \right)}{\sqrt N}\right),$$
        under the event $F^N \cap E^N$ whose probability $\Pb$ is larger than $1-4Nq \exp(-2\alpha_{\max}N^{2\varepsilon})-2\exp(-N)$. Hence,
        \begin{align*}
              \left \| \bar \lambda^{N}_{t+1}-  \hat \lambda^{N}_{t+1}\right \|_\infty \leq &\sum_{k=1}^t\left\|\left(\delta^N_0+R\right)_{t-k} \right \|_\infty \left \|W_k\right \|_\infty\\
              \leq & CT\left(\frac{N^{\varepsilon}+T \exp \left(c\frac{T}{\sqrt N} \right)}{\sqrt N}\right).
        \end{align*}
        And therefore, under Assumptions \ref{ass: crucial} and \ref{ass:uniform}, 

        $$\left \|\E \left [\left |\lambda^{N}_{t+1}-\bar \lambda ^{N}_{t+1} \right |\right] \right \|_\infty\leq CT\left(\frac{N^{\varepsilon}+T \exp \left(c\frac{T}{\sqrt N} \right)}{\sqrt N}\right)$$
        with a probability $\Pb$ larger than $1-4Nq \exp(-2\alpha_{\max}N^{2\varepsilon})-2\exp(-N)$. 

        If the stronger Assumptions \ref{ass:crucial_strong} and \ref{ass:uniform_strong} hold, then we can bound the matrix norms of the resolvents at any time and we obtain the bound that does not depend on $T$. 

\section{Additional simulation results}\label{sec:add_sims}
Additional figures to support Section \ref{sec:verification}.

\subsection{Intermediary results for dependence on $N$}
Figure \ref{fig:N_two_norms} demonstrates the intermediary convergence results in $N$ for the true intensity and the auxiliary intensity, and the auxiliary intensity and the mean-field intensity, in Eucliead norm. Specifically, they show that the dependencies remain approximately constant and bounded across network sizes, as shown in in Propositions \ref{prop:lambda_hat} and \ref{prop:hat_bar}.
\begin{figure}[h!]
    \centering
    \includegraphics[width=0.9\linewidth]{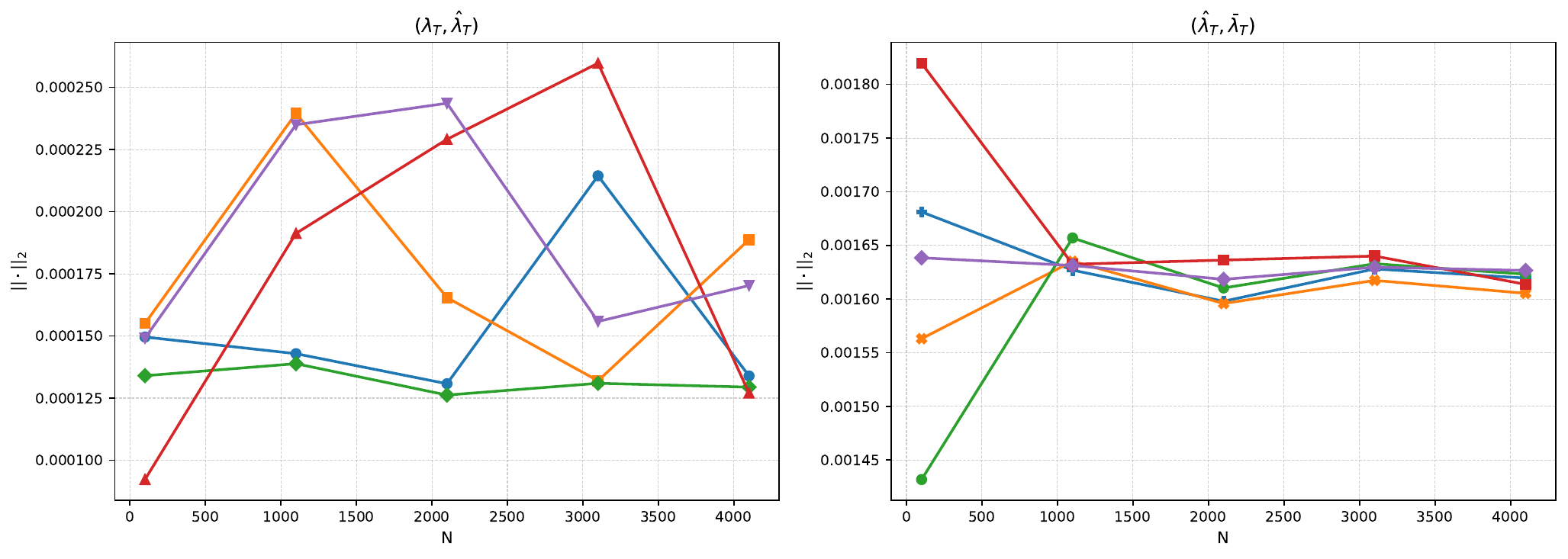}
    \caption{Euclidean norms of intensity differences as a function of the number of nodes \( N \), computed at final time \( T = 100 \), averaged over 100 different runs, using \(\boldsymbol{\alpha} = [0.5, 0.3, 0.2]\) and the assortative SBM set up. Each line corresponds to a different graph realization.}
    \label{fig:N_two_norms}
\end{figure}
Figure \ref{fig:N_inf_norms} demonstrates the intermediary convergence results in $N$ for the true intensity and the auxiliary intensity, and the auxiliary intensity and the mean-field intensity, in Euclidean norm; a necessary stepping stone to the result of Theorem \ref{thm:main_uniform}.
\begin{figure}[h!]
    \centering
    \includegraphics[width=0.9\linewidth]{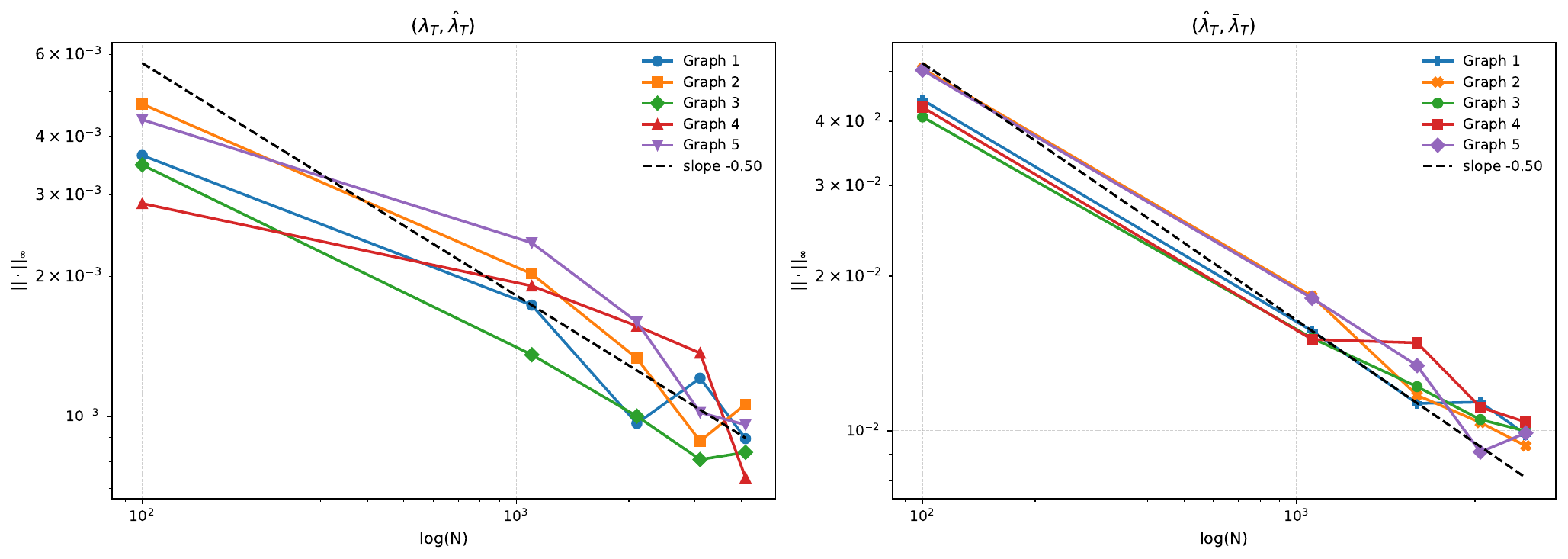}
    \caption{Uniform norms of intensity differences as a function of the number of nodes \( N \), computed at final time \( T = 100 \), averaged over 100 different runs, using \(\boldsymbol{\alpha} = [0.5, 0.3, 0.2]\). Each line corresponds to a different graph realization.}
    \label{fig:N_inf_norms}
\end{figure}

\subsection{Dependence on the time horizon $T$}
Figures \ref{fig:appendix-mean-distances-linear-t} and \ref{fig:appendix-max-distances-linear-t} demonstrate that for fixed $N$, discrepancies between the true and mean-field process remain approximately consistent across the considered time horizons.
 \begin{figure}[h!]
     \centering
     \includegraphics[width=\textwidth]{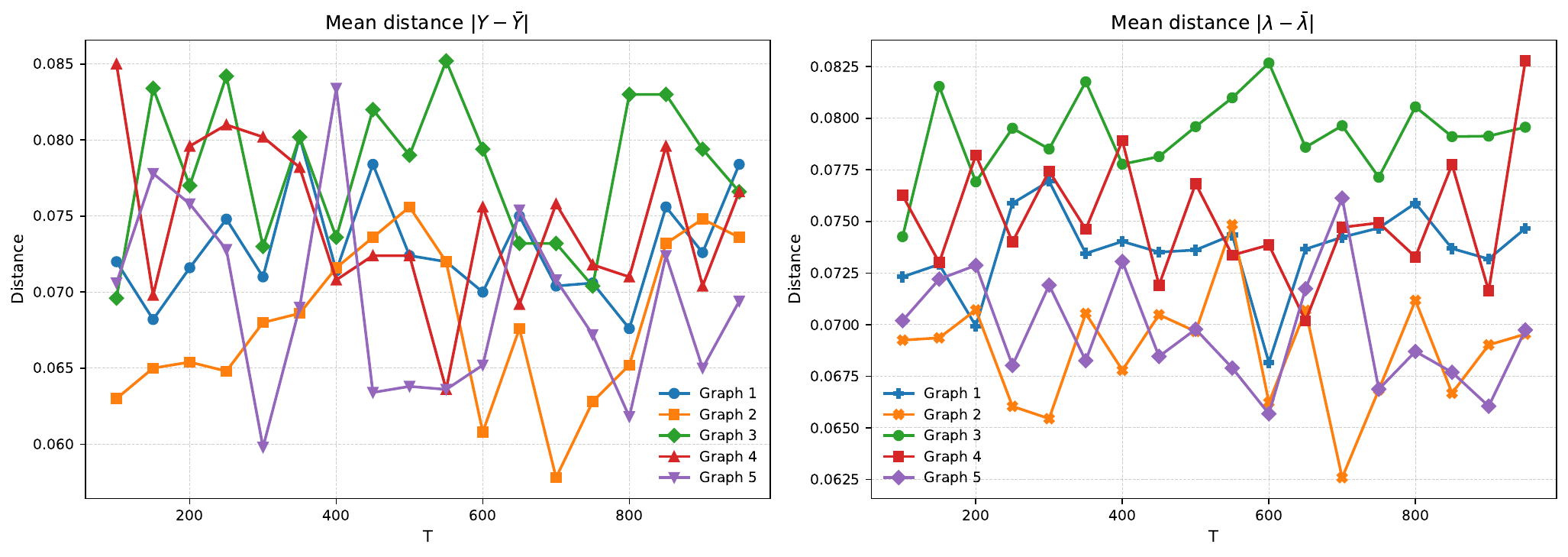}
        \caption{Mean distances as a function of the time horizon $T$, averaged over 100 runs, for five graph realisations of size $N=500$. The left panel shows the mean distance $|Y-\bar{Y}|$ between the true and mean-field processes, and the right panel shows the mean distance $|\lambda-\bar{\lambda}|$.}
     \label{fig:appendix-mean-distances-linear-t}
 \end{figure}

 \begin{figure}[h!]
     \centering
     \includegraphics[width=\textwidth]{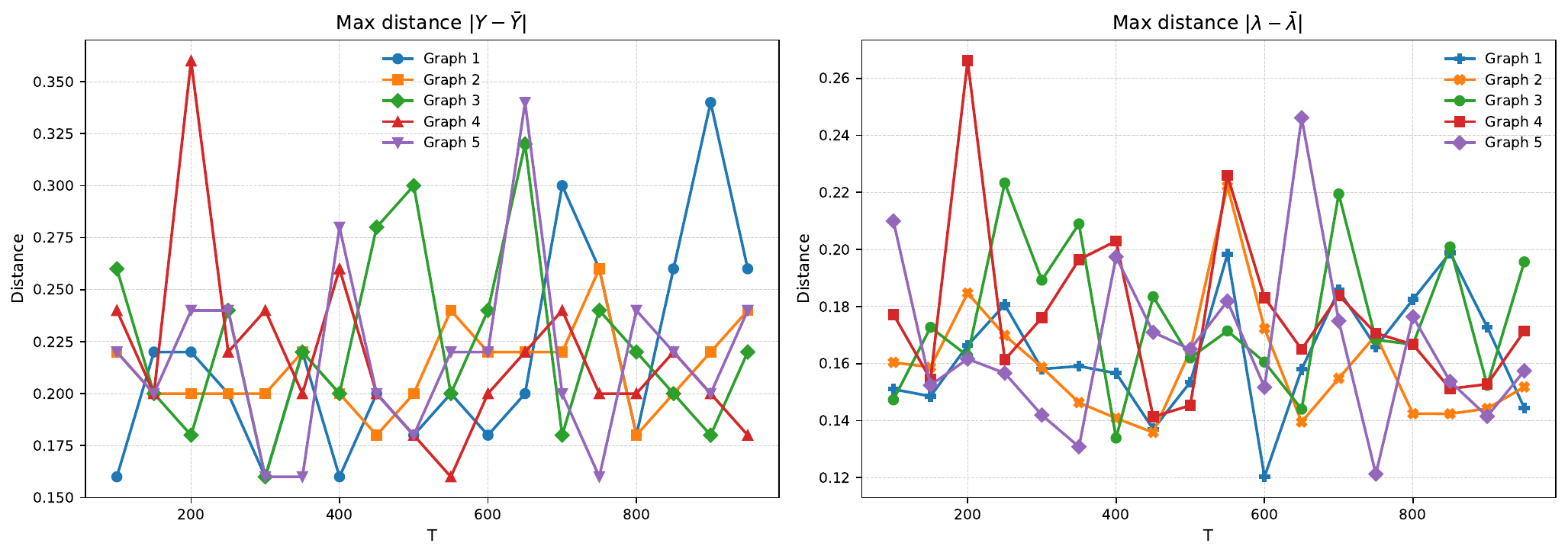}
     \caption{Maximum distances as a function of the time horizon $T$, for five graph realisations of size $N=500$. The left panel shows the max distance $|Y-\bar{Y}|$ between the true and mean-field processes, and the right panel shows the max distance $|\lambda-\bar{\lambda}|$.}
     \label{fig:appendix-max-distances-linear-t}
 \end{figure}

\end{document}